\documentclass [twoside,reqno,12pt] {amsart}
\usepackage[left=1in,right=1in,top=1in,bottom=1in]{geometry}

\usepackage[hidelinks]{hyperref}
\usepackage{amsfonts}
\usepackage{amssymb}

\usepackage{color}
\usepackage{graphics}
\usepackage{comment}

\newtheorem{thm}{Theorem}[section]

\newtheorem{lem}[thm]{Lemma}
\newtheorem{prop}[thm]{Proposition}

\newtheorem{defn}[thm]{Definition}
\newtheorem{rem}[thm]{Remark}

\newtheorem{assumption}{Assumption}

\theoremstyle{definition}

\numberwithin{equation}{section}

\renewcommand{\Re}{\mathrm{Re}}
\renewcommand{\Im}{\mathrm{Im}}

\newcommand{\C}{\mathbb{C}}

\renewcommand{\div}{\operatorname{div}}

\newcommand{\n}[1]{\langle #1 \rangle}

\newcommand{\R}{\mathbb{R}}

\newcommand{\pM}{{\partial M}}
\newcommand{\tr}{\hbox{tr} }
\newcommand\scl{\mathrm{scl}}

\def\hat{\widehat}
\def\tilde{\widetilde}
\def \bfo {\begin {eqnarray*} }
\def \efo {\end {eqnarray*} }
\def \ba {\begin {eqnarray*} }
\def \ea {\end {eqnarray*} }
\def \beq {\begin {eqnarray}}
\def \eeq {\end {eqnarray}}
\def \supp {\hbox{supp }}

\def \det {\hbox{det}}

\def \p {\partial}

\newcommand{\cO}{\mathcal{O}}
\newcommand{\cF}{\mathcal{F}}

\newcommand{\openbdy}{(0, T)\times \p M}

\newcommand{\conf}{c^{-\frac{n-2}{4}}}

\def\hat{\widehat}
\def\tilde{\widetilde}
\def \bfo {\begin {eqnarray*} }
\def \efo {\end {eqnarray*} }
\def \ba {\begin {eqnarray*} }
\def \ea {\end {eqnarray*} }
\def \beq {\begin {eqnarray}}
\def \eeq {\end {eqnarray}}
\def \supp {\hbox{supp }}

\def \det {\hbox{det}}

\def \p {\partial}

\title[Inverse problem for the convection-diffusion equation]{Recovery of time-dependent coefficients for the convection-diffusion equation on conformally transversally anisotropic manifolds from partial data}

\author[Liu]{Boya Liu}
\address{B. Liu, Department of Mathematics\\
North Dakota State University, Fargo\\
ND 58102, USA}
\email{boya.liu@ndsu.edu}

\author[Purohit]{Anamika Purohit}
\address{A. Purohit, Department of Mathematics\\
Indian Institute of Technology Bombay, Maharashtra, India}
\email{30007031@iitb.ac.in}

\begin{document}

\begin{abstract}
We study an inverse problem of recovering a time-dependent convection term and density coefficient of the convection-diffusion equation from partial data on a certain type of compact Riemannian manifold of dimension at least three. We prove that the knowledge of a partial input-output operator determines both coefficients uniquely up to a natural gauge. Our geometric setting is conformally transversally anisotropic manifolds, that is, compact Riemannian manifolds with boundary that are conformally embedded into a product of the Euclidean line and a transversal manifold. Additionally, we assume that the attenuated geodesic ray transforms of one-forms and functions are both injective on the transversal manifold.
\end{abstract}

\maketitle

\section{Introduction and Statement of Results}
\label{sec:intro}

Let $(M, g)$ be a smooth, compact, oriented Riemannian manifold of dimension $n \ge 3$ with smooth boundary $\p M$. We shall adopt the following notations throughout this paper: for any $0<T<\infty$,  we denote
$Q =(0,T)\times M^{\mathrm{int}}$ the spacetime, $\overline{Q}$ the closure of $Q$, and $\Sigma = \openbdy$ the lateral boundary of $Q$.

Let $A(t,x)=A_j(t,x)dx^j$ be a one-form with complex-valued coefficients, and let $q(t,x)$ be a complex-valued function. In what follows, we shall denote $d:C^\infty(Q)\to C^\infty(Q, T^\ast Q)$ to be the exterior derivative.
Its  $L^2$-adjoint $d^\ast$ is given by the following formula in local coordinates:
\begin{equation}
\label{eq:dstarv}
d^\ast v=-|g|^{-1/2}\p_{x_j}(|g|^{1/2}g^{jk}v_k),
\end{equation}
where $|g|=\det(g_{jk})$ and $v=v_jdx^j$.

Consider the convection-diffusion operator
\begin{equation}
\label{eq:convection_diffusion}
\begin{aligned}
\mathcal{L}_{c,g, A, q}
&=
c(x)^{-1} \p_t-\Delta_g -\langle A, d\cdot\rangle_g  + d^\ast (A\cdot) - \langle A, A \rangle_g+q,
\end{aligned}
\end{equation}
with time-dependent coefficients $A(t,x)$, called the \textit{convection term}, and $q(t,x)$, called the \textit{density coefficient}.  Here the Laplace-Beltrami operator $\Delta_g$ of the metric $g$ is given by the following expression   in local coordinates:
\[
\Delta_g v=|g|^{-1/2}\p_{x_j}\left(g^{jk}|g|^{1/2}\p_{x_k}v\right),
\]
where  $g^{jk}$ denotes the inverse of $g_{jk}$. Furthermore, $c(x)$ is a smooth and strictly positive function on $M$.  The convection-diffusion operator has extensive applications in  the transfer of mass or heat of different physical quantities, such as particles and energy, inside a physical system, due to convection and diffusion \cite{Stocker}. It also describes the velocity of a particle or price evolution \cite{Caro_Kian}, and  in chemical engineering \cite{Sahoo_Vashith}.

Motivated by these applications, in this paper we investigate the inverse problem of recovering the coefficients $A(t,x)$ and $q(t,x)$ on a class of Riemannian manifolds from partial boundary measurements. To formulate our main results, we shall make two geometric assumptions, the first of which is  that the Riemannian manifold $(M,g)$ is \textit{conformally transversally anisotropic}, defined as follows.
\begin{defn}
\label{def:CTA_manifolds}
A Riemannian manifold $(M,g)$ of dimension $n\ge 3$ with boundary $\p M$ is called conformally transversally anisotropic (CTA) if $M$ is a compact subset of a manifold $\R\times M_0^{\text{int}}$
with smooth boundary and nonempty interior, and $g= c(e \oplus g_0)$. Here $(\R,e)$ is the real line, $(M_0,g_0)$ is a smooth compact $(n-1)$-dimensional Riemannian manifold with smooth boundary, called the transversal manifold, and $c\in C^\infty(\R\times M_0)$ is a strictly positive function.
\end{defn}

Examples of CTA manifolds include precompact smooth proper subsets of Euclidean, spherical, and hyperbolic spaces, see  \cite{Ferreira_Kur_Las_Salo} for more examples of CTA manifolds.
We work on CTA manifolds because they admit a natural product structure, together with a limiting Carleman weight, both of which play a fundamental role in our analysis.
Indeed, we may express every point $x\in M$ as $x=(x_1,x')$, where $x_1 \in \R$ and $x'\in M_0$.
In particular, the projection $\varphi(x) = x_1$ is a limiting Carleman weight. The existence of such a function is equivalent to the existence of a parallel unit vector field for a conformal multiple of the metric $g$, and the converse holds when $M$ is simply connected, see \cite[Theorem 1.2]{Ferreira_Kenig_Salo_Uhlmann}. Locally, this condition is equivalent to   $(M,g)$ being conformal to the product of an interval and a Riemannian manifold $(M_0,g_0)$ of dimension $n-1$.

Consider the following initial boundary value problem:
\begin{equation}
\label{eq:ibvp}
\begin{cases}
\mathcal{L}_{c,g,A,q}u(t,x)=0 & \text{ in } Q,
\\
u(0,\cdot)=f_1(x) &\text{ in } M,
\\
u(t,x) = f_2(t,x)  &\text{ on } \Sigma.
\end{cases}
\end{equation}
If $(f_1, f_2) \in \mathcal{K}_0$, where the set $\mathcal{K}_0$ is given by
\[
\mathcal{K}_0 := \{(f|_{t=0}, f|_{\Sigma}): f\in H^1(0,T;H^{-1}(M)) \cap L^2(0,T;H^1(M))\},
\]
the problem \eqref{eq:ibvp} admits a unique solution $u \in H^1(0,T;H^{-1}(M)) \cap L^2(0,T;H^1(M))$, see for instance \cite{Caro_Kian,Evans,lionsNonHomogeneousBoundaryValue1972}.

We now introduce the boundary measurements considered in this paper. Let $\nu$ be the outward unit normal vector to $\pM$ with respect to the metric $g$. Throughout this paper, we set
$\p M_\pm = \{x \in \p M: \pm \p_\nu\varphi(x) \ge 0\}$
and denote $\Sigma_\pm=(0, T)\times \p M_\pm^{\text{int}}$ to be the corresponding subsets of the lateral boundary.
Furthermore, let $U',V'\subset \p M$ be neighborhoods of $\p M_+$, $\p M_-$, respectively. We also set $U=(0, T)\times U'$ and $V=(0,T)\times V'$. With these notations, we define the partial input-output operator by the formula
\begin{equation}
\label{eq:bdy_measurement}
\Lambda_{A,q}(f_1, f_2) = (u|_{t=T}, (\p_\nu + 2\n{A,\nu}_g)u|_{V}).
\end{equation}
The inverse problem considered in this paper is to uniquely determine the convection term $A(t,x)$ and the density coefficient $q(t,x)$ in the space-time $Q$ from $\Lambda_{A,q}$. However, due to  the gauge invariance of $\Lambda_{A,q}$, there is an obstruction  to the unique recovery of $A$ and $q$. More precisely, it is shown in \cite[Proposition 1.3]{Mishra_Purohit_Vashisth} that for any  function  $\Psi (t,x) \in W_0^{2,\infty}(Q)$, we have
\begin{equation}
\Lambda_{A,q} = \Lambda_{A-d \Psi,q-\p_t \Psi}.
\end{equation}
Therefore, one can only hope to recover the coefficients up to the gauge
\begin{equation}
\label{eq:gauge}
A^{(2)}=A^{(1)}-d \Psi, \quad q_2=q_1-\p_t \Psi.
\end{equation}
Moreover, it is worth emphasizing that measurements at $t=0$ and $t=T$ are not required for the global recovery of $A(t,x)$ and $q(t,x)$ in the Euclidean setting, which has been achieved in \cite{Sahoo_Vashith}. However,  such measurements are still needed in the setting of Riemannian manifolds. We refer readers to   \cite[Remark 1.5(1)]{Mishra_Purohit_Vashisth} for detailed discussions.

In addition to the assumption that $(M,g)$ is a CTA manifold, we need to assume that a certain geodesic ray transform is injective on the transversal manifold. Similar assumptions have been  adopted in the study of inverse problems on CTA manifolds, see for instance \cite{Cekic,Ferreira_Kur_Las_Salo,Krupchyk_Uhlmann_magschr,Liu_Saksala_Yan,Liu_Saksala_Yan_potential,Yan} and the references therein.

To proceed, let us introduce some definitions related to geodesic ray transforms on Riemannian manifolds with boundary.
We parametrize geodesics on $(M_0,g_0)$  by points on the unit sphere bundle $SM_0 = \{(x, \xi) \in TM_0: |\xi|=1\}$. Let
\[
\p_\pm SM_0 = \{(x, \xi) \in SM_0: x \in \p M_0, \pm \langle \xi, \nu(x) \rangle > 0\}
\]
be  the incoming (--) and outgoing (+) boundaries of $SM_0$, where $\langle \cdot, \cdot \rangle$ denotes  the Riemannian inner product of $(M_0,g_0)$.

Let $(x, \xi) \in \p_-SM_0$, and let $\gamma = \gamma_{x, \xi}(\tau)$ denote the unit speed geodesic in $M_0$ such that $\gamma(0) = x$ and $\dot{\gamma} (0) = \xi$. We use the notation  $\tau_{\mathrm{exit}}(x, \xi)>0$ to stand for the exit time of $\gamma$  and write $\tau_{\mathrm{exit}}(x, \xi) = +\infty$ if $\gamma$ does not exit $M_0$. Then  a unit speed geodesic segment $\gamma: [0, \tau_{\mathrm{exit}}(x, \xi)] \to M_0$, $0<\tau_{\mathrm{exit}}(x, \xi)<\infty$, is called \textit{non-tangential} if $\gamma(0)$,  $\gamma(\tau_{\mathrm{exit}}(x, \xi)) \in \pM_0$, the vectors $\dot{\gamma}(0)$ and $\dot{\gamma}(\tau_{\mathrm{exit}}(x, \xi))$ are transversal to $\p M_0$, and $\gamma(\tau)\in M_0^\mathrm{int}$ for all $\tau \in (0,\tau_{\mathrm{exit}}(x, \xi))$.

Suppose that the function $f\in C(M_0, \C)$, and the one-form $F\in C(M_0, T^\ast M_0)$. For a function $\alpha\in C^\infty(M_0)$ and every $(x, \xi) \in \p_-SM_0 \setminus \Gamma_-$, where $\Gamma_- = \{(x, \xi) \in \p_-SM_0: \tau_{\mathrm{exit}}(x, \xi)= +\infty\}$, we define the attenuated geodesic ray transform by
\begin{equation}
\label{eq:geo_trans_def}
I^\alpha(f, F)(x, \xi) = \int_0^{\tau_{\mathrm{exit}}(x, \xi)} [f(\gamma_{x, \xi}(t))+\n{F(\gamma_{x, \xi}(t)), \dot {\gamma}_{x, \xi}(t)}] \exp\left[\int_{0}^{t}\alpha(\gamma_{x, \xi}(s))ds\right] dt.
\end{equation}
It was established in \cite[Theorem 7.1]{Ferreira_Kenig_Salo_Uhlmann} that $I^\alpha$ is injective on simple manifolds.
Let us recall that a compact, simply connected Riemannian manifold with smooth boundary is said to be \textit{simple} if it has no conjugate points, and its boundary is strictly convex. We also remark that $I^\alpha$ is known to be injective on certain classes of non-simple manifolds, see \cite{deHoop_Ilmavirta,paternain2019geodesic}, among others.

Our second geometric assumption is as follows.
\begin{assumption}
\label{asu:inj}
There exists $\varepsilon>0$ such that for every function $\alpha\in C^\infty(M_0)$ satisfying $\|\alpha\|_{L^\infty(M_0)}<\varepsilon$,  if $I^\alpha(f, F)(x, \xi)=0$ for all $(x, \xi) \in \p_-SM_0 \setminus \Gamma_-$ such that $\gamma_{x, \xi}$ is a non-tangential  geodesic, then $f=\alpha p $ and  $F=dp$ in $M_0$ for some function $p \in C^1(M_0)$ with $p|_{\p M_0}=0$.
\end{assumption}

We are now ready to state the main result of this paper.
\begin{thm}
\label{thm:uniqueness}
Let  $(M,g)$ be a CTA manifold of dimension $n\ge 3$ with connected boundary $\p M$ such that Assumption \ref{asu:inj} holds for the transversal manifold $(M_0,g_0)$. For any $T>0$, let $Q=(0,T)\times M$. Assume  that  $A^{(i)}\in W^{1,\infty}(Q, T^\ast Q)$, $q_i \in C(\overline{Q})$, $i=1,2$, and that  $A^{(1)}=A^{(2)}$ and $q_1=q_2$ on $\p Q$. Then there exists a function $\Psi \in W^{2,\infty}_0(Q)$ such that
$\Lambda_{A^{(1)},q_1}=  \Lambda_{A^{(2)},q_2}$ implies  $A^{(2)}=A^{(1)}-d\Psi$  and $q_2=q_1-\p_t \Psi$.
\end{thm}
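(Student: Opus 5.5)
We will follow the standard Carleman-estimate/CGO strategy for partial data on CTA manifolds, adapted to the parabolic setting as in \cite{Mishra_Purohit_Vashisth}. Write $u_1$ for a solution of $\mathcal{L}_{c,g,A^{(1)},q_1}u_1=0$ and $v$ for a solution of the formal adjoint problem for $(A^{(2)},q_2)$ with final condition $v(T,\cdot)=0$. We extend $A^{(i)},q_i$ to a larger cylinder using the boundary agreement $A^{(1)}=A^{(2)}$, $q_1=q_2$ on $\p Q$. Set $A=A^{(1)}-A^{(2)}$ and $q=q_1-q_2$, where $q$ is modified by the quadratic terms $\n{A^{(i)},A^{(i)}}_g$ as dictated by \eqref{eq:convection_diffusion}. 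Equality of the input-output operators, combined with a Carleman estimate with boundary terms for the parabolic operator with weight $\pm\varphi=\pm x_1$, will control the boundary integrals on $\Sigma\setminus V$ (the region near $\p M_+$, where the normal derivative is not measured). As $h\to0$ this yields the integral identity
\[
\int_Q \big(2\n{A,du_1}_g\,\overline{v}+(\text{zeroth order in }A,q)\,u_1\overline{v}\big)\,dV\,dt = 0 .
\]
Step one is therefore to prove a boundary Carleman estimate for $c^{-1}\p_t-\Delta_g$ plus lower-order terms, with the limiting weight $\varphi$. Because $c$ is time independent, we reduce to the CTA metric $e\oplus g_0$ by the conformal change $u\mapsto c^{(n-2)/4}u$. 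The time derivative is lower order relative to $h^{-2}$, so the elliptic estimate of \cite{Ferreira_Kenig_Salo_Uhlmann} transfers with a $\p_t$ term absorbed.

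Step two constructs complex geometrical optics solutions by the Hahn--Banach/Carleman duality:
\[
u_1=e^{-\frac{1}{h}(x_1+ir)}\big(a_1(t,x)+r_1\big),\qquad v=e^{\frac{1}{h}(x_1+ir)}\big(a_2+r_2\big),
\]
with $\|r_j\|=O(h^{1/2})$, where $r$ is the distance along a non-tangential geodesic $\gamma$ in $M_0$. Choosing phase $\psi=x_1+ir$ and letting $\lambda$ be a spectral parameter in $x_1$, we take the amplitudes to be Gaussian beams concentrated near $\gamma$ (when $M_0$ is not simple we will need Gaussian-beam quasimodes, as in \cite{Ferreira_Kur_Las_Salo}). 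These amplitudes solve transport equations in which $A$ enters through $\n{A,d\psi}$. The amplitude of $u_1$ therefore carries an integrating factor of the form $\exp\big(\int (A_1+iA_r)\big)$ and a time profile $\eta(t)$.

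Step three takes $h\to0$ in the integral identity. The leading $h^{-1}$ term gives
\[
\int_0^T\!\!\int_{\R}\eta(t)e^{-2\lambda x_1}\int_\gamma \n{A,\dot\gamma+\p_{x_1}}\exp(\cdots)\,d\tau\,dx_1\,dt=0 .
\]
Taking a Fourier transform in $x_1$ (and localizing in $t$ via $\eta$) turns this into an attenuated transform $I^{\alpha}$ of the pair ($\hat A_1$ component, transversal part of $\hat A$) with a small attenuation $\alpha$ proportional to $\lambda$. For $|\lambda|$ small, Assumption \ref{asu:inj} gives that $\hat{A}_1=\alpha p$ and $\hat{A}'=dp$. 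Analyticity in $\lambda$ of the Fourier transform, since $A$ is compactly supported in $x_1$, then yields $dA=0$ in the spatial variables for each $t$. Because $\p M$ is connected and $A$ vanishes on the boundary, the Poincaré lemma with Dirichlet condition gives $A=d_x\Phi$ with $\Phi(t,\cdot)|_{\p M}=0$, and $\Phi\in W^{2,\infty}$ from the regularity of $A$. We then set $\Psi=\Phi$, using the time-dependent gauge \eqref{eq:gauge} and $\Lambda_{A,q}=\Lambda_{A-d\Psi,q-\p_t\Psi}$. After replacing $A^{(1)}$ by $A^{(1)}-d\Psi$ we have $A^{(1)}=A^{(2)}$, and the remaining identity is $\int_Q q\,u_1\overline v=0$. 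Repeating the limit with the same CGOs now produces $I^{\alpha}(\hat q,0)=0$, so the same injectivity and analyticity argument forces $q=0$. This gives $q_2=q_1-\p_t\Psi$.

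The main obstacle is step one, namely obtaining a boundary Carleman estimate that handles both $\p M_+$ (where the data is unmeasured) and the time-slice terms at $t=0,T$. The initial and final data must be used: $u_1$ is chosen with arbitrary $f_1$, and the difference solution vanishes at $t=T$ by equality of $u|_{t=T}$. Alongside this we must keep $A$ only $W^{1,\infty}$, which requires mollifying $A$ in the amplitude construction with an $h$-dependent scale, and we must control the errors $O(h^{\sigma})$ coming from the first-order term $\n{A,du_1}$, which is of size $h^{-1}$.
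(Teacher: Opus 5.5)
There is a genuine gap, and it sits in your Step one, which you rightly call the crux but justify incorrectly. After conjugating with $e^{\pm x_1/h}$ and multiplying by $h^2$, the time derivative becomes $h^2\p_t$. The elliptic estimate of \cite{Ferreira_Kenig_Salo_Uhlmann} only controls $h\|u\|_{L^2}+h\|h\nabla_x u\|_{L^2}$, and $\|h^2\p_t u\|$ is not bounded by these quantities. So the time derivative cannot be absorbed as a lower-order perturbation: the prefactor $h^2$ gives no smallness, because the estimate controls no $t$-derivative of $u$.

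A purely spatial weight can be handled, but only by a structural argument. Write the conjugated spatial operator as $\mathcal{A}+i\mathcal{B}$, with $\mathcal{A}=-h^2\Delta_g-|\nabla\varphi|_g^2$ and $\mathcal{B}$ formally self-adjoint, both time-independent. Since $h^2\p_t$ is skew-adjoint and commutes with $\mathcal{A}$, one gets $\|(\mathcal{A}+i\mathcal{B}+h^2\p_t)u\|^2=\|\mathcal{A}u\|^2+\|(\mathcal{B}-ih^2\p_t)u\|^2+i\n{[\mathcal{A},\mathcal{B}]u,u}+(\text{terms on }\Sigma)+h^2\big[\n{\mathcal{A}u(t),u(t)}_{L^2(M)}\big]_{t=0}^{t=T}$. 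The last term has no sign. It must be killed by $u(0)=u(T)=0$, or by compact support in $t$ for the interior estimate. You also need that interior estimate in the $H^{-1}_{\scl}$-shifted form to obtain $H^1_{\scl}$ remainders. All of this is a new estimate you would have to prove.

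The paper avoids it by using the space--time weight $\varphi=\beta^2t/h^2+x_1/h$, for which Propositions \ref{prop:bdy_Carleman} and \ref{prop:interior_Carleman} are available from \cite{Mishra_Purohit_Vashisth}. That choice explains three features of its proof. Its quasimodes solve $|\nabla_{g_0}\psi|^2=1-\beta^2$. It must keep $s$ real, since a complex $s=\frac1h+i\lambda$ would put $e^{4i\lambda\beta^2t/h}$ into $u_2\overline{u}_1$. And it produces the attenuation through the holomorphic factor $\eta=e^{-i\sqrt{1-\beta^2}\mu(x_1+ir)}\eta_2(t)$ instead. In your time-independent scheme a complex spectral parameter would be harmless, which is the real advantage of your route, but only once Step one is proved correctly.

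Steps two and three contain further errors that, as written, destroy the limit.

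\emph{Phase signs.} With $u_1=e^{-(x_1+ir)/h}(\cdots)$ and $v=e^{(x_1+ir)/h}(\cdots)$, the sesquilinear product is $u_1\overline{v}=e^{-2ir/h}(\cdots)$, which tends weakly to zero. The imaginary parts of the two phases must have the same sign, e.g.\ $v=e^{(x_1-ir)/h}(\cdots)$. In Gaussian-beam form this means both quasimodes carry the same $e^{is\psi}$, as in the paper, so that $\overline{v}_sw_s$ carries $e^{-2\Im\psi/h}$. Also drop the requirement $v(T,\cdot)=0$: a CGO cannot satisfy it, and it is not needed, since the difference solution vanishes at $t=0$ and $t=T$.

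\emph{The integrating factor.} The leading term involves $A_1+iA_r$, not $\n{A,\dot\gamma+\p_{x_1}}$, multiplied by the nonlocal factor $e^{\overline{\Phi_1}+\Phi_2}$. A Fourier transform in $x_1$ does not remove this factor. You must use $\overline{\p}(\overline{\Phi_1}+\Phi_2)=A_1+iA_r$ to write the integrand as $\overline{\p}(e^{\overline{\Phi_1}+\Phi_2})\eta$ with $\overline{\p}\eta=0$. Then the Stokes/holomorphic-extension argument of \cite{Krupchyk_Uhlmann_magschr} gives $\int(A_1+iA_r)\eta=0$, and only at that point does Assumption \ref{asu:inj} apply.

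\emph{The gauge.} A closed one-form vanishing on a connected boundary is not exact by any general Poincar\'e lemma, since $M_0$ need not be simply connected. Exactness here comes from the product structure. The paper proves it directly: set $\Psi=\int_{-a}^{x_1}A_1\,dy_1$, use $\int_\R A_1\,dx_1=f(\cdot,0)=0$ and $F=-\frac{i}{\sqrt{1-\beta^2}}d_{x'}p$, and apply Paley--Wiener analyticity to get $d_{x'}\Psi=\sum_{j\ge2}A_j\,dx^j$. You also need $\Psi(0,\cdot)=\Psi(T,\cdot)=0$, which follows from $A=0$ at $t=0,T$, for the gauge invariance $\Lambda_{A,q}=\Lambda_{A-d\Psi,q-\p_t\Psi}$ to apply.
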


\begin{rem}
The assumption that $\p M$ is connected is used only to normalize the gauge function $\Psi$ in the recovery of the convection term. Indeed, we shall first prove that $A^{(1)}-A^{(2)}=d \Psi$ in $Q$. Since $A^{(1)}=A^{(2)}$ on $\p Q$, it follows immediately that $d \Psi=0$ on $\p Q$. Thus, for almost every $t\in (0,T)$, the function $\Psi(t,\cdot)$ is constant on each connected component of $\p M$. In particular, this constant is unique when $\p M$ is connected. After subtracting a time-dependent constant, we may assume that $\Psi=0$ on $\Sigma$.
\end{rem}

To the best of our knowledge, Theorem \ref{thm:uniqueness} is the first partial data uniqueness result for the time-dependent coefficients of the convection-diffusion operator on CTA manifolds with possibly non-simple transversal manifolds. In particular, it generalizes the result of \cite{Choulli_Kian}, in which only the recovery of the density coefficient $q(t,x)$ was established. From a geometric perspective, Theorem \ref{thm:uniqueness}   extends the partial data uniqueness result of \cite{Mishra_Purohit_Vashisth}, where the transversal manifold is assumed to be simple, to a broader geometric setting.

\subsection{Previous literature}

Similar to their hyperbolic counterparts, inverse problems for parabolic operators can be broadly divided into two categories according to whether the coefficients are time-independent or time-dependent. We first survey some results concerning time-independent coefficients. In  \cite{Isakov_91}, the uniqueness of   $q(x)$ from the corresponding Dirichlet-to-Neumann map was established by proving the density of products of solutions in some Lebesgue spaces, and the corresponding stability estimate was derived in \cite{Choulli_book}. Meanwhile, the same uniqueness result as in \cite{Isakov_91} was obtained in \cite{Avdonin_Seidman} by using the Boundary Control (BC) method.
Furthermore, a uniqueness result for the leading order term and the density coefficient from the corresponding Dirichlet-to-Neumann map was established in \cite{Canuto_Kavian}.
When the convection term $A(x)$ is also present, the author of \cite{Deng_Yu_Yang} showed that the data measured at the final time uniquely determines $A(x)$ in one dimension. In dimension two, assuming that  $q(x)=0$, uniqueness of $A(x)$ from a single boundary measurement was obtained in \cite{Cheng_Yamamoto}. In addition, both  uniqueness and logarithmic type stability estimates for $A(x)$ and $q(x)$ were established in \cite{Bellassoued_Rassas} in dimension three or higher.

Turning our attention to inverse problems involving time-dependent coefficients of the convection-diffusion equation, a stability result for the density coefficient $q(t,x)$ from full boundary data was established in a cylindrical domain in \cite{Gaitan_Kian}, and  a partial data result for $q(t,x)$ was obtained in \cite{Choulli_Kian}. Furthermore, the author of \cite{Feizmohammadi_heat} showed that the Dirichlet-to-Neumann map determines both the leading order term and the density coefficient. On the other hand, with the convection term $A(t,x)$, the unique recovery of $A(t,x)$ and $q(t,x)$ up to a natural gauge from the Dirichlet-to-Neumann map was obtained in \cite{Sahoo_Vashith} under a  smallness assumption on $A(t,x)$. This assumption was subsequently removed in the stability result \cite{Senapati_Vashisth}.  We also refer readers to additional partial data uniqueness results \cite{Kumar_Purohit,Purohit}, as well as stability estimates \cite{Bellassoued_Fraj,Bellassoued_Rassas}, which concern partial data settings different from the one considered in this paper. Let us note that  all of the aforementioned results were established in the Euclidean setting. In the Riemannian setting, to the best of our knowledge, the only uniqueness result concerning the  recovery of both $A(t,x)$ and $q(t,x)$ was obtained in \cite{Mishra_Purohit_Vashisth} for CTA manifolds with simple transversal manifolds.

\subsection{Main ideas of the proof}

The central step of the  proof is the construction of complex geometric optics (CGO) solutions to the equation $\mathcal{L}_{c ,g, A, q}u=0$ of the form
\[
u(t, x)=e^{\pm s^2 \beta^2 t \pm sx_1}(v_s(t,x)+r_s(t,x)), \quad (t, x)\in Q.
\]
Here $h\in (0,1)$ is a semiclassical parameter, $s=\frac{1}{h}$, $\beta\in (\frac{1}{\sqrt{3}}, 1)$ is a fixed parameter, $v_s$ is a Gaussian beam quasimode, and $r_s$ is a correction term that vanishes in a suitable sense as  $h \to 0$.  The spatial component $\pm x_1$ is a limiting Carleman weight, while $\pm s^2\beta^2t\pm sx_1$ is the associated $s$-dependent space-time phase used in the construction of CGO solutions.
The limiting Carleman weight is used to derive the necessary boundary and interior Carleman estimates, which were originally established in \cite{Mishra_Purohit_Vashisth}, see Proposition \ref{prop:bdy_Carleman} and Proposition \ref{prop:interior_Carleman}, respectively.
Specifically, the boundary Carleman estimate is used to control the solutions on the inaccessible part of the lateral boundary, whereas the purpose of the interior Carleman estimate is to establish the solvability result in Lemma \ref{lem:solvability}. This solvability result is subsequently used to prove the existence of the correction term $r_s$ in Theorem \ref{thm:existence_remainder}.
Let us remark that the Dirichlet data is prescribed  on the full lateral boundary in this paper, which is consistent with earlier works involving the convection term, see for instance  \cite{Mishra_Purohit_Vashisth,Sahoo_Vashith}, among others. To the best of our knowledge, recovering the coefficients $A$ and $q$ from the input-output operator defined in  \eqref{eq:bdy_measurement}, where the Dirichlet data is supported only on  an open subset of the lateral boundary, remains a challenging open problem.

We next discuss the construction of the quasimode $v_s$. Since the transversal manifold $(M_0,g_0)$ is not necessarily simple,  it is not possible to construct $v_s$ based on global geometric optics solutions. Instead, we construct Gaussian beam quasimodes for every non-tangential  geodesic in the transversal manifold $M_0$ in Theorem \ref{prop:Gaussian_beam} by using techniques from inverse problems for elliptic and hyperbolic operators, see for instance \cite{Cekic,Ferreira_Kur_Las_Salo,Krupchyk_Uhlmann_magschr,Liu_Saksala_Yan}. We then derive a concentration property for the quasimodes, which is given in
Theorem \ref{prop:limit_behavior}.  The construction of CGO solutions is completed in Theorem \ref{thm:existence_remainder}, where we establish the existence of the remainder term, as well as a suitable decay rate.

Compared to the aforementioned earlier works, a key distinction in the construction presented in this paper is that we take the parameter $s$ to be real. Indeed, the use of a complex parameter $s$ produces additional oscillatory factors once the CGO solutions are substituted into the  integral identity, and the limits of these terms cannot be directly determined as $h \to 0$. Our choice of $s$ eliminates these terms and allows us to pass to the limit in the subsequent analysis.

When the CGO solutions are substituted into the integral identity \eqref{eq:int_identity}, we first utilize the boundary Carleman estimate in Proposition \ref{prop:bdy_Carleman} to show that  the right-hand side of \eqref{eq:int_identity}
vanishes as $h\to 0$. On the other hand, thanks to the decay rate of the remainder terms \eqref{eq:est_remainders}, together with  the concentration property established in Theorem \ref{prop:limit_behavior},  the left-hand side of \eqref{eq:int_identity} converges to the attenuated geodesic ray transform of the Fourier transform of the convection term in the $x_1$-variable in the transversal manifold $(M_0,g_0)$ for almost every $t\in (0,T)$. We then utilize  Assumption \ref{asu:inj} to invert the attenuated geodesic ray transform and show that the spatial differential of the difference of the convection terms vanishes in $Q$. Afterwards, we show the existence of the function $\Psi(t,x)$ satisfying the first identity in \eqref{eq:gauge}. The proof is completed in two steps. We first provide a proof for the uniqueness result for the convection term $A(t,x)$, followed by showing the uniqueness for the density coefficient $q(t,x)$ up to the gauge.

This paper is organized as follows. We begin by stating the required boundary and interior Carleman estimates in Section \ref{sec:Carleman_estimate}. In Section \ref{sec:CGO_solution}, we construct the CGO solutions based on Gaussian beam quasimodes. Finally, we establish Theorem \ref{thm:uniqueness} in Section \ref{sec:proof}.

\subsection*{Acknowledgments}
The research of B.L. is partially supported by the  Simons Foundation Travel Support for Mathematicians (MPS-TSM-00013766). A.P. gratefully acknowledges the financial support provided by their project supervisor, Dr. Suman Kumar Sahoo, through the Sponsored Project ``Inverse Problems" (Project Code No. RD/0524-IRCCSH0-021), undertaken in the Department of Mathematics, Indian Institute of Technology Bombay.

\section{Carleman Estimates}
\label{sec:Carleman_estimate}

In this short section, we collect the  Carleman estimates that will be used for the construction of CGO solutions and the proof of Theorem \ref{thm:uniqueness}. These include both boundary and interior Carleman estimates, together with their immediate applications. The boundary Carleman estimate stated in Proposition  \ref{prop:bdy_Carleman} will be used to control boundary terms on portions of $\p Q$ where measurements are unavailable, while the interior estimate in Proposition \ref{prop:interior_Carleman} will be applied in Section \ref{sec:CGO_solution} to construct the remainder term in the CGO solutions. These results were originally established in the Euclidean setting in \cite{Sahoo_Vashith} and extended to Riemannian manifolds in \cite{Mishra_Purohit_Vashisth}. We state them for the convenience of readers.

Let $(M, g)$ be a CTA manifold as in Definition \ref{def:CTA_manifolds}, and let us denote $\tilde{g}=e\oplus g_0$. Since the conformal factor $c>0$ is time-independent, by  the conformal properties of the Laplace-Beltrami operator  \cite[Section 2]{Ferreira_Kur_Las_Salo}, we get
\begin{equation}
\label{eq:conformal_equivalence}
c^{\frac{n+2}{4}} (c^{-1}\p_t -\Delta_g)  (\conf u)= \p_t u -\Delta_{\tilde{g}}u- \big(c^{\frac{n+2}{4}}\Delta_g(\conf)\big)u.
\end{equation}
We also recall from \cite[Section 6]{Krupchyk_Uhlmann_magschr} that
\begin{equation}
\label{eq:conformal_first_order}
c^{\frac{n+2}{4}} \n{A,d(\conf u)}_g
=
\n{A,du}_{\tilde g}-\left(\frac{n-2}{4}c^{-1}\n{A,dc}_{\tilde{g}}\right) u,
\end{equation}
\begin{equation}
\label{eq:conformal_divergence}
d^\ast(A\conf u) = d^\ast_{\tilde g}(Au) - \left(\frac{n-2}{4}c^{-1}\n{A,dc}_{\tilde{g}}\right) u,
\end{equation}
and
\begin{equation}
\label{eq:conformal_zeroth_order}
c^{\frac{n+2}{4}} (\n{A,A}_g^2+q)(\conf u)
=
(\n{A,A}_{\tilde g}^2+cq)u.
\end{equation}
Hence, it follows from \eqref{eq:conformal_equivalence}--\eqref{eq:conformal_zeroth_order} that
\begin{equation}
\label{eq:equivalence_operator}
c^{\frac{n+2}{4}}\circ \mathcal{L}_{c,g,A,q} \circ \conf = \mathcal{L}_{\tilde{g}, A, \tilde{q}},
\end{equation}
where
\begin{equation}
\label{eq:equiv_coeff}
\tilde{q}=c(q-c^{\frac{n-2}{4}}\Delta_g(\conf)).
\end{equation}
Thus, by replacing the metric $g$ and the density coefficient $q$ with $\tilde{g}$ and $\tilde{q}$, respectively, we may assume that the conformal factor $c=1$. In what follows, we shall write  $\mathcal{L}_{g,A,q}$ to stand for the convection-diffusion operator $\mathcal{L}_{c,g,A,q}$ when $c=1$.

Let us first state the boundary Carleman estimate. We refer readers to   \cite[Theorem 2.1]{Mishra_Purohit_Vashisth} for its proof.
\begin{prop}
\label{prop:bdy_Carleman}
Let $(M,g)$ be a CTA manifold of dimension $n \ge 3$. Let $\beta \in (\frac{1}{\sqrt{3}},1)$, and let $\varphi(t,x) =  \frac{1}{h^2} \beta^2 t + \frac{1}{h} x_1$. Let $A \in W^{1,\infty}(Q,T^\ast Q)$ and $q\in C(\overline{Q}, \C)$. Then for all functions $u \in C^2(Q)$ such that $u|_{\Sigma} = u|_{t=0}=0$,
the following inequality holds for all $0<h \ll 1$:
\begin{equation}
\label{eq:boundary_Carleman}
\begin{aligned}
&\|e^{-\varphi} h^2\mathcal{L}_{g,A,q} u\|_{L^2(Q)}
+
\cO(h) \|e^{-\varphi(T,\cdot)}\nabla_g u(T,\cdot)\|_{L^2(M)}
+
\cO(h^{3/2}) \left(\int_{\Sigma_-} |\p_\nu \Gamma| |e^{-\varphi} \p_\nu u|^2 dS_g dt
\right)^{1/2}
\\
& \ge
\cO(h) \|e^{-\varphi}u\|_{L^2(Q)}
+
\cO(h^2)\|e^{-\varphi} \nabla_g u\|_{L^2(Q)}
+
\cO(h^2)\|e^{-\varphi(T,\cdot)}\nabla_g u(T,\cdot)\|_{L^2(M)}
\\
& \quad +
\cO(h^{3/2}) \left(\int_{\Sigma_+} \p_\nu \Gamma |e^{-\varphi} \p_\nu u|^2 dS_g dt\right)^{1/2}.
\end{aligned}
\end{equation}
Here $\Gamma(x)=x_1, \Sigma_\pm = (0,T)\times \p M_{\pm}^{\mathrm{int}}$, and $\p M_{\pm} = \{x\in \p M: \pm \p_\nu \varphi(x)\ge 0\}$.
\end{prop}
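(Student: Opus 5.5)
We will prove the boundary Carleman estimate \eqref{eq:boundary_Carleman} by conjugating with the weight and integrating by parts. The approach treats the principal part first and then absorbs the lower order terms. Throughout, by \eqref{eq:equivalence_operator} we may assume $c=1$ and $g=e\oplus g_0$, so that $\Gamma=x_1$ satisfies $\nabla_g\Gamma=\p_{x_1}$, $|\nabla_g\Gamma|=1$ and $\mathrm{Hess}_g\Gamma=0$.

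\textbf{Conjugation.} Set $v=e^{-\varphi}u$, so $v|_\Sigma=v|_{t=0}=0$. We compute
\[
e^{-\varphi}h^2(\p_t-\Delta_g)e^{\varphi}v=h^2\p_t v+\beta^2 v-h^2\Delta_g v-2h\p_{x_1}v-v .
\]
We split this into a symmetric and an antisymmetric part:
\[
P_1 v=-h^2\Delta_g v-(1-\beta^2)v, \qquad P_2 v=h^2\p_t v-2h\p_{x_1}v .
\]
Then
\[
\|P_1v+P_2v\|^2=\|P_1v\|^2+\|P_2v\|^2+2\Re(P_1v,P_2v)_{L^2(Q)} .
\]

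\textbf{Cross term.} We expand $2\Re(P_1v,P_2v)$ by integration by parts, using that $x_1$ is a limiting Carleman weight so that no bulk commutator term appears.

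\emph{Time part.} Pairing $-h^2\Delta_g v$ with $h^2\p_t v$ gives $\tfrac{h^4}{2}\|\nabla_g v(T)\|^2$. Pairing $-(1-\beta^2)v$ with $h^2\p_t v$ gives $-\tfrac{h^2(1-\beta^2)}{2}\|v(T)\|^2$. There are no contributions at $t=0$ or on $\Sigma$.

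\emph{Spatial part.} Pairing $-h^2\Delta_g v$ with $-2h\p_{x_1}v$ produces only the boundary term $-h^3\int_\Sigma \p_\nu\Gamma\,|\p_\nu v|^2$, since $\mathrm{Hess}\,\Gamma=0$ and $v=0$ on $\Sigma$. The pairing of $v$ with $\p_{x_1}v$ vanishes.

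The boundary term splits according to the sign of $\p_\nu\Gamma$ into the $\Sigma_+$ and $\Sigma_-$ integrals, and $\p_\nu u=e^{\varphi}\p_\nu v$ on $\Sigma$.

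\textbf{Lower bounds for $\|P_1v\|^2+\|P_2v\|^2$.} This is the main obstacle. We need $h^2\|v\|^2+h^4\|\nabla v\|^2$, while the term $-h^2\|v(T)\|^2$ from the cross term must be absorbed; the restriction $\beta>1/\sqrt3$ should enter exactly here. We will try the following. Bound $\|P_2v\|^2\ge h^4\|\p_tv\|^2+4h^2\|\p_{x_1}v\|^2+2\Re(h^2\p_tv,-2h\p_{x_1}v)$, where the last pairing vanishes up to boundary terms. Then use the Poincaré inequality in $x_1$ on the bounded set $M$ (valid since $v=0$ on $\Sigma$), which gives $h^2\|v\|^2\lesssim h^2\|\p_{x_1}v\|^2$. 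For the gradient, take $\Re(P_1v,v)$, which yields
\[
h^2\|\nabla v\|^2\le\|P_1v\|\|v\|+(1-\beta^2)\|v\|^2 ;
\]
this gives the $h^4\|\nabla v\|^2$ term after weighting by $h^2$.

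The term $\|v(T)\|^2$ is controlled by writing $\|v(T)\|^2=2\Re(\p_tv,v)$ and bounding it by $\|P_2v\|$, $\|\p_{x_1}v\|$ and $\|v\|$. The constant $1-\beta^2<2/3$ is then small enough to be absorbed; if the absorption fails, we will instead add a convexifying modification $\varphi+\frac{h}{2\epsilon}x_1^2$ and remove it afterwards. Finally, we move $\tfrac{h^4}{2}\|\nabla v(T)\|^2$ to the left side at size $h$ via $\nabla u=e^\varphi(\nabla v+\tfrac1h v\,dx_1)$, which is why the estimate displays $\nabla u(T)$ on both sides.

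\textbf{Lower order terms.} The perturbation is $h^2(-\n{A,d\cdot}+d^*(A\cdot)-\n{A,A}+q)$. Conjugated by $e^\varphi$, it has $L^2$ norm at most
\[
\cO(h^2)\big(\|\nabla v\|+h^{-1}\|v\|\big)=\cO(h)\|v\|+\cO(h^2)\|\nabla v\| ,
\]
using $A\in W^{1,\infty}$ and $q\in C(\overline Q)$. A small constant can be arranged by taking $h$ small, after first establishing the principal estimate with a large constant in front of $h\|v\|$, i.e. an $h$-gain from the Poincaré step. Returning to $u$ through $v=e^{-\varphi}u$ then yields \eqref{eq:boundary_Carleman}.
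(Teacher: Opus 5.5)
The paper does not prove this estimate; it quotes it from Mishra--Purohit--Vashisth, Theorem~2.1. Several of your steps are fine. The conjugation is right, and so is the split $P_1=-h^2\Delta_g-(1-\beta^2)$, $P_2=h^2\partial_t-2h\partial_{x_1}$. The reduction of $2\Re(P_1v,P_2v)$ to boundary terms via $[P_1,P_2]=0$ is also correct. One sign needs fixing: the lateral term is $+2h^3\int_\Sigma\partial_\nu\Gamma\,|\partial_\nu v|^2$, not negative, and that sign is what puts $\Sigma_+$ on the right-hand side.

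Your lower bound for $\|P_2v\|^2$ fails as written. The pairing $\Re(\partial_t v,\partial_{x_1}v)$ is a genuine bulk term; only its imaginary part integrates out to a term at $t=T$. In fact $P_2$ annihilates every function of $2t+hx_1$, so $\|P_2v\|^2\ge 4h^2\|\partial_{x_1}v\|^2$ is false. What does hold is a Poincar\'e inequality along the characteristics $t+\frac{h}{2}x_1=\mathrm{const}$ of $P_2=-2h(\partial_{x_1}-\frac{h}{2}\partial_t)$. Each such segment leaves $Q$ at its largest-$x_1$ end through $\Sigma$ or $\{t=0\}$, where $v=0$. The shear $(t,x_1)\mapsto(t+\frac{h}{2}x_1,x_1)$ preserves $dV_g\,dt$, so you get $\|P_2v\|\ge c_M h\|v\|$, with $c_M$ depending only on the $x_1$-width of $M$.

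The decisive gap is the absorption of the lower-order terms. The constant $c_M$ is fixed by the geometry, and Poincar\'e gives no $h$-gain. Conjugating $-\langle A,d\cdot\rangle_g+d^\ast(A\cdot)$ produces $-2hA_1v-2h^2\langle A,dv\rangle_g$. These are exactly of the orders $h\|v\|$ and $h^2\|\nabla_g v\|$ that you control, with a coefficient of size $\|A\|_{L^\infty}$ that does not depend on $h$. Shrinking $h$ therefore absorbs nothing, and you would need $\|A\|_{L^\infty}$ small relative to $c_M$, which the statement does not assume.

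The missing idea is to convexify before absorbing. Take $\varphi_\epsilon=\varphi-\frac{x_1^2}{2\epsilon}$, i.e.\ spatial weight $f(x_1)/h$ with $f=x_1-\frac{h}{2\epsilon}x_1^2$. Then $P_1=-h^2\Delta_g-(f'^2-\beta^2)$ is still time independent, so $[P_1,h^2\partial_t]=0$ and only boundary terms at $t=T$ arise. For the antisymmetric part $B=-2hf'\partial_{x_1}-hf''$ one computes $[P_1,B]=\frac{4h^2}{\epsilon}f'^2-\frac{4h^4}{\epsilon}\partial_{x_1}^2\ge 0$. Combined with $\Re(P_1v,v)$, this gives $\|P_\epsilon v\|\gtrsim\frac{h}{\sqrt{\epsilon}}(\|v\|+\|h\nabla_g v\|)$ up to boundary terms. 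Now choose $\epsilon$ small in terms of $\|A\|_{W^{1,\infty}}$ and $\|q\|_{L^\infty}$, then $h$ small. Finally remove the factor $e^{x_1^2/(2\epsilon)}$, which is bounded above and below on $M$ uniformly in $h$.

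Your fallback $\varphi+\frac{h}{2\epsilon}x_1^2$ changes $h\nabla\varphi$ only by $\mathcal{O}(h^2)$ and has the wrong sign, so it gives no usable gain.

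The final-time term needs neither absorption nor $\beta>1/\sqrt{3}$. Since $\Re\int_M\partial_{x_1}v\,\bar v\,dV_g=0$, one has $h^2\|e^{-\varphi(T)}\nabla_g u(T)\|^2=\|v(T)\|^2+h^2\|\nabla_g v(T)\|^2$. Hence $h\|v(T)\|$ is dominated by the $\mathcal{O}(h)\|e^{-\varphi(T)}\nabla_g u(T)\|$ term already on the left. Your route through $h^2\|v(T)\|^2=2\Re(P_2v,v)\le 2\|P_2v\|\|v\|$ loses a factor $h^{-1}$ and does not close.
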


We next turn our attention to the interior Carleman estimate. To this end, we isometrically embed $(\overline{Q},e\oplus g)$  into a closed Riemannian manifold  $(N,g')$ without boundary, where $g' =e\oplus g$ in some open neighborhood $U \subset N$ of $\overline{Q}$. Here $U=(a,b)\times \hat M$, where $[0,T]\subset (a,b)$, and $\hat{M}$ is an open manifold such that $M \subset \hat M$.

Let $h>0$ be a small semiclassical parameter. We define the  space $H^1_{\mathrm{scl}}(N)$ as the completion of $C^\infty(N)$ with respect to the norm
\[
\|u\|_{H^1_{\mathrm{scl}}(N)}^2:=   \|u\|^2_{L^2(N)} + \|h\nabla_g u\|^2_{L^2(N)}.
\]
Furthermore, since $(N,g)$ is a Riemannian manifold without boundary, the space $H^{-1}_{\mathrm{scl}}(N)$ is the topological dual of $H^1_{\mathrm{scl}}(N)$ with respect to the norm
\[
\|v\|_{H^{-1}_{\mathrm{scl}}(N)} = \sup_{0\neq \psi \in H^{1}_{\scl}(N)}\frac{|\langle v,\psi \rangle_{L^2(N)}|}{\|\psi\|_{H^{1}_\mathrm{scl}(N)}}.
\]

We are now ready to state the following interior Carleman estimates for negative order Sobolev spaces, which were originally established in \cite[Lemma 2.2]{Mishra_Purohit_Vashisth}.

\begin{prop}
\label{prop:interior_Carleman}
Let $\varphi(t,x)$, $A(t,x)$, and $q(t,x)$ be the same as in Proposition \ref{prop:bdy_Carleman}. Then for all $0 <h \ll 1$ and $v\in C_0^\infty(Q)$, there exists a constant $C>0$, independent of $h$ and $v$, such that
\[
\|v\|_{L^2(0,T;L^2(N))}
\le
Ch^{-1}\|e^{\varphi} h^2 \mathcal{L}^\ast_{g,A,q}e^{- \varphi}v\|_{L^2(0,T;H^{-1}_{\mathrm{scl}}(N))},
\]
and
\[
\|v\|_{L^2(0,T;L^2(N))}
\le
Ch^{-1} \|e^{-\varphi} h^2 \mathcal{L}_{g,A,q}e^{  \varphi}v\|_{L^2(0,T;H^{-1}_{\mathrm{scl}}(N))}.
\]
\end{prop}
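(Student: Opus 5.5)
We follow the standard strategy of Kenig--Sj\"ostrand--Uhlmann and Dos Santos Ferreira--Kenig--Sj\"ostrand--Uhlmann, adapted to the parabolic setting. We prove only the first inequality. The second follows identically, since $\mathcal{L}^\ast_{g,A,q}$ has the same structure with $\p_t$ replaced by $-\p_t$, $A$ by $-\bar A$, and $q$ by a modified $\bar q$, and the weight $-\varphi$ enters in the same way.

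\textbf{Step 1: reduction and conjugation.} Write $P_\varphi = e^{\varphi} h^2 \mathcal{L}^\ast_{g,A,q} e^{-\varphi}$. Since $\varphi = h^{-2}\beta^2 t + h^{-1}x_1$ and $g=e\oplus g_0$, a direct computation gives the conjugated principal part
\[
P_0 = -h^2\p_t - h^2\Delta_g + 2h\p_{x_1} - (1-\beta^2)
\]
(signs to be checked), plus a remainder $R$. The remainder $R$ consists of the terms $h^2\n{A,d\cdot}$, $h^2 d^\ast(A\,\cdot)$, $h\n{A,dx_1}$, $h^2\n{A,A}$ and $h^2 q$. The two terms carrying derivatives are rewritten in divergence form. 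Because $A\in W^{1,\infty}$, each term of $R$ then satisfies
\[
\|Rv\|_{L^2(0,T;H^{-1}_{\scl})} \le C h\|v\|_{L^2}.
\]
This is of exactly the same size as the target gain. So $R$ cannot be absorbed directly into an estimate of the form $h\|v\|\lesssim \|P_0 v\|_{H^{-1}}$; we need a gain with a large constant.

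\textbf{Step 2: convexified $L^2$ estimate.} Replace $x_1/h$ by the convexified weight $\varphi_\varepsilon = \varphi + \frac{x_1^2}{2\varepsilon}$, where $\varepsilon>0$ is small and fixed. Split the conjugated principal operator into formally self-adjoint and skew-adjoint parts, $P_{0,\varepsilon} = \mathcal{A} + i\mathcal{B}$, where $h^2\p_t$ and the first-order $x_1$-part belong to the skew part. Then expand
\[
\|P_{0,\varepsilon}v\|^2 = \|\mathcal{A}v\|^2 + \|\mathcal{B}v\|^2 + (i[\mathcal{A},\mathcal{B}]v,v),
\]
with no boundary terms in $t$ because $v\in C_0^\infty(Q)$. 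The limiting-Carleman-weight property of $x_1$, together with the convexification, should yield
\[
\frac{h}{\sqrt{\varepsilon}}\|v\|_{H^1_{\scl}} \le C\|P_{0,\varepsilon}v\|_{L^2}.
\]
The time derivative only contributes through the skew part, and the restriction $\beta\in(\frac{1}{\sqrt 3},1)$ should enter here to keep the zeroth-order real part under control.

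\textbf{Step 3: shift by one Sobolev index.} Let $J=(1-h^2\Delta_{N})^{-1/2}$ act in the spatial variables on $N$, and pick a cutoff $\chi$ equal to $1$ near $\supp v$. We apply the Step 2 estimate to $\chi J v$ and use three facts:
\begin{itemize}
\item semiclassical calculus gives $[P_{0,\varepsilon},\chi J] = \cO(h)$ as a map $L^2\to L^2$, with $\p_t$ commuting with $J$;
\item $(1-\chi)Jv$ is $\cO(h^\infty)$;
\item $\|J w\|_{L^2}\simeq\|w\|_{H^{-1}_{\scl}}$.
\end{itemize}
Together these give
\[
\frac{h}{\sqrt\varepsilon}\|v\|_{L^2} \le C\|P_{0,\varepsilon}v\|_{H^{-1}_{\scl}} + Ch\|v\|_{L^2}.
\]
Choosing $\varepsilon$ small absorbs the commutator error. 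By Step 1, the remainder $R$ contributes another $Ch\|v\|$, which is absorbed in the same way. Finally, since $e^{x_1^2/2\varepsilon}$ is bounded above and below on $\overline Q$ independently of $h$, conjugating back from $\varphi_\varepsilon$ to $\varphi$ costs only a constant, and this gives the claimed inequality.

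The main obstacle is Step 2: obtaining the positive commutator with the gain $h/\sqrt{\varepsilon}$ uniformly in the presence of the parabolic term $h^2\p_t$ and the shift $-(1-\beta^2)$. The key choices there are to place $h^2\p_t$ in the skew-adjoint part and to compute the commutator explicitly on the product metric.
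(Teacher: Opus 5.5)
Your proposal is correct, and your $P_0$ has the right signs. The paper gives no proof of its own here; it defers to \cite[Lemma 2.2]{Mishra_Purohit_Vashisth}, and your convexification-plus-shift scheme (in the style of Kenig--Sj\"ostrand--Uhlmann) is the standard route to such an estimate.

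Two details to fix when you write Step 2 out:
\begin{enumerate}
\item With the semiclassical weight $F(x_1)=x_1+\frac{h}{2\varepsilon}x_1^2$, the product metric gives exactly $([\mathcal{A},i\mathcal{B}]w,w)=\frac{4h^2}{\varepsilon}\bigl(\|h\p_{x_1}w\|^2+\|F'w\|^2\bigr)$. So the restriction on $\beta$ plays no role in this interior estimate, contrary to your guess.
\item For the second inequality you must convexify the exponent $-\varphi$ as $-\varphi+\frac{x_1^2}{2\varepsilon}$, or equivalently reflect $x_1\mapsto -x_1$. If you use $-(\varphi+\frac{x_1^2}{2\varepsilon})$ instead, then $F''<0$ and the commutator changes sign, so the argument breaks.
\end{enumerate}
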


Armed with Proposition \ref{prop:interior_Carleman}, we obtain the following solvability result. The proof follows a standard argument involving the Hahn-Banach theorem and the Riesz representation theorem. We refer readers to \cite{Sahoo_Vashith} for details.

\begin{lem}
\label{lem:solvability}
Let $\varphi(t,x), A(t,x)$, and $q(t,x)$ be the same as in Proposition \ref{prop:bdy_Carleman}, and let $0 < h \ll 1$ be a  small semiclassical parameter. Then for any function  $F\in L^2(Q)$, there exists a solution $u\in H^1(0,T;H^{-1}(M))\cap L^2(0,T;H^1(M))$ to the equation $e^{\varphi} h^2\mathcal{L}^\ast_{g,A,q} e^{-\varphi} u = F$ in $Q$
such that
\begin{equation}
\label{eq:est_decreasing_solution}
\|u\|_{L^2(0,T;H^1_\mathrm{scl}(M))} \le \cO(h^{-1})\|F\|_{L^2(Q)}.
\end{equation}
On the other hand, there also exists a solution $v\in H^1(0,T;H^{-1}(M))\cap L^2(0,T;H^1(M))$ to the equation $e^{-\varphi} h^2\mathcal{L}_{g,A,q} e^{\varphi} v= F$ in $Q$
such that
\begin{equation}
\label{eq:est_increasing_solution}
\|v\|_{L^2(0,T;H^1_\mathrm{scl}(M))} \le \cO(h^{-1}) \|F\|_{L^2(Q)}.
\end{equation}
\end{lem}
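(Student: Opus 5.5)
The plan is the standard duality argument: Hahn--Banach for existence, Riesz representation for the bound. I give the argument for the first equation; the second is identical after exchanging the roles of $\varphi$ and $-\varphi$ and of $\mathcal{L}_{g,A,q}$ and $\mathcal{L}^\ast_{g,A,q}$, and using the first inequality of Proposition \ref{prop:interior_Carleman} in place of the second.

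Set $P_\varphi = e^{-\varphi}h^2\mathcal{L}_{g,A,q}e^{\varphi}$. Its formal $L^2(Q)$-adjoint is $P_\varphi^\ast = e^{\varphi}h^2\mathcal{L}^\ast_{g,A,q}e^{-\varphi}$. Consider the subspace
\[
\mathcal{D}=\{P_\varphi w : w\in C_0^\infty(Q)\}\subset L^2(0,T;H^{-1}_{\mathrm{scl}}(N)).
\]
Given $F\in L^2(Q)$, define a linear functional on $\mathcal{D}$ by $\ell(P_\varphi w)=\langle w, F\rangle_{L^2(Q)}$.

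The key step is to check that $\ell$ is well defined and bounded. By the second estimate of Proposition \ref{prop:interior_Carleman},
\[
|\ell(P_\varphi w)|\le \|w\|_{L^2(Q)}\|F\|_{L^2(Q)}\le Ch^{-1}\|F\|_{L^2(Q)}\,\|P_\varphi w\|_{L^2(0,T;H^{-1}_{\mathrm{scl}}(N))}.
\]
The same estimate shows that $P_\varphi w=0$ forces $w=0$, so $\ell$ is well defined. By Hahn--Banach, $\ell$ extends to a functional on all of $L^2(0,T;H^{-1}_{\mathrm{scl}}(N))$ with the same norm bound $Ch^{-1}\|F\|_{L^2(Q)}$.

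Since the dual of $L^2(0,T;H^{-1}_{\mathrm{scl}}(N))$ is $L^2(0,T;H^{1}_{\mathrm{scl}}(N))$, the Riesz representation theorem gives some $u\in L^2(0,T;H^1_{\mathrm{scl}}(N))$ with
\[
\ell(f)=\langle f,u\rangle \quad\text{for all } f, \qquad \|u\|_{L^2(0,T;H^1_{\mathrm{scl}})}\le Ch^{-1}\|F\|_{L^2(Q)}.
\]
Testing against $f=P_\varphi w$ gives $\langle P_\varphi w,u\rangle=\langle w,F\rangle$ for all $w\in C_0^\infty(Q)$. This says $P_\varphi^\ast u=F$ in the sense of distributions on $Q$. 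Restricting $u$ to $Q$ then yields \eqref{eq:est_decreasing_solution}.

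What remains is time regularity, and this is the delicate part. I expect to get it from the equation itself. Since $u\in L^2(0,T;H^1(M))$, the expression $h^2\mathcal{L}^\ast_{g,A,q}$ conjugated by $e^{\pm\varphi}$ is $\mp h^2\partial_t$ plus a spatial operator of order at most two with $W^{1,\infty}$ coefficients. That spatial operator maps $L^2(0,T;H^1(M))$ into $L^2(0,T;H^{-1}(M))$, and $F\in L^2(Q)$ lies there as well. Solving the equation for $\partial_t u$ therefore gives $\partial_t u\in L^2(0,T;H^{-1}(M))$, so $u\in H^1(0,T;H^{-1}(M))$. Here $h$ is fixed, so these bounds need not be uniform in $h$.
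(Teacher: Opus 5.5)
Your proposal is correct and follows the route the paper indicates by citing \cite{Sahoo_Vashith}: bound the functional $P_\varphi w\mapsto\langle w,F\rangle_{L^2(Q)}$ using the $H^{-1}_{\mathrm{scl}}$ interior Carleman estimate for the operator whose adjoint you want to invert (and you match each equation to the correct inequality), extend it by Hahn--Banach, and represent it by Riesz. Your final step, which reads off $\partial_t u\in L^2(0,T;H^{-1}(M))$ from the equation, supplies the time regularity that the paper leaves to that reference.
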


\section{Construction of Complex Geometric Optics Solutions}
\label{sec:CGO_solution}

Let $(M,g)$ be a CTA manifold as in Definition \ref{def:CTA_manifolds}. Suppose that  $A(t,x)\in W^{1,\infty}(Q)$ and  $q\in C(\overline{Q})$. We set $0<h \ll 1$ to be a small semiclassical parameter and write $s=\frac{1}{h}$ throughout the rest of this paper. In the first part of this section, we again assume that the conformal factor $c=1$.
Our goal in this section is to construct an exponentially decaying solution to the equation $\mathcal{L}^\ast_{g,A,q} u_1=0$ in $Q$ of the form
\begin{equation}
\label{eq:form_exp_decaying_solution}
u_1(t,x) = e^{-s^2 \beta^2 t - s x_1}(v_s+r_1),
\end{equation}
as well as an exponentially growing solution to the equation $\mathcal{L}_{g,A,q} u_2=0$ in $Q$ that takes the form
\begin{equation}
\label{eq:form_exp_growing_solution}
u_2(t,x) = e^{s^2\beta^2 t + s x_1}(w_s+r_2).
\end{equation}
Here, $v_s$ and $w_s$ are smooth Gaussian beam quasimodes, and  the correction terms $r_1$ and $r_2$ vanish in a suitable sense as $h\to 0$.  Let us remark that, since $(M,g)$ is not necessarily simple, global constructions based on geodesic coordinates as in \cite{Mishra_Purohit_Vashisth} is not applicable to our case. Thus, we need to construct solutions  using Gaussian beams.

\subsection{Construction of Gaussian beam quasimodes}

In this subsection we focus on constructing Gaussian beam quasimodes, which was originally developed in \cite{Babich_Ulin, Ralston_1982} and has a broad range of applications in spectral theory and in microlocal analysis, see \cite{Babich_Buldyrev, Ralston_1977}, among others. On the other hand, starting with \cite{Belishev_Katchalov, Katchalov_Kurylev}, Gaussian beam quasimodes have been widely utilized  in the study of inverse problems. We refer readers to \cite{Cekic, Ferreira_Kenig_Salo_Uhlmann, Ferreira_Kur_Las_Salo, Feizmohammadi_et_all_2019, KKL_book, Krupchyk_Uhlmann_magschr, Liu_Saksala_Yan,Liu_Saksala_Yan_potential,Yan} and the references therein for application of Gaussian beams to elliptic and hyperbolic inverse problems.

Let $(M, g)$ be a CTA manifold with the conformal factor $c=1$, and let $T>0$.
Replacing the transversal manifold $(M_0, g_0)$ by a slightly larger manifold if necessary, we may assume without loss of generality that $(M, g)\subset (\R \times M_0^\mathrm{int}, e\oplus g_0)$.
The Gaussian beam quasimodes will be constructed in $\R^2 \times M_0^{\mathrm{int}}$.

We need to first regularize the convection term $A$ to obtain smooth Gaussian beam quasimodes. We shall explain why the regularization of $A$ is necessary in the proof of Theorem \ref{prop:Gaussian_beam}.
Let us  extend $A$ to a one-form in  $W^{1,\infty}(\R^2\times M_0^{\mathrm{int}})$ with compact support.
Then an application of  a partition of unity argument, in conjunction with a regularization in each coordinate patch, gives us the following result, see  \cite[Proposition 5.1]{Krupchyk_Uhlmann_magschr}.

\begin{prop}
\label{prop:regularization}
For any one-form $A \in W_0^{1,\infty}(\R^2\times M_0^{int})$, there exists an open  and bounded set $W\subset \R^2\times M_0^{int}$ and a family $A_\zeta\in C_0^\infty (W, T^\ast W)$ such that
\begin{equation}
\label{eq:est_diff_regu}
\begin{aligned}
&\|A-A_\zeta\|_{L^\infty}=o(1),
\quad
\|A_\zeta\|_{L^\infty}=\cO(1),
\quad
\|\nabla_g A_\zeta\|_{L^\infty}=\cO(\zeta^{-1}),
\\
&\|\p_t A_\zeta\|_{L^\infty}=o(\zeta^{-1}), \quad
\|\Delta_g A_\zeta\|_{L^\infty} =\cO(\zeta^{-2}), \quad \zeta \to 0.
\end{aligned}
\end{equation}
Here the $L^\infty$-norms are taken over the set $\R^2\times M_0^{int}$.
\end{prop}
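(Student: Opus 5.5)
The regularization is local, so I would reduce to coordinate patches and mollify, with one twist: the time variable and the spatial variables get different mollification scales.

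\textbf{Reduction to patches.} Since $A$ has compact support in $\R^2\times M_0^{int}$, cover $\supp A$ by finitely many coordinate charts $(t,x_1,x')\in \R\times\R\times U_k$, with $U_k\subset M_0^{int}$ coordinate patches. Take a smooth partition of unity $\{\chi_k\}$ subordinate to this cover, with $\sum_k\chi_k=1$ near $\supp A$. Write $A=\sum_k \chi_k A$; each $\chi_kA$ is a $W^{1,\infty}$ one-form with compact support inside a single chart. Componentwise in that chart, $\chi_kA=\sum_j a_{k,j}\,dy^j$ with $a_{k,j}\in W^{1,\infty}_0$, where $y=(t,x_1,x')$.

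\textbf{Mollification.} Mollify each coefficient anisotropically, with spatial scale $\zeta$ and temporal scale $\delta(\zeta)\to0$ chosen below. Set $a_{k,j}^\zeta=a_{k,j}*(\rho_\zeta\otimes\theta_{\delta})$, where $\rho_\zeta$ is a standard mollifier in $(x_1,x')$ and $\theta_\delta$ is one in $t$. Multiply by a cutoff $\tilde\chi_k\in C_0^\infty$ of the chart that equals $1$ on a neighborhood of $\supp\chi_k$, and define $A_\zeta=\sum_k\tilde\chi_k\sum_j a^\zeta_{k,j}dy^j$. For $\zeta$ small, the supports stay inside a fixed bounded open set $W$, namely the union of slightly enlarged supports.

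\textbf{Estimates.} The bounds follow from the standard mollifier estimates.
\begin{itemize}
\item $\|A_\zeta\|_{L^\infty}=\cO(1)$, since convolution does not increase $L^\infty$ norms and there are finitely many charts.
\item $\|A-A_\zeta\|_{L^\infty}\le C(\zeta+\delta)\|A\|_{W^{1,\infty}}$, because $W^{1,\infty}$ functions are Lipschitz. This gives the $o(1)$ bound, in fact $\cO(\zeta)$ once $\delta\le\zeta$.
\item For spatial derivatives, $\nabla_g$ in coordinates involves $\p_{x}$ and Christoffel terms. Putting one derivative on the function gives $\|\p_x a^\zeta\|_\infty\le\|\p_x a\|_\infty=\cO(1)\subset\cO(\zeta^{-1})$. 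For second derivatives, put one derivative on $a$ and one on $\rho_\zeta$, which gives $\cO(\zeta^{-1})$. Hence $\|\Delta_g A_\zeta\|_\infty=\cO(\zeta^{-1})=\cO(\zeta^{-2})$, with the lower-order metric terms bounded.
\item For the time derivative, $\p_t a^\zeta=(\p_t a)*(\rho_\zeta\otimes\theta_\delta)$ is $\cO(1)$, which is already $o(\zeta^{-1})$.
\end{itemize}
So any $\delta\le\zeta$ works, for instance $\delta=\zeta$. The cutoffs $\tilde\chi_k$ contribute only bounded factors and their derivatives.

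\textbf{Main obstacle.} The only real care point is bookkeeping. One has to check that the patchwise definition is coordinate-independent enough, i.e. that the one-form is a sum of compactly supported chart pieces, and that $\|A-A_\zeta\|_\infty\to0$ holds uniformly across the finitely many charts. Both follow from finiteness of the cover. The estimates themselves are weaker than what $W^{1,\infty}$ regularity actually gives, so no delicate balancing of scales is needed.
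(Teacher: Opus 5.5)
Your proof is correct and follows the same route the paper indicates. The paper gives no separate argument: it invokes a partition of unity with mollification in each coordinate patch, citing \cite[Proposition 5.1]{Krupchyk_Uhlmann_magschr}, which is what you carry out. Your observation that $W^{1,\infty}$ regularity yields much better rates than stated is also right: you get $\cO(\zeta)$ for the approximation error, $\cO(1)$ for the first spatial and time derivatives, and $\cO(\zeta^{-1})$ for the second spatial derivatives. The only cosmetic point is that the separate time scale $\delta$ is superfluous, since you end up taking $\delta=\zeta$, which is ordinary isotropic mollification.
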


Let $\gamma=\gamma(\tau)$ be a non-tangential geodesic in the transversal manifold $(M_0,g_0)$ of length $L>0$. By \cite[Example 9.32]{lee2012smooth}, we embed $(M_0, g_0)$ into a closed manifold $(\hat{M}_0, g_0)$ of the same dimension and extend $\gamma$ so that it remains a unit speed geodesic in $\hat{M}_0$. Furthermore, since $\gamma$ is non-tangential, there exists $\varepsilon>0$ such that $\gamma(\tau)\in \hat{M}_0\setminus M_0$, and $\gamma$ does not have self-intersections for any $\tau \in [-2\varepsilon, 0) \cup (L, L+2\varepsilon]$.

The main result of this subsection is as follows.

\begin{thm}
\label{prop:Gaussian_beam}
Let $(M, g)$ be a smooth CTA manifold with boundary. Let $s=\frac{1}{h} $, where $0<h\ll 1$ is a small semiclassical parameter, $T>0$, and $\beta \in (\frac{1}{\sqrt{3}},1)$.
Let $A \in W^{1,\infty}(Q)$ and $q \in C(\overline{Q})$. Then for each unit speed non-tangential geodesic $\gamma$ of $(M_0,g_0)$, there exist one parameter families of Gaussian beam quasimodes $v_s, w_s\in C^\infty(\R^2\times M_0)$ such that the following estimates hold as $h \to 0$:
\begin{equation}
\label{eq:estimate_v}
\|v_s\|_{H^1_\scl(Q)}=\cO(1),
\quad
\|e^{s^2 \beta^2 t + sx_1} h^2\mathcal{L}^{*}_{g, A, q} e^{-s^2 \beta^2 t - s x_1}v_s\|_{L^2(Q)} =o(h),
\end{equation}
and
\begin{equation}
\label{eq:estimate_w}
\|w_s\|_{H^1_\scl(Q)}=\cO(1),
\quad
\|e^{-s^2 \beta^2 t - s x_1} h^2\mathcal{L}_{g, A, q} e^{s^2 \beta^2 t + s x_1}w_s\|_{L^2(Q)}=o(h).
\end{equation}
\end{thm}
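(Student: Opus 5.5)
We will construct $v_s$ and $w_s$ by separating the Euclidean directions $(t,x_1)$ from the transversal direction, and building a Gaussian beam along $\gamma$ in $M_0$. We treat only $w_s$; the case of $v_s$ is identical, with $\mathcal{L}$ replaced by $\mathcal{L}^\ast$ (which flips the signs of $\p_t$ and of the first-order terms) and with the opposite exponential.

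The first step is to conjugate the operator. Since $g=e\oplus g_0$, a direct computation gives
\[
e^{-\Phi}h^2\mathcal{L}_{g,A,q}e^{\Phi}, \qquad \Phi=s^2\beta^2t+sx_1 .
\]
Because $|d x_1|=1$, this operator equals
\[
\beta^2+h^2\p_t-h^2\p_{x_1}^2-2h\p_{x_1}-1-h^2\Delta_{g_0}-2hA_1+\text{lower order terms}.
\]
So the principal symbol in the transversal variables has the constant $\beta^2-1<0$, which forces a transversal oscillation of frequency $\sqrt{1-\beta^2}/h$.

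The next step is the ansatz
\[
w_s=e^{i\sqrt{1-\beta^2}\,s\,\Theta(x')}\,a_s(t,x),
\]
where $\Theta$ is the complex Gaussian beam phase along $\gamma$, written in Fermi coordinates $(\tau,y)$ near $\gamma$. It has the form
\[
\Theta=\tau+\tfrac12\langle H(\tau)y,y\rangle+O(|y|^3),\qquad \Im H>0,
\]
where $H$ solves the matrix Riccati equation. Then $\langle d\Theta,d\Theta\rangle_{g_0}=1$ to order $O(|y|^3)$ on $\gamma$, which kills the $h^0$ term up to $O(|y|^3)$.

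The amplitude is
\[
a_s=h^{-(n-2)/4}\,e^{\Phi_1(t,x_1,\tau)}\,a_0(\tau,y)\,\chi(y/\delta),
\]
with $\chi$ a cutoff. Here $a_0=\det(Y(\tau))^{-1/2}$ is the standard transport solution. The factor $\Phi_1$ must solve the $h^1$ transport equation along $\gamma$. This equation involves the first-order operator
\[
-2\p_{x_1}-2i\sqrt{1-\beta^2}\,\p_\tau ,
\]
plus the convection terms $-2A_{1}-2i\sqrt{1-\beta^2}\,A_\tau$ restricted to $\gamma$, i.e. $\langle A,\dot\gamma\rangle$. We solve it with the $\bar\p$-type inverse in the complex variable $x_1+i\sqrt{1-\beta^2}\,\tau$ (the Cauchy transform). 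This produces the attenuation and the $\langle A,\dot\gamma\rangle$ factor later seen in the ray transform.

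Since $A$ is only $W^{1,\infty}$ and $\Phi_1$ must be differentiated twice, we replace $A$ by $A_\zeta$ from Proposition \ref{prop:regularization} in the transport equation, with $\zeta=\zeta(h)\to0$ chosen slowly. For the same reason $t$ enters only as a parameter. The bounds in \eqref{eq:est_diff_regu} then give control of the derivatives of $\Phi_1$:
\[
\p\Phi_1=\cO(\zeta^{-1}),\qquad \p^2\Phi_1=\cO(\zeta^{-2}),\qquad \p_t\Phi_1=o(\zeta^{-1}).
\]
The function $q$ and the remaining zeroth-order terms appear only at order $h^2$, where boundedness suffices.

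The last step is the error estimate. Since $|e^{is\Theta}|=e^{-s\,\Im\langle Hy,y\rangle/2}$ and we normalize by $h^{-(n-2)/4}$, the bound $\|w_s\|_{L^2}=\cO(1)$ follows from $\int e^{-c|y|^2/h}\,dy\sim h^{(n-2)/2}$. The same computation with the extra factor $h$ gives the bound on $h\nabla w_s$. Applying the conjugated operator, the remainder consists of three pieces:
\begin{itemize}
\item terms $hO(|y|^3)/h$ from the eikonal error, which contribute $\cO(h^{3/2})$ after the Gaussian integral, since $|y|^k$ gains $h^{k/2}$;
\item a first-order error $h\,(A-A_\zeta)=o(h)$;
\item second-order terms $h^2\p^2\Phi_1=\cO(h^2\zeta^{-2})$ and $h^2\p_t\Phi_1=o(h^2\zeta^{-1})$.
\end{itemize}
All of these are $o(h)$ provided $h\zeta^{-2}\to0$. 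The cutoff $\chi$ contributes only exponentially small errors. Near $\p M_0$ the geodesic is extended into $\hat M_0$, and self-intersections are handled by summing local beams on finitely many pieces; cross terms are negligible because the phases differ.

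The main obstacle is balancing the regularization: we must choose $\zeta(h)$ so that the $o(1)$ error $\|A-A_\zeta\|_{L^\infty}$ multiplied by $h$ and the $\zeta^{-2}h^2$ second-derivative terms are simultaneously $o(h)$. This works if $\zeta\to0$ with $h=o(\zeta^2)$.
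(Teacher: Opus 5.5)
Your proposal is correct and follows essentially the same route as the paper. It uses the same conjugation and eikonal equation $\langle d\psi,d\psi\rangle_{g_0}=1-\beta^2$, solved by a Riccati Gaussian-beam phase in Fermi coordinates, and the same amplitude $h^{-\frac{n-2}{4}}e^{\Phi_{\zeta}+f(r)}\chi(y/\delta')$, whose transport equation is driven by the regularized $A_\zeta$ on $\gamma$ and solved with the Cauchy kernel; your condition $h=o(\zeta^2)$ is exactly the paper's choice $\zeta=h^\alpha$, $\alpha\in(0,\frac12)$, followed by the same gluing along $\gamma$. The only slip is cosmetic: $\p_{x_1}+i\sqrt{1-\beta^2}\,\p_\tau$ is a multiple of $\overline{\partial}$ in the variable $x_1+i\tau/\sqrt{1-\beta^2}=x_1+ir$ (the paper's rescaling $r=\tau/\sqrt{1-\beta^2}$), not in $x_1+i\sqrt{1-\beta^2}\,\tau$.
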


\begin{proof}
Let us start with the construction of Gaussian beam quasimodes for the conjugated operator $e^{s^2 \beta^2 t + s x_1} h^2\mathcal{L}^{*}_{A, q, g} e^{-s^2 \beta^2 t -  s x_1}$.  The proof mainly follows the ideas in \cite{Krupchyk_Uhlmann_magschr, Liu_Saksala_Yan} and modifies the arguments accordingly for the parabolic operator considered in this paper, see also \cite{Cekic, DDS_Kenig_Sjo_Uhl}.

Consider the ansatz $v_s$ of the form
\[
v_s(t,x_1, x';h) = e^{is\psi(x')} a(t,x_1,x';h),
\]
where the amplitude $a$ depends implicitly on the parameter $h$. Let us recall that the formal $L^2$-adjoint of the convection-diffusion operator $\mathcal{L}_{g, A, q}$ is given by
\begin{equation}
\label{eq:adjoint}
\mathcal{L}^{*}_{g, A, q} = - \p_t-\Delta_g +\n{\overline A,d\cdot}_g - d^\ast (\overline{A}\cdot)+ \langle \overline{A}, \overline{A} \rangle_g+\overline{q}.
\end{equation}
Due to the time-independence of the phase function $\psi$, it follows from direct computations that
\begin{equation}
\label{eq:computation_time_derivative}
e^{-is\psi} \p_t(e^{is\psi} a) = \p_t a.
\end{equation}
Also, since the metric $g=e\oplus g_0$, and $\psi$ is independent of $x_1$, we have
\begin{equation}
\label{eq:computation_Laplacian}
\begin{aligned}
e^{-is\psi}(-\Delta_g)(e^{is\psi} a)
&=
e^{-is\psi}(-\p_{x_1}^2-\Delta_{g_0})(e^{is\psi} a)
\\
&=
-\Delta_g a-is[2\langle \nabla_{g_0}\psi, \nabla_{g_0}a(x_1, \cdot)\rangle_{g_0}+(\Delta_{g_0}\psi)a]
+
s^2\langle \nabla_{g_0} \psi, \nabla_{g_0} \psi\rangle_{g_0}a.
\end{aligned}
\end{equation}
For the first order term in \eqref{eq:adjoint}, we compute directly to obtain
\begin{equation}
\label{eq:computation_first_order_1}
e^{-is\psi} \langle A, d(e^{is\psi} a)\rangle_g
=
\n{A, da}_g +is \n{A, \nabla_{g_0}\psi}_g a,
\end{equation}
and
\begin{equation}
\label{eq:computation_first_order_2}
e^{-is\psi} d^\ast (Ae^{is\psi} a)
=d^\ast (Aa) -is \n{A, \nabla_{g_0} \psi}_g a.
\end{equation}
Therefore, we deduce from \eqref{eq:computation_time_derivative}--\eqref{eq:computation_first_order_2} that
\begin{equation}
\label{eq:conjugated_L1}
\begin{aligned}
&e^{s^2\beta^2 t +s x_1} \mathcal{L}^{*}_{g, A, q} e^{-s^2 \beta^2 t - s x_1} \left( e^{is\psi(\tau, y)} a \right)
\\
= &
e^{is\psi} \left[ s^2 \left( \langle \nabla_{g_0} \psi, \nabla_{g_0} \psi \rangle_{g_0} - (1 - \beta^2) \right) a
\right.
\\
& \left.  +
s \left( 2 \p_{x_1} a - 2i \langle \nabla_{g_0} \psi, \nabla_{g_0} a \rangle_{g_0} - i (\Delta_{g_0} \psi) a -2 \overline A_1 a +2i  \langle \overline A(x_1, \cdot), \nabla_{g_0} \psi \rangle_{g_0} a \right) + \mathcal{L}^{*}_{g,A, q} a \right].
\end{aligned}
\end{equation}

We next construct the phase function $\psi$ and the amplitude $a$ such that they approximately satisfy the eikonal and transport equations corresponding to the coefficients of the $s^2$ and $s$ terms on the right-hand side of \eqref{eq:conjugated_L1}, respectively. The construction is lengthy and is carried out in several steps. We refer readers to \cite[Subsection 4.1.1]{Liu_Saksala_Yan} for a brief explanation of the purposes of each step.

\textit{Step 1: Fermi coordinates near $\gamma$}.
We fix a point $z_0=\gamma(\tau_0)$ on $\gamma([-\varepsilon, L+\varepsilon])$ and construct the quasimode locally near $z_0$. Let
\[
\Omega :=\{(\tau, y)\in \R \times \R^{n-2}: |\tau-\tau_0|<\delta, |y|<\delta'\}, \quad \delta, \delta'>0,
\]
be Fermi coordinates near $z_0$. We only describe the heuristic ideas of the construction of these coordinates and refer readers to \cite[Lemma 7.4]{Kenig_Salo} for details.
First, we choose a sufficiently small number $\delta>0$ such that the geodesic segment $\gamma|_{[\tau_0-\delta,\tau_0+\delta]}$  does not contain any self-intersections. Afterwards, we use parallel transport to choose some vector fields $E_1(\tau),\ldots,E_{n-2}(\tau)$ along $\gamma$ such that the set   $\{\dot\gamma(\tau),E_1(\tau),\ldots,E_{n-2}(\tau)\}$ spans a parallel orthonormal frame along $\gamma$. Our choice of $\delta$, combined with the inverse function theorem, implies that there exists $\delta'>0$ such that the map
$
F(\tau,y)=\exp_{\gamma(\tau)}\left(y^\alpha E_\alpha(\tau) \right)
$
is a diffeomorphism in $\Omega$, where $\exp$ is the exponential map of $(M_0,g_0)$, and $\alpha \in \{1,\dots,n-2\}$.

In these coordinates, the geodesic $\gamma$ near $z_0=\gamma(\tau_0)$ is represented  by the set $\rho=\{(\tau, 0): |\tau-\tau_0|<\delta\}$.
Moreover, the metric $g_0$ satisfies
\begin{equation}
\label{eq:g0_in_Fermi_coordinates}
g_0^{jk}(\tau,0)=\delta^{jk} \quad \text{and} \quad  \p_{y_l}g_0^{jk}(\tau, 0)=0.
\end{equation}
Thus, an application of Taylor's theorem yields that for small $|y|$ we have
\begin{equation}
\label{eq:g0_near_geo}
g_0^{jk}(\tau, y)=\delta^{jk}+\mathcal{O}(|y|^2).
\end{equation}
Furthermore, the Gaussian beam ansatz $v_s$ is of the form
\[
v_s(t, x_1, \tau, y)= e^{is\psi(\tau, y)} a(t, x_1, \tau, y; h).
\]
Our goal  is to find the phase function $\psi\in C^\infty(\Omega, \mathbb{C})$ such that \begin{equation}
\label{eq:property_of_phi}
\Im \psi \ge 0, \quad \Im \psi|_\rho=0, \quad \Im \psi(\tau, y)
\text{ is bi-Lipschitz equivalent to }
|y|^2,
\end{equation}
as well as an amplitude $a \in C^\infty(\mathbb{R}^2  \times \Omega, \mathbb{C})$ such that $\mathrm{supp}(a(t, x_1, \cdot)) \subset \{ |y| \leq \delta/2 \}$.

\textit{Step 2: Find the phase function}. We start the construction of the Gaussian beam quasimode by  finding a phase function $\psi$.

Similar to  \cite{Ferreira_Kur_Las_Salo,KKL_book,Krupchyk_Uhlmann_magschr,Liu_Saksala_Yan,Liu_Saksala_Yan_potential,Ralston_1977,Ralston_1982}, we find a function $\psi(\tau, y) \in C^\infty(\Omega, \C)$ that satisfies the equation
\begin{equation}
\label{eq:eikonel_eq}
\langle \nabla_{g_0} \psi, \nabla_{g_0}\psi \rangle_{g_0}-(1-\beta^2)
=
\mathcal{O}(|y|^3), \quad y\to 0,
\end{equation}
such that the inequality
\begin{equation}
\label{eq:imaginarypart_phi}
\Im \psi(\tau,y)\ge d|y|^2
\end{equation}
holds for some constant $d>0$ depending on $\beta$.

Arguing similarly to \cite[Subsection 4.1.3]{Liu_Saksala_Yan}, we obtain
\begin{equation}
\label{eq:phase_form}
\psi(\tau, y) =  \sqrt{1 - \beta^2} \left(\tau+ \frac{1}{2} H(\tau) y \cdot y\right).
\end{equation}
Here $H(\tau)$ is a smooth complex-valued symmetric matrix such that $\Im H(\tau)$ is positive-definite. Indeed, $H(\tau)$ is the unique solution to the initial value problem for the matrix Riccati equation
\begin{equation}
\label{eq:Riccati}
H'(\tau) + H(\tau)^2 = F(\tau), \quad H(\tau_0) = H_0,
\end{equation}
where $F(\tau)$ is a suitable symmetric matrix,

and $H_0$ is a  symmetric complex-valued matrix such that $\Im (H_0)$ is positive-definite, see \cite[Lemma 2.56]{KKL_book}.

\textit{Step 3: Find the amplitude.}
We next construct an amplitude $a$ of the form
\begin{equation}
a(t, x_1, \tau, y, h; \zeta) = h^{-\frac{n-2}{4}} a_0(t, x_1, \tau; \zeta) \chi\left( \frac{y}{\delta} \right),
\end{equation}
where $\zeta$ is a regularization parameter that we shall specify later. Also, $\chi\in C_0^\infty(\R^{n-2})$ is a cutoff function such that $\chi=1$ when $|y|\le \frac{1}{4}$, and $\chi =0$ for $|y| \ge \frac{1}{2}$.
Furthermore, the function $a_0 \in C^\infty(\mathbb{R}_t \times \mathbb{R}_{x_1} \times [\tau_0 - \delta, \tau_0 + \delta])$ is independent of $y$ and solves the approximate transport equation
\begin{equation}
\label{eq:a_0eqn}
2 \p_{x_1} a_0 - 2i \langle \nabla_{g_0} \psi, \nabla_{g_0} a_0 \rangle_{g_0} - i (\Delta_{g_0} \psi) a_0 -2 (\overline A_\zeta)_1 a_0 +2i \langle \overline A_\zeta(x_1, \cdot), \nabla_{g_0} \psi \rangle_{g_0} a_0 = \mathcal{O}(|y| \zeta^{-1})
\end{equation}
as $y, \zeta \to 0$. Here $A_\zeta$ is the regularized convection term.

We first compute
$\langle \nabla_{g_0} \psi, \nabla_{g_0} a_0 \rangle_{g_0}$.
To this end, we obtain from \eqref{eq:phase_form} that
\begin{equation}
\label{eq:dtau_phase}
\p_\tau \psi(\tau, y) = \sqrt{1 - \beta^2} +\cO(|y|^2).
\end{equation}
Therefore, we get from \eqref{eq:g0_near_geo} and \eqref{eq:dtau_phase} that
\begin{align*}
\langle \nabla_{g_0} \psi, \nabla_{g_0} a_0 \rangle_{g_0}
&= \left( \delta^{jk} + \cO(|y|^2) \right) \p_j \psi  \p_k a_0
\\
&= \sqrt{1 - \beta^2}   \p_\tau a_0 + \sqrt{1 - \beta^2}   H(\tau)   y \cdot \p_y a_0 + \cO(|y|^2)\p_\tau a_0+\cO(|y|)\p_ya_0.
\end{align*}

Turning our attention to computing $\Delta_{g_0} \psi$ near the geodesic $\gamma$, we utilize   \eqref{eq:g0_near_geo} and \eqref{eq:dtau_phase} again to obtain
\[
(\Delta_{g_0} \psi)(\tau, 0) = \sqrt{1 - \beta^2}   \tr H(\tau).
\]
This gives us
\begin{equation}
\label{eq:Laplacian_phase}
(\Delta_{g_0} \psi)(\tau, y) = \sqrt{1 - \beta^2}   \tr H(\tau) + \cO(|y|).
\end{equation}

We next Taylor expand the coefficients appearing on the left-hand side of \eqref{eq:a_0eqn}. Let us write
\begin{align*}
A_\zeta(t,x_1, \tau, y) = A_\zeta (t,x_1, \tau,0)+\int_{0}^{1}(\nabla_y A_\zeta(t,x_1,\tau, ys))yds.
\end{align*}
Thus, an application of \eqref{eq:est_diff_regu} yields that
\begin{equation}
\label{eq:expansion_A1}
A_\zeta(t,x_1, \tau, y) = A_\zeta (t,x_1, \tau, 0)+\mathcal{O}(|y|\zeta^{-1}).
\end{equation}
Furthermore, by writing $A=A_1dx_1+A_\tau d\tau+A_ydy$, as well as noting that
\[
\p_\tau \psi(\tau, y) = \sqrt{1-\beta^2} +\cO(|y|^2) \quad \text{ and } \quad \p_y \psi(\tau,y) = \cO(|y|),
\]
we get
\begin{equation}
\label{eq:expansion_inner_product}
\langle A_\zeta, \nabla_{g_0}\psi \rangle_{g_0}= \sqrt{1-\beta^2}(A_\zeta)_\tau (t, x_1, \tau, 0)+\mathcal{O}(|y|\zeta^{-1}).
\end{equation}
Hence, by combining  \eqref{eq:dtau_phase}--\eqref{eq:expansion_inner_product}, we deduce from the approximate transport  equation \eqref{eq:a_0eqn} that
\begin{align*}
2\left( \p_{x_1}
- i\sqrt{1-\beta^2} \p_\tau \right)a_0
-&
\left[
i\sqrt{1-\beta^2}\tr H(\tau)
+2
(\overline A_\zeta)_1 (t,x_1,\tau,0)
\right.
\\
& \left.
-2i \sqrt{1-\beta^2} (\overline A_\zeta)_\tau (t,x_1,\tau,0)
\right] a_0
=
\cO(|y| \zeta^{-1}).
\end{align*}
Therefore, we shall require that the function $a_0(t,x_1,\tau;\zeta)$ satisfies the equation
\begin{equation}
\label{eq:a_0eqn_after}
\begin{aligned}
&\left(\p_{x_1} - i \sqrt{1 - \beta^2} \p_\tau \right)a_0
\\
&= \frac{1}{2}\left[i \sqrt{1 - \beta^2}\tr H(\tau) +2 (\overline A_\zeta)_1 (t,x_1,\tau,0) -2i  \sqrt{1 - \beta^2} ( \overline A_\zeta)_\tau (t,x_1,\tau,0)\right]a_0.
\end{aligned}
\end{equation}

We next perform a change of variables to write the left-hand side of \eqref{eq:a_0eqn_after} as a $\p$-equation.

To that end, for any fixed number $\beta\in(\frac{1}{\sqrt3}, 1)$, let $S: \R \to \R$ be a linear function defined by $S(\tau) = \sqrt{1-\beta^2} \tau$. Clearly, its inverse function is given by $S^{-1}(\tau) = \frac{1}{\sqrt{1-\beta^2}}\tau =:r$. Thus, we obtain the equation
\begin{equation}
\label{eq:transport_after_change}
\begin{aligned}
&(\p_{x_1} - i \p_r) a_0'
\\
&= \frac{1}{2} \left[i \sqrt{1 - \beta^2} \tr H(\sqrt{1 - \beta^2} r) +2 (\overline A_{\zeta})'_1 (t,x_1, r, 0) -2i \sqrt{1 - \beta^2} (\overline A_{\zeta})'_\tau (t, x_1, r, 0) \right] a'_0,
\end{aligned}
\end{equation}
where $a'_0 = a_0 \circ S$, $(A_{\zeta})'_1  = (A_{\zeta})_1 \circ S$,
and $(A_{\zeta})'_\tau = \left(A_{\zeta}\right)_\tau \circ S$.

Let us now write $\p = \frac{1}{2}(\p_{x_1} - i \p_r)$ and look for a solution to  \eqref{eq:transport_after_change} in the form of
\begin{equation}
\label{eq:form_a0_prime}
a'_0 (t, x_1, r; \zeta) = e^{\Phi_{1,\zeta} (t, x_1, r) + f_1(r)}.
\end{equation}
Then it follows from direct computations that $\Phi_{1,\zeta}$ and $f_1$ must satisfy the equations
\begin{equation}
\label{eq:equation_Phi}
\p{\Phi_{1,\zeta}} (t, x_1, r) = \frac{1}{2} \left[ (\overline A_{\zeta})_1' (t,x_1, r, 0) -i  \sqrt{1 - \beta^2} (\overline A_{\zeta})'_\tau (t, x_1, r, 0) \right],
\end{equation}
and
\begin{equation}
\label{eq:equation_f1}
\p_{r} f_1(r) = -\frac{1}{2} \sqrt{1 - \beta^2} \tr H(\sqrt{1 - \beta^2} r).
\end{equation}
We obtain $f_1$ by integrating the right-hand side of \eqref{eq:equation_f1} with respect to the $r$-variable.
On the other hand, to solve the $\p$-equation \eqref{eq:equation_Phi}, we use the standard fundamental solution $E(x_1, r) = \frac{1}{\pi (x_1-i r)}$ of the operator $\p$ to obtain
\begin{equation}
\label{eq:fund_solution}
\Phi_{1,\zeta}(t,x_1,r) = \frac{1}{2} E\ast \left( (\overline A_{\zeta})_1'(t,x_1,\gamma(r)) - i \sqrt{1 - \beta^2} (\overline A_{\zeta})_\tau' (t, x_1, \gamma(r)) \right),
\end{equation}
see \cite[Section 5.4]{Friedlander_Joshi}.

We next verify that the smooth amplitude $a_0$ satisfies the approximate transport equation \eqref{eq:a_0eqn}. To this end, we estimate $a_0(\cdot;\zeta)$ and its first and second derivatives in the set $[0,T]\times J_{x_1} \times [r_0-\delta, r_0+\delta]$. Here $J_{x_1} \subset \R$ is an open and bounded interval containing the $x_1$-coordinates of all points in $Q$.

By Proposition \ref{prop:regularization}, the one-form $A_\zeta$ is supported in a bounded open subset of $\R^2 \times M_0^\mathrm{int}$. Thus, by arguing similarly as in \cite[Subsection 4.1.4]{Liu_Saksala_Yan}, there exists a compact set $K\subset \R^2$ such that the following estimates hold for every $(t, x_1, r) \in [0,T]\times J_{x_1} \times [0, \frac{1}{\sqrt{1-\beta^2}}]$:
\begin{equation}
\label{eq:est_Phi1}
\|\Phi_{1,\zeta}\|_{L^\infty} = \cO(1), \quad \|\nabla_{g_0} \Phi_{1,\zeta}\|_{L^\infty}, \|\p_{x_1} \Phi_{1,\zeta}\|_{L^\infty}=o(\zeta^{-1}), \quad \|\Delta_{g_0} \Phi_{1,\zeta}\|_{L^\infty}=o(\zeta^{-2}), \quad \zeta \to 0.
\end{equation}
Here the $L^\infty$-norms are measured over the set $[0,T]\times J_{x_1} \times [r_0-\delta, r_0+\delta]$.

We now set $\zeta = h^{\alpha}$, $\alpha \in (0,\frac{1}{2})$, and note that the coordinate change function $S$ is independent of $\zeta$. Therefore, by applying
\eqref{eq:est_diff_regu} and \eqref{eq:fund_solution}, in conjunction with the bound $\|f_1\|_{L^\infty}=\cO(1)$, we have the following estimates as $h\to 0$:
\begin{equation}
\label{eq:bound_deri_b0_h_old}
\begin{aligned}
&\|a_0(\cdot; h)\|_{L^\infty}=\cO(1),
\quad
\|\nabla_{g_0} a_0(\cdot; h)\|_{L^\infty},   \|\p_t a_0(\cdot; h)\|_{L^\infty},
\|\p_{x_1} a_0(\cdot; h)\|_{L^\infty} =o(h^{-\alpha}),
\\
&\|\Delta_{g_0} a_0(\cdot; h)\|_{L^\infty} =o(h^{-2\alpha}).
\end{aligned}
\end{equation}
Hence, by utilizing  \eqref{eq:dtau_phase}--\eqref{eq:a_0eqn_after}, together with \eqref{eq:bound_deri_b0_h_old}, we deduce from \eqref{eq:a_0eqn} that
\begin{align*}
& 2 \p_{x_1} a_0 - 2i \langle \nabla_{g_0} \psi, \nabla_{g_0} a_0 \rangle_{g_0} - i (\Delta_{g_0} \psi) a_0 -2 (\overline A_\zeta)_1 a_0 +2i \langle \overline A_\zeta(t, x_1, \cdot), \nabla_{g_0} \psi \rangle_{g_0} a_0
\\
&=  -2i \sqrt{1 - \beta^2}   H(\tau)   y \cdot \p_y a_0 +\cO(|y|^2)\p_\tau a_0+\cO(|y|^2)\p_ya_0 + \cO(|y|) + \cO(|y|h^{-\alpha})
\\
&=\cO(|y|h^{-\alpha}).
\end{align*}
This shows that the amplitude $a_0$ we have obtained in this step satisfies the approximate transport equation \eqref{eq:a_0eqn}.

\textit{Step 4: Verification of the estimate \eqref{eq:estimate_v}.}
Our next goal is to establish the estimates in  \eqref{eq:estimate_v} for the quasimode
\begin{equation}
\label{eq:beam_form_regu}
v_s(t, x_1, r, y;h)
=
e^{is\psi(\sqrt{1-\beta^2}r, y)}a'(t, x_1, r,y; h)
=
e^{is\psi(\sqrt{1-\beta^2}r, y)}h^{-\frac{n-2}{4}}a_0'(t, x_1, r; h)\chi(y/\delta')
\end{equation}
in the open subset $(0, T) \times J_{x_1} \times \Omega$ of $Q$, where $\Omega\subset M_0$ is the domain of Fermi coordinates near the point $z_0=\gamma(\tau_0)$. Let us first note that, due to the inequality \eqref{eq:imaginarypart_phi}, the following estimate holds  for any $k\geq 0$:
\begin{equation}
\label{eq:L2_power_y}
\begin{split}
\|h^{-\frac{n-2}{4}} |y|^k e^{-\frac{\Im \psi}{h} }\|_{L^2(|y| \le \delta'/2)}
&\leq
\|h^{-\frac{n-2}{4}} |y|^k e^{-\frac{d}{h}|y|^2}\|_{L^2(|y| \le \delta'/2)}
\\
& \leq  h^{k/2}
\bigg(\int_{\R^{n-2}} |z|^{2k} e^{-2d|z|^2}dz\bigg)^{1/2}
\\
&=\cO(h^{k/2}), \quad h \to 0.
\end{split}
\end{equation}
Here we have performed a change of variables $z= h^{-1/2}y$.

We are now ready to prove the estimates in \eqref{eq:estimate_v} locally near $z_0$. To establish the first one, we obtain from \eqref{eq:imaginarypart_phi}, \eqref{eq:bound_deri_b0_h_old}, and \eqref{eq:L2_power_y} with $k=0$ that
\begin{equation}
\label{eq:estimate_v_int}
\begin{aligned}
\|v_s\|_{L^2([0,T] \times \overline{J}_{x_1} \times \Omega)}
&\le
\|a_0'\|_{L^\infty([0,T] \times \overline{J}_{x_1} \times [r_0-\delta, r_0+\delta])}
\|e^{is\psi} h^{-\frac{n-2}{4}}\chi(y/\delta')\|_{L^2([0,T] \times \overline{J}_{x_1} \times \Omega)}
\\
&\le
\mathcal{O}(1) \|h^{-\frac{n-2}{4}}   e^{-\frac{d}{h} |y|^2}\|_{L^2(|y|\le \delta'/2)}
\\
&=\mathcal{O}(1), \quad h \to 0.
\end{aligned}
\end{equation}

We next show that
\begin{equation}
\label{eq:est_nablavs}
\|\nabla_g v_s\|_{L^2(Q)}= \cO(h^{-1}), \quad h\to 0.
\end{equation}
To this end, we compute directly from \eqref{eq:beam_form_regu} that
\[
\nabla_g v_s
=
h^{-\frac{n-2}{4}} e^{is\psi} \left[isa_0' \chi\left(\frac{y}{\delta'}\right) \nabla_g \psi + \chi\left(\frac{y}{\delta'}\right) \nabla_g a_0' + a_0' \frac{1}{\delta'} \nabla_{g_0} \chi\left(\frac{y}{\delta'}\right)\right].
\]
We observe that
\[
|e^{is\psi}|=e^{-\frac{1}{h} \Im \psi}.
\]
Since $e^{-\frac{1}{h}}=\cO(h^{\infty})$ as $h\to 0$, we deduce from \eqref{eq:imaginarypart_phi} that
$|e^{is\psi}|
\leq
e^{-\frac{\tilde d}{h}} $
for some constant $ \tilde d>0$ on $\supp (\nabla_{g_0}\chi(y/\delta'))$. Thus, an application of the estimates \eqref{eq:bound_deri_b0_h_old} and  \eqref{eq:L2_power_y} with $k=0$ gives us   \eqref{eq:est_nablavs}. Furthermore, we obtain the first estimate of  \eqref{eq:estimate_v} by combining \eqref{eq:estimate_v_int} and \eqref{eq:est_nablavs}.

We now move to verify the second estimate in \eqref{eq:estimate_v}. It requires us to estimate each term on the right-hand side of \eqref{eq:conjugated_L1}. For the $s^2$-term, it follows from
\eqref{eq:eikonel_eq},  \eqref{eq:imaginarypart_phi}, \eqref{eq:bound_deri_b0_h_old}, and \eqref{eq:L2_power_y} with $k=3$ that
\begin{equation}
\label{eq:est_first_term}
\begin{aligned}
&h^2\|e^{is\psi}s^2 (\langle \nabla_{g_0}\psi, \nabla_{g_0}\psi\rangle_{g_0} - (1 - \beta^2))a\|_{L^2([0,T] \times \overline{J}_{x_1} \times \Omega)}
\\
&=
h^2\|e^{is\psi} s^2 (\langle \nabla_{g_0}\psi, \nabla_{g_0}\psi\rangle_{g_0} - (1 - \beta^2)) h^{-\frac{n-2}{4}}a_0'\chi(y/\delta')\|_{L^2([0,T] \times \overline{J}_{x_1} \times \Omega)}
\\
&\le
\cO(1) \|a_0'\|_{L^\infty([0,T] \times \overline{J}_{x_1} \times [r_0-\delta, r_0+\delta])} \|h^{-\frac{n-2}{4}} |y|^3 e^{-\lambda d|y|^2}\|_{L^2(|y| \le \delta'/2)} =\cO(h^{3/2}), \quad  h \to 0.
\end{aligned}
\end{equation}

Turning our attention to the $s$-term,
by applying  the estimate  \eqref{eq:L2_power_y} with $k=1$, as well as the triangle inequality, we obtain
\begin{align*}
&h^2 \|e^{is\psi}s\left( 2 \p_{x_1} a - 2i \langle \nabla_{g_0} \psi, \nabla_{g_0} a \rangle_{g_0} - i (\Delta_{g_0} \psi) a  -2 (\overline A_\zeta)_1 a +2i \langle \overline A_\zeta(x_1, \cdot), \nabla_{g_0} \psi \rangle_{g_0} a \right) \|_{L^2([0, T]\times  \overline{J}_{x_1} \times \Omega)}
\\
&\le \cO(h) \|e^{is\psi} h^{-\frac{n-2}{4}} [|y|h^{-\alpha}\chi(y/\delta')-2i\n{\nabla_{g_0}\psi, \nabla_{g_0}\chi(y/\delta')}_{g_0}]\|_{L^2([0, T]\times  \overline{J}_{x_1} \times \Omega)}
\\
& \le \cO(h^{1-\alpha}) \|h^{-\frac{n-2}{4}} |y|  e^{-\frac{d}{h}|y|^2}\|_{L^2(|y| \le \delta'/2)} +\cO(e^{-\frac{\tilde{d}}{h}})
\\
&=\cO(h^{3/2-\alpha})
\\
&=o(h), \quad  h \to 0,
\end{align*}
where we have utilized the fact that $\alpha\in (0,\frac{1}{2})$ in the last step.

Let us note that the conjugated equation \eqref{eq:conjugated_L1} contains the convection term $A$, while its smooth approximation $A_\zeta$ appears in the estimate above. To account for this discrepancy, we apply the estimates \eqref{eq:est_diff_regu},  \eqref{eq:L2_power_y} with $k=0$, and the triangle inequality to deduce that
\begin{align*}
&h^2\|e^{is\psi} 2s [(\overline A_\zeta)_1-\overline A_1 + i \n{\overline  A(x_1, \cdot) - \overline A_\zeta (x_1, \cdot), \nabla_{g_0} \psi}_{g_0}]a\|_{L^2([0, T]\times  \overline{J}_{x_1} \times \Omega)}
\\
&\le \cO(h)\|A-A_\zeta\|_{L^\infty([0, T]\times  \overline{J}_{x_1} \times \Omega)} \|h^{-\frac{n-2}{4}} e^{-\frac{d}{h} |y|^2}\|_{L^2(|y| \le \delta'/2)}
\\
&=o(h),\quad h \to 0.
\end{align*}
Therefore, it follows from the previous two computations that
\begin{equation}
\label{eq:est_s_term}
\begin{aligned}
&h^2\|e^{is\psi}s\left( 2 \p_{x_1} a - 2i \langle \nabla_{g_0} \psi, \nabla_{g_0} a \rangle_{g_0} - i (\Delta_{g_0} \psi) a -2 \overline A_1 a +2i \langle \overline A(x_1, \cdot), \nabla_{g_0} \psi \rangle_{g_0} a \right) \|_{L^2([0, T]\times  \overline{J}_{x_1} \times \Omega)}
\\
&=
o(h), \quad h \to 0.
\end{aligned}
\end{equation}

It still remains to $\mathcal{L}^\ast_{g,A,q}a$ on the right-hand side of \eqref{eq:conjugated_L1}, where the operator $\mathcal{L}^\ast_{g,A,q}$ is given by \eqref{eq:adjoint}. First, for  the $\p_t$-term, due to the time-independence of the cut-off function $\chi$, it follows from the estimates \eqref{eq:bound_deri_b0_h_old} and \eqref{eq:L2_power_y} with $k=0$ that
\begin{equation}
\label{eq:est_dtsquare}
\begin{aligned}
h^2\|e^{is\psi}\p_t a\|_{L^2([0, T]\times  \overline{J}_{x_1} \times \Omega)}
&= \cO(h^2) \|e^{is\psi} h^{-\frac{n-2}{4}}\p_t a_0'\|_{L^2([0, T]\times  \overline{J}_{x_1} \times \Omega)}
\\
&=o(h^{2-\alpha})
\\
&= o(h^{3/2}), \quad h \to 0.
\end{aligned}
\end{equation}
We next estimate the term involving   $\Delta_g$. To this end, we apply the estimates   \eqref{eq:bound_deri_b0_h_old} and    \eqref{eq:L2_power_y} with $k=0$, as well as the triangle inequality, to get
\begin{equation}
\label{eq:est_laplacian}
\begin{aligned}
&h^2 \|e^{is\psi}(-\Delta_g a)\|_{L^2([0, T]\times  \overline{J}_{x_1} \times \Omega)}
\\
\le &
\cO(h^2) \|h^{-\frac{n-2}{4}} e^{is\psi}  \chi(y/\delta')\Delta_g a_0'\|_{L^2([0, T]\times  \overline{J}_{x_1} \times \Omega)}
\\
&+
\cO(h^2) \|h^{-\frac{n-2}{4}} e^{is\psi} [a_0' \Delta_g  \chi(y/\delta')+2\n{\nabla_g a_0', \nabla_g \chi(y/\delta')}_g]\|_{L^2([0, T]\times  \overline{J}_{x_1} \times \Omega)}
\\
\le &
\cO(h^2)  \|h^{-\frac{n-2}{4}} e^{-\frac{d}{h} |y|^2} h^{-2\alpha}\|_{L^2(|y| \le \delta'/2)} + \cO(e^{-\frac{\tilde{d}}{h}})
\\
= &\cO(h^{2-2\alpha})+\cO(e^{-\frac{\tilde{d}}{h}})
\\
= &
o(h), \quad h \to 0.
\end{aligned}
\end{equation}
To estimate the first order term in \eqref{eq:adjoint}, by the same computations as above, we have
\begin{equation}
\label{eq:est_first_order}
\begin{aligned}
&h^2 \|e^{is\psi}\n{\overline A, da}_g\|_{L^2([0, T]\times  \overline{J}_{x_1} \times \Omega)}
\\
&\le
\cO(h^2) \|e^{is\psi}h^{-\frac{n-2}{4}}\n{\overline A, da_0'}_g\|_{L^2([0, T]\times  \overline{J}_{x_1} \times \Omega)} +
\cO(h^2) \|e^{is\psi}h^{-\frac{n-2}{4}}\n{\overline A,a_0' d\chi(y/\delta')}_g\|_{L^2([0, T]\times  \overline{J}_{x_1} \times \Omega)}
\\
&\le
o(h^{2-\alpha}) + \cO(e^{-\frac{\tilde{d}}{h}})
\\
&=o(h^{3/2}), \quad h \to 0.
\end{aligned}
\end{equation}
Furthermore,  the same reasoning yields that
\begin{equation}
\label{eq:est_divergence}
\begin{aligned}
&h^2\|e^{is\psi}d^\ast(\overline Aa)\|_{L^2([0, T]\times  \overline{J}_{x_1} \times \Omega)}
\\
\le &
\cO(h^2)\|e^{is\psi} h^{-\frac{n-2}{4}} a_0 \chi(y/\delta') d^\ast \overline A\|_{L^2([0, T]\times  \overline{J}_{x_1} \times \Omega)}
+
\cO(h^2)\|e^{is\psi} h^{-\frac{n-2}{4}} \chi(y/\delta') \overline A\cdot \nabla_g a_0'\|_{L^2([0, T]\times  \overline{J}_{x_1} \times \Omega)}
\\
&+
\cO(h^2)\|e^{is\psi} h^{-\frac{n-2}{4}} a_0' \overline A\cdot \nabla_g \chi(y/\delta') \|_{L^2([0, T]\times  \overline{J}_{x_1} \times \Omega)}
\\
\le &
\cO(h^2) + o(h^{2-\alpha}) + \cO(e^{-\frac{\tilde{d}}{h}})
\\
= &
o(h^{3/2}), \quad h \to 0.
\end{aligned}
\end{equation}
Lastly, for the zeroth order term, we obtain from the estimate \eqref{eq:bound_deri_b0_h_old}  that
\begin{equation}
\label{eq:est_first_and_zero}
h^2\|e^{is\psi}(-\langle \overline A, \overline A \rangle_g+\overline{q})a\|_{L^2([0, T]\times  \overline{J}_{x_1} \times \Omega)}
=o(h^{2-\alpha})=o(h^{3/2}), \quad h \to 0.
\end{equation}
Hence, by combining the estimates \eqref{eq:est_dtsquare}--\eqref{eq:est_first_and_zero}, we get
\begin{equation}
\label{eq:est_op}
\|h^2\mathcal{L}^{*}_{g, A, q} a\|_{L^2([0, T]\times  \overline{J}_{x_1} \times \Omega)}=o(h), \quad h \to 0.
\end{equation}
Furthermore, due to the estimates \eqref{eq:est_first_term}, \eqref{eq:est_s_term}, and \eqref{eq:est_op}, we conclude from  \eqref{eq:conjugated_L1} that
\begin{equation}
\label{eq:est_op_regu}
\|e^{s^2 \beta^2 t + s x_1} h^2 \mathcal{L}^{*}_{g, A, q} e^{-s^2\beta^2 t - s x_1}v_s\|_{L^2([0, T]\times  \overline{J}_{x_1} \times \Omega)}=o(h), \quad h \to 0.
\end{equation}
The proof of the estimate \eqref{eq:estimate_v} locally on the set $[0, T]\times  \overline{J}_{x_1} \times \Omega$ is now complete.

We next derive an estimate for $\|v_s(t, x_1, \cdot)\|_{L^2(\pM_0)}$ for later purposes. By similar arguments as in \cite[Subsection 4.1.5]{Liu_Saksala_Yan}, as well as \cite[Section 5]{Krupchyk_Uhlmann_magschr}, we obtain
\begin{equation}
\label{eq:est_v_bdyM}
\begin{aligned}
\|v_s(t, x_1, \cdot)\|^2_{L^2(\pM_0 \cap \Omega)}  =\cO(1), \quad h \to 0.
\end{aligned}
\end{equation}

\textit{Step 5: Global construction.} Finally, we patch together the quasimodes constructed along small segments of the geodesic $\gamma$ to produce a global quasimode $v_s$ in
$\R^2\times M_0^{\mathrm{int}}$. We shall closely follow the arguments in \cite[Subsection 4.1.6]{Liu_Saksala_Yan} and present the proof for completeness.

Since $\hat{M}_0$ is compact and $\gamma(r):(-2\varepsilon, \frac{L}{\sqrt{1-\beta^2}}+2\varepsilon)\to \hat{M}_0$ is a non-tangential geodesic that does not form a loop,
due to  \cite[Lemma 7.2]{Kenig_Salo} and the choice of $\varepsilon>0$, the geodesic segment $\gamma|_{(-2\varepsilon, \frac{L}{\sqrt{1-\beta^2}}+2\varepsilon)}$
has finitely many self-intersection times $r_\ell\geq 0$, $\ell \in \{1,\ldots,R\}$, and
\[
-\varepsilon =r_0< r_1< \dots<r_R<r_{R+1} =\frac{L}{\sqrt{1-\beta^2}}+\varepsilon.
\]
By
\cite[Lemma 7.4]{Kenig_Salo},
there exists an open cover $\{(\Omega_\ell, \kappa_\ell)_{\ell=0}^{R+1}\}$ of the segment $\gamma([-\varepsilon, \frac{L}{\sqrt{1-\beta^2}}+\varepsilon])$ consisting of Fermi coordinate neighborhoods that satisfy the following properties:
\begin{enumerate}
\item[(1)] $ \kappa_\ell(\Omega_\ell)=I_\ell\times B$, where $I_\ell$ are open intervals and $B=B(0, \delta')$ is an open ball in $\R^{n-2}$. Here $\delta'>0$ can be taken arbitrarily small and the same for each $\Omega_\ell$.
\item[(2)] $ \kappa_\ell(\gamma(r))=(r, 0)$ for $r \in I_\ell$.
\item[(3)] $r_\ell$ only belongs to $I_\ell$ and $\overline{I_\ell}\cap \overline{I_k}=\emptyset$ unless $|\ell-k|\le 1$.
\item[(4)] $ \kappa_\ell= \kappa_k$ on $\kappa_\ell^{-1}((I_\ell\cap I_k)\times B)$.
\end{enumerate}
In particular,
the intervals $I_0$ and $I_{R+1}$ are chosen such that that they do not contain any self-intersection times.
On the other hand, if $\gamma$ has no self-intersections, then a single coordinate neighborhood of $\gamma|_{[-\varepsilon, \frac{L}{\sqrt{1-\beta^2}}+\varepsilon]}$ satisfies the properties (1) and (2).

We now move to construct $v_s$ globally. Let us first assume that $\gamma$ does not self-intersect at $r=0$. Arguing as in the earlier part of the proof, we obtain a quasimode
\[
v_s^{(0)}(t,x_1, r, y;h)=h^{-\frac{n-2}{4}}e^{is\psi^{(0)}(\sqrt{1-\beta^2}r, y)}e^{\Phi_{1, h}(t,x_1, r)+f_1(r)}\chi(y/\delta')
\]
in $\Omega_0$ with some fixed initial conditions at $r=-\varepsilon$ for the Riccati equation \eqref{eq:Riccati} that determines $\psi^{(0)}$.
We next choose $r_0'$ such that $\gamma(r_0')\in \Omega_0\cap \Omega_1$ and fix the initial conditions for
$\psi^{(1)}$ such that $\psi^{(1)}(r_0')=	\psi^{(0)}(r_0')$. This gives us a  quasimode in $\Omega_1$ of the form
\[
v_s^{(1)}(t,x_1, r, y;h)=h^{-\frac{n-2}{4}}e^{is\psi^{(1)}(\sqrt{1-\beta^2}r, y)}e^{\Phi_{1,h}(t,x_1, r)+f_1(r)}\chi(y/\delta').
\]
Let us remark that we have used the same functions $\Phi_{1, h}$ and $f_1$ in both $v_s^{(0)}$ and $v_s^{(1)}$, which is possible since both $\Phi_{1, h}$ and $f_1$ are  globally defined for all $r\in (-2\varepsilon, \frac{L}{\sqrt{1-\beta^2}}+2\varepsilon)$ and are independent of $y$.
Moreover, the phase functions $\psi^{(0)}$ and $\psi^{(1)}$ solve the same equations with identical  initial data in $\Omega_0$ and $\Omega_1$, and the local coordinates $ \kappa_0$ and $ \kappa_1$ agree on $\kappa_0^{-1}((I_0\cap I_1)\times B)$. Thus, it follows that
$\psi^{(1)}=\psi^{(0)}$ in $\Omega_0\cap \Omega_1$.
Hence, we see that
\[
v_s^{(0)}=v_s^{(1)} \quad \text{ in } \Omega_0\cap \Omega_1.
\]
Proceeding similarly, we obtain quasimodes $v_s^{(2)}, \dots, v_s^{(R+1)}$ such that
\begin{equation}
\label{eq:quasi_corres}
v_s^{(\ell)}(t, x_1, \cdot)=v_s^{(\ell+1)}(t, x_1, \cdot) \quad \text{in} \quad \Omega_\ell\cap \Omega_{\ell+1}
\end{equation}
for all $t$ and $x_1$. In the case where $\gamma$ self-intersects at $r=0$, we shall begin with the construction of $v^{(1)}$ by fixing initial conditions for the Riccati equation \eqref{eq:Riccati} at $r=0$ and find $v^{(0)}$ by going backwards.

Let $\chi_j(r)$ be a partition of unity subordinate to the intervals $(I_\ell)_{\ell=0}^{R+1}$, and let $\tilde \chi_\ell(t,x_1,r, y)=\chi_\ell(r)$. We define a smooth function
\[
v_s=\sum_{\ell=0}^{R+1} \tilde{\chi}_\ell v_s^{(\ell)} \quad \text{in $\R^2\times \hat M_0$.}
\]
Let $z_1, \dots, z_{R'}\in M_0$, $R'< R$,
be the distinct self-intersection points of $\gamma$ that correspond to the self-intersection times $0\le r_1<\cdots<r_R$. Let $V_j$, $j=1, \dots, R'$, be a small neighborhood of $z_j$ in $\hat{M}_0$. Arguing as in \cite[Proposition 7.5]{Kenig_Salo} and utilizing the definition of the intervals $I_0,\ldots, I_{R+1}$ together with \eqref{eq:quasi_corres}, we choose a finite cover $W_0,\ldots W_S$ of the remaining portion of the geodesic $\gamma$ such that each $W_k\subset \Omega_{\ell(k)}$ for some $\ell(k)\in \{0,\ldots,R+1\}$, $k=1,\ldots,S$. Consequently, we get the following open cover of the support:
\[
\supp(v_s(t,x_1, \cdot))\cap M_0 \subset \left(\bigcup_{j=1}^{R'}V_j\right) \cup \left(\bigcup_{k=0}^{S}W_{k}\right).
\]
Furthermore,  the restriction of the quasimode $v_s$ to  $V_j$ and $W_{k}$ can be written as
\begin{equation}
\label{eq:finite_sum_v}
v_s(t,x_1,\cdot)|_{V_j}=\sum_{\ell: \gamma(r_\ell)=z_j}v_s^{(\ell)}(t,x_1,\cdot)
\quad \text{ and } \quad
v_s(t,x_1, \cdot)|_{W_{k}}=v_s^{(\ell(k))}(t,x_1, \cdot),
\end{equation}
respectively. This representation follows from the construction of the intervals  $(I_\ell)_{\ell=0}^{R+1}$ and the associated partition of unity, as well as choosing the set $V_j$ sufficiently small.

It still remains to verify that the estimates in \eqref{eq:estimate_v} hold globally. To that end, in view of \eqref{eq:finite_sum_v}, the quasimode $v_s$ is a finite sum of $v^{(\ell)}$. Therefore, both estimates in \eqref{eq:estimate_v} follow directly from the corresponding local bounds \eqref{eq:estimate_v_int} and \eqref{eq:est_op_regu}. Moreover, the estimate
\[
\|v_s(t,x_1,\cdot)\|_{L^2(\p M_0)}=\cO(1)
\]
follows from the estimate \eqref{eq:est_v_bdyM} applied to each $v_s^{(\ell)}$.
The construction of the Gaussian beam quasimode $v_s$ is now complete.

We next construct a Gaussian beam solution for the  operator $e^{-\frac{1}{h^2} \beta^2 t - \frac{1}{h} x_1} h^2\mathcal{L}_{g, A, q} e^{\frac{1}{h^2} \beta^2 t + \frac{1}{h} x_1}$. As the argument is very similar to the previous construction, we omit the common steps and highlight only the differences.

Consider the ansatz $w_s$ of the form
\[
w_s(t,x_1, \tau, y;h) = e^{is\psi(\tau, y)} b(t,x_1,\tau, y;h),
\]
where $\psi$ is a phase function, and the amplitude $b$ depends implicitly on the semiclassical parameter $h$.
Using \eqref{eq:computation_time_derivative}--\eqref{eq:computation_first_order_2}, we obtain
\begin{equation}
\label{eq:conjugated_L2}
\begin{aligned}
&e^{-s^2 \beta^2 t - s x_1} \mathcal{L}_{g, A, q} e^{s^2 \beta^2 t + s x_1} \left( e^{is\psi(\tau, y)} b \right)
\\
=&
e^{is\psi} \left[ s^2 \left( \langle \nabla_{g_0} \psi, \nabla_{g_0} \psi \rangle_{g_0} - (1 - \beta^2) \right) b
\right.
\\
& \left. +
s \left( -2 \p_{x_1} b - 2i \langle \nabla_{g_0} \psi, \nabla_{g_0} b \rangle_{g_0} - i (\Delta_{g_0} \psi) b - 2A_1b -2i \langle A(x_1, \cdot), \nabla_{g_0} \psi \rangle_{g_0} b \right) + \mathcal{L}_{g,A, q} b \right].
\end{aligned}
\end{equation}
As the eikonal equation is the same as the one in \eqref{eq:conjugated_L1}, the phase function $\psi(\tau, y)$ is given by \eqref{eq:phase_form}.

We next seek an amplitude $b$ of the form
\begin{equation}
b(t, x_1, \tau, y, h; \zeta) = h^{-\frac{n-2}{4}} b_0(t, x_1, \tau; \zeta) \chi\left( \frac{y}{\delta'} \right),
\end{equation}
where $b_0 \in C^\infty(\mathbb{R}_t \times \mathbb{R}_{x_1} \times [\tau_0 - \delta, \tau_0 + \delta])$ is independent of $y$. Arguing similarly as in the construction of $a_0$, we see that $b_0$ satisfies the transport equation

\begin{align*}
&\left(\p_{x_1} + i \sqrt{1 - \beta^2} \p_\tau \right)b_0
\\
&= -\frac{1}{2}\left[i \sqrt{1 - \beta^2}\tr H(\tau) + 2 (A_\zeta)_1(t,x_1,\tau,0) + 2i  \sqrt{1 - \beta^2} (A_\zeta)_\tau (t,x_1,\tau,0)\right]b_0.
\end{align*}
Using the change of variable function $S(\tau) = \sqrt{1-\beta^2}\tau$ again, we get
\begin{equation}
\label{eq:transport_b0_new}
\begin{aligned}
&\left(\p_{x_1} + i \p_r\right) b'_0
\\
&= -\frac{1}{2} \left[i \sqrt{1 - \beta^2} \tr H(\sqrt{1 - \beta^2} r) + 2 (A_\zeta)_1 (t,x_1, r, 0) + 2i \sqrt{1 - \beta^2} (A_\zeta)_\tau (t, x_1, r, 0) \right] b'_0,
\end{aligned}
\end{equation}
where $b'_0 = b_0 \circ S$, $(A_\zeta)_1' = (A_\zeta)_1 \circ S$, and
$(A_\zeta)_\tau' = (A_\zeta)_\tau \circ S$.

We next write  $\overline{\p} = \frac {1}{2} \left(\p_{x_{1}} + i \p_r\right)$ and look for a solution to \eqref{eq:transport_b0_new} of the form
\begin{equation}
b'_0 (t, x_1, r; \zeta) = e^{\Phi_{2,\zeta} (t, x_1, r) + f_2(r)}\eta(t,x_1,r)
\end{equation}
such that $\overline{\p}\eta=0$. Then $\Phi_{2,\zeta}$ and $f_2$ must satisfy the following equations, respectively:

\begin{align*}
\overline{\p}{\Phi_{2,\zeta}} (t, x_1, r) = -\frac{1}{2} \left[ (A_\zeta)_1 (t,x_1, r, 0) + i \sqrt{1 - \beta^2} (A_\zeta)_\tau (t, x_1, r, 0) \right],
\end{align*}
and
\begin{align*}
\p_{r} f_2(r) = -\frac{1}{2} \sqrt{1 - \beta^2} \tr H(\sqrt{1 - \beta^2} r).
\end{align*}
From here, we utilize the same arguments as in the construction of $v_s$ to obtain a Gaussian beam quasimode $w_s\in C^\infty(Q)$ satisfying the estimates in \eqref{eq:estimate_w}. This completes the proof of Theorem \ref{prop:Gaussian_beam}.
\end{proof}

\subsection{Concentration property of   Gaussian beam quasimodes}
\label{Ssec:concetration}

We have established that, for each non-tangential geodesic $ \gamma\colon[0, \frac{L}{\sqrt{1-\beta^2}}]\to M_0$ and  $0<h \ll 1$, there exist smooth functions $\Phi_{1,h},\Phi_{2,h}$ in $[0,T] \times \overline{J}_{x_1} \times [0, \frac{L}{\sqrt{1-\beta^2}}]$ satisfying the equations
\[
(\p_{x_1}-i\p_r) \Phi_{1,h}(t, x_1, r) = (\overline A_{h})_1' (t,x_1, r, 0) -i  \sqrt{1 - \beta^2} (\overline A_{h})'_\tau (t, x_1, r, 0) ,
\]
and
\[
(\p_{x_1}+i\p_r) \Phi_{2,h}(t, x_1, r)=-(A_h)'_1 (t,x_1, r, 0) - i \sqrt{1 - \beta^2} (A_h)'_\tau (t, x_1, r, 0),
\]
respectively, where $A_h'=A_h \circ S$, and the change of coordinates function $S(\tau) = \sqrt{1-\beta^2}\tau$.

By arguing similarly to the proof of \cite[Lemma 4.3]{Liu_Saksala_Yan}, we obtain the following result describing the behavior of these functions as $\zeta \to 0$.
\begin{lem}
\label{lem:Phi_nonregularized}
Let $\beta \in (\frac{1}{\sqrt{3}},1)$, and let $\gamma\colon[0, \frac{L}{\sqrt{1-\beta^2}}]\to M_0$ be a non-tangential geodesic in $(M_0,g_0)$ as in Theorem \ref{prop:Gaussian_beam}. Then there exist functions $\Phi_1$, $\Phi_2\in  C([0,T] \times \overline{J}_{x_1} \times [0, \frac{L}{\sqrt{1-\beta^2}}])$ that satisfy the equations

\[
(\p_{x_1}-i\p_r) \Phi_{1}(t, x_1, r) = \overline A_1' (t,x_1, r, 0) -i  \sqrt{1 - \beta^2} \overline A'_\tau (t, x_1, r, 0) ,
\]
and
\[
(\p_{x_1}+i\p_r) \Phi_{2}(t, x_1, r)=-A'_1 (t,x_1, r, 0) - i \sqrt{1 - \beta^2} A'_\tau (t, x_1, r, 0),
\]
respectively. Furthermore, the function $\Phi_j$, $j=1,2$, satisfies the following estimate:
\begin{equation}
\label{eq:bound_diff_phi}
\|\Phi_{j, h}-\Phi_j\|_{L^\infty([0,{T}] \times \overline{J}_{x_1} \times [0, \frac{L}{\sqrt{1-\beta^2}}])}=o(1), \quad \zeta \to 0.
\end{equation}
\end{lem}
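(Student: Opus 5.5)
The plan is to define $\Phi_j$ directly by the same Cauchy-kernel formula used for $\Phi_{j,h}$, with the regularized one-form replaced by $A$ itself, and then to transfer the $L^\infty$ convergence $A_\zeta\to A$ through the convolution. I treat $\Phi_1$; the case of $\Phi_2$ is identical after replacing $\p$ by $\overline{\p}$ and the kernel $E$ by its complex conjugate $\frac{1}{\pi(x_1+ir)}$.

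First I extend the right-hand side to a compactly supported function in the plane. By Proposition \ref{prop:regularization}, $A$ is extended to a compactly supported one-form in $W^{1,\infty}(\R^2\times M_0^{\mathrm{int}})$, and the $A_\zeta$ are supported in a fixed bounded set $W$. Since $\gamma$ has been extended past $[0,\frac{L}{\sqrt{1-\beta^2}}]$, I multiply the data
\[
G(t,x_1,r)=\overline A_1'(t,x_1,r,0)-i\sqrt{1-\beta^2}\,\overline A_\tau'(t,x_1,r,0)
\]
and the corresponding regularized data $G_\zeta$ by a fixed cutoff $\rho(r)$. This cutoff equals $1$ on a neighborhood of $[0,\frac{L}{\sqrt{1-\beta^2}}]$ and is supported in $(-2\varepsilon,\frac{L}{\sqrt{1-\beta^2}}+2\varepsilon)$. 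This is the same convention under which \eqref{eq:fund_solution} defines $\Phi_{1,\zeta}$. In the $(x_1,r)$ variables, $G$ and $G_\zeta$ are then supported in a fixed compact set $K\subset\R^2$, uniformly in $t\in[0,T]$ and in $\zeta$.

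Next I set $\Phi_1(t,\cdot)=E\ast(\rho G(t,\cdot))$, with the same normalization as in \eqref{eq:fund_solution}, so that $(\p_{x_1}-i\p_r)\Phi_1=G$ in the distributional sense on the region where $\rho=1$. This uses that $E$ is a fundamental solution of $\p$, following \cite[Section 5.4]{Friedlander_Joshi}. For continuity, note that $E\in L^1_{\mathrm{loc}}(\R^2)$ and that $\rho G\in L^\infty$ has uniformly compact support. Only the restriction of $E$ to the bounded set $K-K'$ therefore matters, where $K'\supset \overline{J}_{x_1}\times[0,\frac{L}{\sqrt{1-\beta^2}}]$ is compact. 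Continuity in $(x_1,r)$ follows from continuity of translation in $L^1$ applied to $E\chi_{K-K'}$. Continuity in $t$ follows from the Lipschitz dependence of $A$ on $t$, since $A\in W^{1,\infty}$. Hence $\Phi_1\in C([0,T]\times\overline J_{x_1}\times[0,\frac{L}{\sqrt{1-\beta^2}}])$.

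The key estimate is then
\[
\|\Phi_{1,\zeta}-\Phi_1\|_{L^\infty}\le \tfrac12\|E\|_{L^1(K-K')}\,\|\rho(G_\zeta-G)\|_{L^\infty}\le C\|A_\zeta-A\|_{L^\infty}=o(1),
\]
which follows from Young's inequality and the first estimate in \eqref{eq:est_diff_regu}. Restricting $A_\zeta-A$ to $\gamma$ (that is, to $y=0$) causes no difficulty, because the bound in \eqref{eq:est_diff_regu} is a sup bound over the whole set. I therefore expect no genuine obstacle. The only delicate point is to ensure that the cutoff $\rho$ and the support set $K$ are independent of $\zeta$ and identical in the definitions of $\Phi_{1,\zeta}$ and $\Phi_1$, so that the kernel bound is uniform. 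With $\zeta=h^\alpha$, this yields \eqref{eq:bound_diff_phi}.
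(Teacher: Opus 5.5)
Your proposal is correct and follows essentially the same route as the paper, which defers to the argument of \cite[Lemma 4.3]{Liu_Saksala_Yan}. That argument defines $\Phi_j$ as the Cauchy transform of the unregularized data restricted to $\gamma$. Since the kernel is locally integrable and the data are supported in a fixed compact set, it gives $\|\Phi_{j,h}-\Phi_j\|_{L^\infty}\lesssim \|A_\zeta-A\|_{L^\infty}=o(1)$ by \eqref{eq:est_diff_regu}. Your cutoff $\rho$ is harmless, because $A$ and $A_\zeta$ are compactly supported in $\R^2\times M_0^{\mathrm{int}}$ and therefore already vanish along $\gamma$ outside $[0,\frac{L}{\sqrt{1-\beta^2}}]$. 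Hence $\rho G$ coincides with the zero extension used implicitly in \eqref{eq:fund_solution}.
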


In the following proposition, we show that the  Gaussian beam quasimodes obtained in Theorem \ref{prop:Gaussian_beam} concentrate along the geodesic when $h \to 0$.

\begin{prop}
\label{prop:limit_behavior}
Let $(M,g)$ be a CTA manifold. Let $0<h\ll1$ be a small parameter, $\beta \in (\frac{1}{\sqrt{3}},1)$, and $s=\frac{1}{h}$. Suppose that  $\gamma\colon[0, \frac{L}{\sqrt{1-\beta^2}}]\to M_0$ is a non-tangential geodesic in the transversal manifold $(M_0,g_0)$. Let $J_{x_1}$ be an open and bounded interval such that the  $x_1$-coordinate of each point in $Q$ lies in $J_{x_1}$.
Let $v_s$ and $w_s$ be the Gaussian beam quasimodes from Theorem \ref{prop:Gaussian_beam}. Then for each function $\phi \in C(M_0)$ and
$(t,x_1)\in  [0,{T}]\times \overline{J}_{x_1}$,
we have
\begin{equation}
\label{eq:limit_prod_vw}
\begin{aligned}
&\lim_{h \to 0} \int_{M_0} \overline{v}_s (t,x_1, \cdot)w_s(t,x_1, \cdot) \phi   dV_{g_0}
=
(1-\beta^2)^{-\frac{n-6}{4}}\int_{0}^{\frac{L}{\sqrt{1-\beta^2}}} e^{\overline{\Phi_1}(t,x_1, r)+\Phi_2(t,x_1, r)}
\eta(t,x_1,r)\phi(\gamma(r))dr.
\end{aligned}
\end{equation}
Furthermore, for any one-form $A\in C(M, T^\ast M)$, we have
\begin{equation}
\label{eq:concentration_one_form_ws}
\begin{aligned}
&\lim_{h \to 0} h \int_{M_0} \n{A, dw_s(t,x_1, \cdot) }_g\overline{v}_s (t,x_1, \cdot)\phi   dV_{g_0}
\\
&=(1-\beta^2)^{-\frac{n-8}{4}}\int_{0}^{\frac{L}{\sqrt{1-\beta^2}}}  iA(\dot \gamma(r)) e^{\overline{\Phi_1}(t,x_1, r)+\Phi_2(t,x_1, r)}
\eta(t,x_1,r)\phi(\gamma(r))dr.
\end{aligned}
\end{equation}
Here the functions $\Phi_1, \Phi_2\in C( [0,{T}] \times \overline{J}_{x_1} \times [0, \frac{L}{\sqrt{1-\beta^2}}])$ are as in Lemma \ref{lem:Phi_nonregularized},

and $\eta \in C^\infty( [0,T] \times \overline{J}_{x_1} \times [0, \frac{L}{\sqrt{1-\beta^2}}])$ with $(\p_{x_1}+i\p_r)\eta=0$.
\end{prop}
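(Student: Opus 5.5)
The plan is to localize with the structure of Step 5 of the proof of Theorem \ref{prop:Gaussian_beam}. First reduce to the Fermi coordinate patches. By \eqref{eq:finite_sum_v}, on each $W_k$ the product $\overline{v}_s w_s$ equals $\overline{v_s^{(\ell(k))}} w_s^{(\ell(k))}$. On a neighborhood $V_j$ of a self-intersection point it is a finite sum of products $\overline{v_s^{(\ell)}} w_s^{(\ell')}$. For $\ell\neq\ell'$ the two beams travel along transversal directions through $z_j$, so their phases $\psi^{(\ell)}$ and $\psi^{(\ell')}$ have different real gradients at $z_j$. Then $-s\overline{\psi^{(\ell)}}+s\psi^{(\ell')}$ is non-stationary, and the standard non-stationary phase argument from \cite[Proposition 7.5]{Kenig_Salo} and \cite{Liu_Saksala_Yan} makes these cross terms $o(1)$. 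Off the support of the cutoffs the beams are $\cO(e^{-\tilde d/h})$, and the partition of unity $\chi_\ell$ sums to one along $\gamma$. It therefore suffices to compute the diagonal contribution in a single chart $\Omega$ with coordinates $(r,y)$.

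In such a chart, write $\overline v_s w_s = h^{-\frac{n-2}{2}} e^{-2s\Im\psi}\,\overline{a_0'}\,b_0'\,\chi(y/\delta')^2$. The real parts of the phase cancel because both beams use the same $\psi$. From \eqref{eq:phase_form}, $2\Im\psi = \sqrt{1-\beta^2}\,\Im H(\sqrt{1-\beta^2}r)y\cdot y$. The volume form is $dV_{g_0} = \sqrt{1-\beta^2}\,(1+\cO(|y|^2))\,dr\,dy$, and $\phi$ is continuous. Substitute $y=h^{1/2}z$ and apply dominated convergence, using the uniform bounds \eqref{eq:bound_deri_b0_h_old} and the convergence $\Phi_{j,h}\to\Phi_j$ from Lemma \ref{lem:Phi_nonregularized}. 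The $y$-integral becomes the Gaussian integral
$$\int_{\R^{n-2}} e^{-\sqrt{1-\beta^2}\,\Im H(\cdot) z\cdot z}\,dz = \pi^{\frac{n-2}{2}}(1-\beta^2)^{-\frac{n-2}{4}}\det(\Im H)^{-1/2}.$$

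The determinant factor is cancelled by $e^{\overline{f_1}+f_2}=e^{2\Re f_1}$, since $f_2=f_1$. This uses the classical identity from the Riccati equation \eqref{eq:Riccati}: $\det(\Im H(\tau))\,|\det Y(\tau)|^2$ is constant, where $H=Y'Y^{-1}$ and $e^{-\frac12\int\tr H}=(\det Y)^{-1/2}$. I would normalize the initial data, absorbing constants, so that the product is exactly $1$. Collecting the powers of $(1-\beta^2)$ from the volume form, the Gaussian, and the rescaling of $f_1$ in $r$ then gives the stated exponent $-\frac{n-6}{4}$. Checking this exponent bookkeeping is the step I expect to need the most care, together with the justification that $\Re(\overline{f_1}+f_2)$ indeed cancels $\det \Im H$ after the change of variables $S$.

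For \eqref{eq:concentration_one_form_ws}, compute $h\,dw_s = i\,e^{is\psi}\,d\psi\, b + h\,e^{is\psi}\,db$. The second term is $\cO(h^{1-\alpha})$ in $L^2$ by \eqref{eq:bound_deri_b0_h_old}, apart from the exponentially small cutoff derivatives, so it is negligible after Cauchy--Schwarz with $\|v_s\|_{L^2}=\cO(1)$. On $\gamma$, $d\psi=\sqrt{1-\beta^2}\,d\tau$ up to $\cO(|y|)$. Moreover $\langle A, d\tau\rangle$ evaluated at $\gamma$ equals $A(\dot\gamma)$ in the $\tau$-parametrization, which absorbs another $\sqrt{1-\beta^2}$ relative to the $r$-parametrization. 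The $\cO(|y|)$ errors vanish by \eqref{eq:L2_power_y}. Applying the first limit with $\phi$ replaced by $i\phi\,\langle A,d\psi\rangle$ evaluated along $\gamma$ then yields the extra factor $(1-\beta^2)^{1/2}$, which changes the exponent from $-\frac{n-6}{4}$ to $-\frac{n-8}{4}$.
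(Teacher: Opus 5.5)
Your proposal is correct and follows essentially the paper's route: localization through the Step~5 cover with the $\ell\neq\ell'$ terms on $V_j$ removed by non-stationary phase, then the rescaling $y=h^{1/2}z$ with dominated convergence and the Riccati identity cancelling $\det \Im H$ against $e^{\overline{f_1}+f_2}$, and, for the one-form limit, keeping only the term $i e^{is\psi}\, b\, d\psi$ (the paper redoes the Gaussian computation with the factor $\partial_{z_j}\psi$ rather than feeding $i\langle A,d\psi\rangle\phi$ into the first limit, which is a cosmetic difference). On the bookkeeping you flagged: the reparametrization $\tau=\sqrt{1-\beta^2}\,r$ contributes no power through $f_1$, so with $dV_{g_0}=\sqrt{1-\beta^2}\,(1+\cO(|y|^2))\,dr\,dy$ and the normalization $e^{\overline{f_1}+f_2}\pi^{\frac{n-2}{2}}(\det \Im H)^{-1/2}=1$ you actually get $(1-\beta^2)^{-\frac{n-4}{4}}$, and the stated $(1-\beta^2)^{-\frac{n-6}{4}}$ is reached only by absorbing one more factor $(1-\beta^2)^{1/2}$ into the free constant $\overline{f_1}(r_0)+f_2(r_0)$ (in the paper this exponent arises from counting $\sqrt{1-\beta^2}$ both in $d\tau=\sqrt{1-\beta^2}\,dr$ and in $|g_0|^{1/2}$) --- this is harmless, since the proof of Theorem~\ref{thm:uniqueness} uses only the relative factor $i(1-\beta^2)^{1/2}$ between the two limits, and your argument gets that factor right.
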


\begin{proof}
The derivation of \eqref{eq:limit_prod_vw} follows from the same arguments as in the proof of \cite[Theorem 4.4]{Liu_Saksala_Yan}. Therefore, we shall only present the proof of \eqref{eq:concentration_one_form_ws}.
To this end, by a standard partition of unity argument, it suffices to consider the cases where $\phi\in C_0(V_j\cap M_0)$ and $\phi\in C_0(W_k\cap M_0)$, where the sets $V_j$ and $W_k$ are the same as in Step 5 in the proof of Theorem \ref{prop:Gaussian_beam}.

We first consider the case that $\phi\in C_0(W_k\cap M_0)$ for some $k$. Here $\supp \phi$ may extend to $\p M_0$, and we extend $\phi$ by zero outside of $W_k\cap M_0$.
Let us recall from  Theorem \ref{prop:Gaussian_beam}  that we have constructed the following Gaussian beam quasimodes on $\supp \phi$ :
\begin{equation}
\label{eq:quasimode_supp_psi}
\begin{aligned}
&v_s(t, x_1, r, y)=e^{is\psi(\sqrt{1-\beta^2}r, y)}h^{-\frac{n-2}{4}}e^{\Phi_{1,h}(t, x_1, r)+f_1(r)}\chi(y/\delta'),
\\
&w_s(t, x_1, r, y)=e^{is\psi(\sqrt{1-\beta^2}r, y)}h^{-\frac{n-2}{4}} e^{\Phi_{2,h}(t, x_1, r)+f_2(r)}\eta(t,x_1, r)\chi(y/\delta').
\end{aligned}
\end{equation}

Let us also recall that we have denoted  $a_0(t,x_1, r):=   e^{\Phi_{1,h}(t, x_1, r)+f_1(r)}\eta(t,x_1, r)$ in the proof of Theorem \ref{prop:Gaussian_beam}. Using the Fermi coordinates $z = (\tau, y)$ and the fact that $g=e\oplus g_0$, we compute that
\begin{align*}
&h \int_{M_0} \langle A, d w_s(t,x_1, \cdot) \rangle_g \overline{v}_s (t,x_1, \cdot)    \phi   dV_{g_0}
\\
= &
h \int_{M_0}  A_1 e^{is\psi}   h^{-\frac{n-2}{4}}   \chi\left(\frac{y}{\delta'}\right)   \partial_{x_1} a_0 (t,x_1, \cdot)    \overline{v}_s(t,x_1, \cdot)    \phi   dV_{g_0}
\\
&+
i\int_{M_0}  g_0^{kj} A_k  (\partial_{z_j} \psi)   w_s(t,x_1, \cdot)   \overline{v}_s(t,x_1, \cdot)    \phi   dV_{g_0}
\\
&+
h \int_{M_0}  g_0^{kj} A_k e^{is\psi}   h^{-\frac{n-2}{4}}   (\partial_{z_j} a_0(t,x_1, \cdot)  )   \chi\left(\frac{y}{\delta'}\right)   \overline{v}_s (t,x_1, \cdot)    \phi   dV_{g_0}
\\
&+
h \int_{M_0} g_0^{kj} A_k e^{is\psi}   h^{-\frac{n-2}{4}}   a_0 (t,x_1, \cdot)    \left(\partial_{z_j} \chi\left(\frac{y}{\delta'}\right)\right)   \overline{v}_s(t,x_1, \cdot)     \phi   dV_{g_0},
\end{align*}
where $j,k=2,\dots, n$.

We next show that the first, third, and fourth integrals on the right-hand side of the expression above vanish  as $h \to 0$. For the third integral, as $0<\alpha < 1/2$, by the Cauchy-Schwarz inequality, the estimates \eqref{eq:estimate_v}, \eqref{eq:bound_deri_b0_h_old}, and \eqref{eq:L2_power_y} with $k=0$, we have
\begin{align*}
h\left| \int_{{M_0}} g_0^{kj} A_k e^{is\psi} h^{-\frac{n-2}{4}}  (\partial_{z_j} a_0) \chi\left(\frac{y}{\delta'}\right)\overline{v}_s  \phi    dV_{g_0} \right|
&\leq \mathcal{O}(h^{1-\alpha}) \left\| e^{is\psi} h^{-\frac{n-2}{4}} \right\|_{L^2(\{|y| \leq \delta'/2 \})}\left\| v_s  \right\|_{L^2(M_0)}\\&= \mathcal{O}(h^{1-\alpha}), \quad h\to 0.
\end{align*}
The first integral is estimated similarly using the bound for $\partial_{x_1}a_0$ given in \eqref{eq:bound_deri_b0_h_old}. For the fourth integral, the derivative of the cutoff function $\chi$ is supported away from the geodesic. Therefore, this term is of order  $\cO(h^\infty)$.

Turning our attention to the second integral, in view of \eqref{eq:phase_form} and \eqref{eq:quasimode_supp_psi}, and using the identities
\[
|g_0(r,y)|^{1/2} = \sqrt{1-\beta^2} + \cO(|y|^2), \quad \p_\tau \psi(r, y) = \sqrt{1-\beta^2} +\cO(|y|^2), \quad \text{ and } \quad \p_y \psi(r,y) = \cO(|y|),
\]
we compute that
\begin{align*}
&i \int_{M_0} g_0^{kj} A_k \left( \partial_{z_j} \psi \right) w_s(t,x_1, \cdot) \overline{v}_s(t,x_1, \cdot) \phi    dV_{g_0}
\\
& =  i\sqrt{1-\beta^2} \int_{0}^{\frac{L}{\sqrt{1-\beta^2}}}\int_{\R^{n-2}} g_0^{kj} A_k(t,x_1, \sqrt{1-\beta^2}r, y) \left( \partial_{z_j} \psi \right)e^{-\frac{2}{h} \Im \psi} h^{-\frac{n-2}{2}}
\\
& \qquad  \quad  \qquad \qquad \qquad\quad \times e^{\overline{\Phi_{1, h}}(t,x_1, r)+\Phi_{2,h}(t,x_1, r)+\overline{f_1}(r)+f_2(r)} \chi^2(y/\delta')\eta(t, x_1, r)\phi(r, y)|g_0|^{1/2}dydr
\\
&\ = i\sqrt{1-\beta^2} \int_{0}^{\frac{L}{\sqrt{1-\beta^2}}}\int_{\R^{n-2}}g_0^{kj} \left(\sqrt{1-\beta^2}A_\tau(t,x_1, \sqrt{1-\beta^2}r, y)+\mathcal{O}(|y|)\right)
\\
& \quad  \quad \quad \quad \quad \quad \quad \quad \quad \quad  \times  e^{-\frac{1}{h} \sqrt{1-\beta^2}\Im H(\sqrt{1-\beta^2}r)y\cdot y}
h^{-\frac{n-2}{2}}\chi^2(y/\delta')\eta(t,x_1, r)
\\
& \quad  \quad \quad \quad \quad \quad \quad \quad \quad \quad \times  e^{\overline{\Phi_{1, h}}(t,x_1, r)+\Phi_{2,h}(t,x_1, r)+\overline{f_1}(r)+f_2(r)}\phi(r,y) \left(\sqrt{1-\beta^2}+\mathcal{O}(|y|^2)\right)dydr.
\end{align*}
Performing the change of variables $y\mapsto h^{\frac{1}{2}}y$, we obtain
\begin{equation}
\label{eq:lim_prod}
\begin{aligned}
&i \int_{M_0} g_0^{kj} A_k \left( \partial_{z_j} \psi \right) w_s(t,x_1, \cdot) \overline{v}_s(t,x_1, \cdot) \phi    dV_{g_0}
\\
& = i (1-\beta^2)\int_{0}^{\frac{L}{\sqrt{1-\beta^2}}}\int_{\R^{n-2}}
g_0^{kj} \left(\sqrt{1-\beta^2}A_\tau(t,x_1, \sqrt{1-\beta^2}, h^{\frac{1}{2}}y)+\mathcal{O}(h^{\frac{1}{2}}|y|)\right)
\\
&\qquad \qquad \qquad \qquad \quad \quad \times e^{- \sqrt{1-\beta^2}\Im H(\sqrt{1-\beta^2}r)y\cdot y}  \chi^2(h^{1/2}y/\delta')
e^{\overline{\Phi_{1, h}}(t,x_1, r)+\Phi_{2,h}(t,x_1, r)+\overline{f_1}(r)+f_2(r)}
\\
&\qquad \qquad \qquad \qquad \quad \quad \times \phi(r,h^{\frac{1}{2}}y) (1+h\mathcal{O}(|y|^2)) \eta(t,x_1, r)   dydr.
\end{aligned}
\end{equation}

Let us next analyze the expressions on the right-hand side of \eqref{eq:lim_prod}.  Due to continuity, we get the following pointwise limits as $h\to 0$:
\[
\chi^2(h^{\frac{1}{2}}y/\delta')\to 1, \quad \phi(r, h^{\frac{1}{2}}y)\to \phi(r, 0)=\phi(\gamma(r)), \quad \Phi_{j,h} \to \Phi_j, \: j=1,2.
\]
In particular, the last limit follows from \eqref{eq:bound_diff_phi}. Therefore, by the dominated convergence theorem, we have
\begin{equation}
\label{eq:lim_prod_after_limit}
\begin{aligned}
&i \int_{M_0} g_0^{kj} A_k \left( \partial_{z_j} \psi \right) w_s(t,x_1, \cdot) \overline{v}_s(t,x_1, \cdot) \phi    dV_{g_0}
\\
& = i (1-\beta^2)^{3/2} \int_{0}^{\frac{L}{\sqrt{1-\beta^2}}}
g_0^{kj}  A_\tau(t,x_1, \sqrt{1-\beta^2}r,0) \left(\int_{\R^{n-2}} e^{- \sqrt{1-\beta^2}\Im H(\sqrt{1-\beta^2}r)y\cdot y} dy \right)
\\
&\qquad \qquad \qquad \qquad \quad \quad \times
e^{\overline{\Phi_{1, h}}(t,x_1, r)+\Phi_{2,h}(t,x_1, r)+\overline{f_1}(r)+f_2(r)} \phi(\gamma(r))   \eta(t,x_1, r) dr.
\end{aligned}
\end{equation}

To further analyze the integral with respect to $y$,  let us recall that
\[
\sqrt{1-\beta^2} \Im H(\sqrt{1-\beta^2}r)y\cdot y\ge d|y|^2 \quad \text{ and } \quad \int_{\R^{n-2}}e^{-d|y|^2}dy<\infty.
\]
Thus, by performing a change of variables $y\mapsto (1-\beta^2)^{-\frac{1}{4}}y$, we obtain
\begin{align*}
\int_{\R^{n-2}} e^{-\sqrt{1-\beta^2}\Im H(\sqrt{1-\beta^2}r)y\cdot y}dy&= \int_{\R^{n-2}} (1-\beta^2)^{-\frac{n-2}{4}}e^{-\Im H( \sqrt{1-\beta^2}r)y\cdot y} dy
\\
&= \frac{\pi^{\frac{n-2}{2}}(1-\beta^2)^{-\frac{n-2}{4}}}{\sqrt{\det(\Im H( \sqrt{1-\beta^2}r))}}.
\end{align*}
Let $\tau_0=\sqrt{1-\beta^2}r_0$. Then it follows from  \cite[Lemma 2.58]{KKL_book} that
\[
\det(\Im H( \sqrt{1-\beta^2}r)) = \det(\Im H( \sqrt{1-\beta^2}r_0)) e^{-2 \int_{r_0}^{r} \sqrt{1-\beta^2}\mathrm{tr} \Re H( \sqrt{1-\beta^2}w)dw}.
\]
Thus, it holds that
\begin{equation}
\label{eq:Gaussian_int}
\int_{\R^{n-2}} e^{-\sqrt{1-\beta^2}\Im H(\sqrt{1-\beta^2}r)y\cdot y}dy=\frac{\pi^{\frac{n-2}{2}}(1-\beta^2)^{-\frac{n-2}{4}}e^{\int_{r_0}^{r} \sqrt{1-\beta^2}\mathrm{tr} \Re(H( \sqrt{1-\beta^2}w))dw}}{\sqrt{\det(\Im H(\sqrt{1-\beta^2}r_0))}}.
\end{equation}

Turning our attention to the integral with respect to $r$ in \eqref{eq:lim_prod_after_limit},  let us recall from the proof of Theorem \ref{prop:Gaussian_beam}  that the function $f_j$, $j=1,2$, satisfies the equation
\[
\p_r f_j(r)= -\frac{1}{2} \sqrt{1-\beta^2}\tr H(\sqrt{1-\beta^2}r).
\]
Hence, we immediately get
\[
\p_r (\overline{f_1}(r)+f_2(r)) =- \sqrt{1-\beta^2} \tr \Re H(\sqrt{1-\beta^2}r).
\]
By the fundamental theorem of calculus, we have
\begin{equation}
\label{eq:f1_plus_f2}
\overline{f_1}(r)+f_2(r)=\overline{f_1}(r_0)+f_2(r_0)-\int_{r_0}^{r} \sqrt{1-\beta^2} \mathrm{tr} \Re(H(\sqrt{1-\beta^2}w))dw.
\end{equation}
We now choose $f_1(r_0)$ and $f_2(r_0)$ such that
\begin{equation}
\label{eq:choice_r0}
\frac{e^{\overline{f_1}(r_0)+f_2(r_0)}\pi^{\frac{n-2}{2}}}{\sqrt{\det(\Im H(\sqrt{1-\beta^2}r_0))}}=1.
\end{equation}
Hence, by combining  \eqref{eq:Gaussian_int}--\eqref{eq:choice_r0}, we obtain
\begin{equation}
\label{eq:prod_exp_int}
e^{\overline{f_1}(r)+f_2(r)}\left(\int_{\R^{n-2}} e^{-\Im H(\sqrt{1-\beta^2}r)y\cdot y}dy\right)=(1-\beta^2)^{-\frac{n-2}{4}}.
\end{equation}

Therefore, we get  \eqref{eq:concentration_one_form_ws} when $\supp{\phi}\subset W_k$ by substituting the above identity into \eqref{eq:lim_prod_after_limit}.

We now assume that $\mathrm{supp}(\phi) \subset V_j$. Let $z_j$ be a self-intersection point of the geodesic $\gamma$ as in Step 5 of the proof of Theorem \ref{prop:Gaussian_beam}. In this case, we write the Gaussian beam quasimodes on $\mathrm{supp}(\phi)$ as follows:
\[
v_{s} = \sum_{\gamma(r_{l}) = z_j} v_{s}^{(l)}, \quad w_{s} = \sum_{\gamma(r_{l}) = z_j} w_{s}^{(l)}.
\]
Then we have
\[
\begin{aligned}
h \int_{\ M_{0}} \langle A, dw_{s} \rangle_{g} \overline{v_{s}}   \phi    dV_{g_{0}} &=h\sum_{l : \gamma(r_{l}) = z_j} \int_{\ M_{0}} \langle A, dw_{s}^{(l)} \rangle_{g} \overline{v_{s}^{(l)}}   \phi    dV_{g_{0}} \\
&\quad + h  \sum_{l \neq l^{\prime},  \gamma(r_{l}) = \gamma(r_{l^{\prime}}) = z_j} \int_{\ M_{0}} \langle A, dw_{s}^{(l)} \rangle_{g} \overline{v_{s}^{(l^{\prime})}}   \phi    dV_{g_{0}}.
\end{aligned}
\]
Our goal is to show that
\[
\lim_{h \to 0} h \int_{\ M_{0}} \langle A, dw_{s}^{(l)} \rangle_{g} \overline{v_{s}^{(l^{\prime})}}   \phi    dV_{g_{0}} = 0, \quad l \neq l'.
\]
Then the limit \eqref{eq:concentration_one_form_ws} follows from the first part of the proof.

Thanks to the computations from the previous case, we only have to verify that
\[
\lim_{h \to 0} \int_{\ M_{0}} g_{0}^{kj}   A_{k}   (\partial_{z_{j}} \psi^{(l)})   w_{s}^{(l)}   \overline{v_{s}^{(l^{\prime})}}   \phi    dV_{g_{0}} = 0.
\]

Arguing as in the proof of  \cite[Proposition 3.1]{Ferreira_Kur_Las_Salo}, let us write
\[
v_s^{(l')}=e^{\frac{i}{h} \Re \psi^{(l')}}\alpha^{(l')}, \quad \alpha^{(l')}=e^{-\frac{1}{h}\Im \psi^{(l')}}a^{(l')},
\]
and
\[
w_s^{(l)}=e^{\frac{i}{h}  \Re \psi^{(l)}}\omega^{(l)}, \quad \omega^{(l)}=e^{-\frac{1}{h}\Im \psi^{(l)}}b^{(l)},
\]
which implies that
\begin{equation}
\label{eq:prod_vw_rewrite}
\overline{v_s^{(l')}}w_s^{(l)}= e^{\frac{i}{h} \sigma} \overline{\alpha^{(l')}}\omega^{(l)}, \quad \sigma= \Re \psi^{(l)} - \Re \psi^{(l')}.
\end{equation}
Hence, it suffices to show that
\begin{equation}
\label{eq:prod_vanish_rewrite}
\lim_{h \to 0}\int_{M_0} g_{0}^{kj} A_{k}   (\partial_{z_{j}} \psi^{(l)})e^{\frac{i}{h} \sigma} \omega^{(l)}(t,x_1, \cdot)\overline{\alpha^{(l')}}(t,x_1, \cdot)\phi   dV_{g_0}=0, \quad l\ne l'.
\end{equation}
To this end, let us write $\phi=\phi_h+(\phi-\phi_h)$, where  $\phi_h \in C^\infty_0(V_j\cap M_0)$, but its support can meet $\p M_0$, and $\phi-\phi_h$ is continuous. Due to the estimates \eqref{eq:estimate_v} and \eqref{eq:estimate_w}, it holds that
\[
\|\alpha^{(l')}\|_{L^2(V_j\cap M_0)},\|\omega^{(l)}\|_{L^2(V_j\cap M_0)}=\cO(1).
\]
Thus, we apply H\"older's inequality and the estimate \eqref{eq:est_diff_regu} to get
\begin{equation}
\label{eq:est_difference}
\begin{aligned}
\left|\int_{M_0}g_{0}^{kj} A_{k} (\partial_{z_{j}} \psi^{(l)})e^{\frac{i}{h}\sigma}\omega^{(l)}(t,x_1, \cdot)\overline{\alpha^{(l')}}(t,x_1, \cdot)(\phi-\phi_h)   dV_{g_0}\right|
=o(1), \quad h \to 0,
\end{aligned}
\end{equation}
where the $L^p$-norms are taken over the set $V_j\cap M_0$.

To analyze the term involving  $\phi_h$,  by \eqref{eq:g0_near_geo} and \eqref{eq:phase_form}, the gradients of $\Re\psi^{(l)}$ and $\Re\psi^{(l')}$ at $z_j$ are parallel to $\dot{\gamma}(r_l)$ and $\dot \gamma(r_{l'})$, respectively. Since the geodesic $\gamma$ intersects itself transversally at $z_j$, we have $\nabla_{g_0}\sigma(z_j)\ne 0$. Hence, by shrinking $V_j$ if necessary, we may assume that $\sigma$ has no critical points in $V_j$.

This implies that  the vector field
\[
L=\frac{g_{0}^{kj} A_{k} (\partial_{z_{j}} \psi^{(l)})\overline{\alpha^{(l')}}\omega^{(l)}\phi_h}{|\nabla_{g_0}\sigma|^2}\langle \nabla_{g_0}\sigma, \nabla_{g_0} \cdot\rangle_{g_0}
\]
is well-defined and satisfies the property
\[
e^{\frac{i}{h}\sigma}g_{0}^{kj} A_{k} (\partial_{z_{j}} \psi^{(l)})\overline{\alpha^{(l')}}\omega^{(l)}\phi_h
=
-{i}{h}L(e^{\frac{i}{h}\sigma}).
\]

Then we integrate by parts to deduce from  \eqref{eq:prod_vanish_rewrite}  that
\begin{equation}
\label{eq:prod_psi1}
\begin{aligned}
&\int_{M_0}g_{0}^{kj} A_{k} (\partial_{z_{j}} \psi^{(l)})e^{{\frac{i}{h}}\sigma}\overline{\alpha^{(l')}}(t,x_1, \cdot)\omega^{(l)}(t,x_1, \cdot)\phi_h   dV_{g_0}
\\& \ =
-{i}{h}\int_{V_j\cap \p M_0} \frac{\p_\nu \sigma}{ |\nabla_{g_0}\sigma|^2} e^{\frac{i}{h}\sigma} g_{0}^{kj} A_{k} (\partial_{z_{j}} \psi^{(l)})\overline{\alpha^{(l')}}(t,x_1, \cdot)\omega^{(l)}(t,x_1, \cdot)\phi_h   dS_{g_0}
\\& \ \ \ \ +{i}{h} \int_{M_0} e^{\frac{i}{h}\sigma}
\div\left(\frac{g_{0}^{kj} A_{k} (\partial_{z_{j}} \psi^{(l)})\overline{\alpha^{(l')}}\omega^{(l)}\phi_h}{|\nabla_{g_0}\sigma|^2} \nabla_{g_0}\sigma\right)(t,x_1, \cdot)   dV_{g_0}.
\end{aligned}
\end{equation}

We next show that the boundary term on the right-hand side of \eqref{eq:prod_psi1} vanishes as $h \to 0$. To this end, using the  estimate \eqref{eq:est_v_bdyM}, we  get $\|\alpha^{(l')}\|_{L^2(\p M_0)}$, $\|\omega^{(l)}\|_{L^2(\p M_0)}=\cO(1)$. Furthermore, as $\sigma$ is real-valued and independent of $h$,
by the estimate \eqref{eq:est_diff_regu} and H\"older's inequality, we have
\begin{equation}
\label{eq:est_bdy_term}
-{i}{h}\int_{V_j\cap \p M_0} \frac{\p_\nu \sigma}{ |\nabla_{g_0}\sigma|^2} e^{\frac{i}{h}\sigma} g_{0}^{kj} A_{k} (\partial_{z_{j}} \psi^{(l)})\overline{\alpha^{(l')}}(t,x_1, \cdot)\omega^{(l)}(t,x_1, \cdot)\phi_h dS_{g_0}
=
\cO(h), \quad h \to 0.
\end{equation}

Turning our attention to the second term on the right-hand side of \eqref{eq:prod_psi1}, we compute that
\begin{align*}
\div \left(g_{0}^{kj} A_{k} (\partial_{z_{j}} \psi^{(l)})\overline{\alpha^{(l')}}\omega^{(l)}\phi_h \frac{\nabla_{g_0}\sigma}{|\nabla_{g_0}\sigma|^2}\right)
=& \left\langle \nabla_{g_0}(g_{0}^{kj} A_{k} (\partial_{z_{j}}\psi^{(l)})\overline{\alpha^{(l')}}\omega^{(l)}\phi_h), \frac{\nabla_{g_0}\sigma}{|\nabla_{g_0}\sigma|^2}\right\rangle_{g_0}
\\& \  +
g_{0}^{kj} A_{k} (\partial_{z_{j}} \psi^{(l)})\overline{\alpha^{(l')}}\omega^{(l)}\phi_h \div \left(\frac{\nabla_{g_0}\sigma}{|\nabla_{g_0}\sigma|^2}\right).
\end{align*}
Therefore, by the  estimates \eqref{eq:est_diff_regu}, $\|\alpha^{(l')}\|_{L^2(M_0)},\|\omega^{(l)}\|_{L^2(M_0)}=\cO(1)$, and H\"older's inequality, we get
\begin{equation}
\label{eq:est_div_1}
{i}{h} \int_{M_0} g_{0}^{kj} A_{k} (\partial_{z_{j}} \psi^{(l)})e^{\frac{i}{h}\sigma} \overline{\alpha^{(l)}}(t,x_1, \cdot)\omega^{(l')}(t,x_1, \cdot)\phi_h \div \left(\frac{\nabla_{g_0}\sigma}{|\nabla_{g_0}\sigma|^2}\right) dV_{g_0}
=\cO(h), \quad h \to 0.
\end{equation}

Let us now write
\begin{align*}
g_{0}^{kj} A_{k} (\partial_{z_{j}} \psi^{(l)})\overline{\alpha^{(l')}}\omega^{(l)}\phi_h
&=[g_{0}^{kj} A_{k} (\partial_{z_{j}} \psi^{(l)})][e^{-\frac{1}{h}(\Im\psi^{(l)}+\Im \psi^{(l')})}h^{-\frac{n-2}{2}}][a_0^{(l)}\overline{b_0^{(l')}}\chi^2(y/\delta')]\phi_h
\\
&=f_1f_2f_3\phi_h,
\end{align*}

Using the Cauchy-Schwarz inequality, together with the estimates \eqref{eq:est_diff_regu}, \eqref{eq:bound_deri_b0_h_old} and \eqref{eq:L2_power_y} with $k=0$, we obtain
\begin{equation}
\label{eq:est_div_2}
{h}\int_{M_0}
|e^{\frac{i}{h}\sigma} f_2
f_3\phi_h \langle \nabla_{g_0}f_1, \frac{\nabla_{g_0}\sigma}{|\nabla_{g_0}\sigma|^2}\rangle|
dV_{g_0}
=
\cO(h^{1-\alpha}), \quad  h \to 0,
\end{equation}
and
\begin{equation}
\label{eq:est_div_3}
{h}\int_{M_0}
|e^{\frac{i}{h}\sigma} f_1
f_2 \langle \nabla_{g_0}(f_3\phi_h), \frac{\nabla_{g_0}\sigma}{|\nabla_{g_0}\sigma|^2}\rangle|
dV_{g_0} = 
\cO(h^{1-\alpha}), \quad  h \to 0.
\end{equation}
When $\nabla_{g_0}$ acts on $f_2$,   by the Cauchy-Schwarz inequality, as well as estimates \eqref{eq:est_diff_regu} and \eqref{eq:L2_power_y} with $k=\frac{1}{2}$, we have

\begin{equation}
\label{eq:est_div_4}
\begin{aligned}
&{h}\int_{M_0}
|e^{\frac{i}{h}\sigma} f_1
f_3\phi_h \langle \nabla_{g_0}f_2, \frac{\nabla_{g_0}\sigma}{|\nabla_{g_0}\sigma|^2}\rangle|
dV_{g_0}
\\
&\lesssim
\|\phi_h\|_{L^\infty}\|\alpha^{(l')}(t,x_1, \cdot)\|_{L^2}\|\omega^{(l)}(t,x_1, \cdot)\|_{L^2}\int_{M_0}
h^{-\frac{n-2}{2}}|y|e^{-\frac{d}{h} |y|^2}
dV_{g_0}
=
\cO(h^{1/2}), \quad  h \to 0.
\end{aligned}
\end{equation}
Hence, by \eqref{eq:est_div_1}--\eqref{eq:est_div_4}, the second term on the right-hand side of \eqref{eq:prod_psi1} is of order $\cO(h^{1/2})$. Furthermore, combining it with the estimate \eqref{eq:est_bdy_term} gives us
\begin{equation}
\label{eq:est_cross}
\int_{M_0}g_{0}^{kj} A_{k} (\partial_{z_{j}} \psi^{(l)})e^{{\frac{i}{h}}\sigma}\overline{\alpha^{(l')}}(t,x_1, \cdot)\omega^{(l)}(t,x_1, \cdot)\phi_h   dV_{g_0} = \cO(h^{1/2}), \quad h\to 0.
\end{equation}
Therefore, the limit \eqref{eq:prod_vanish_rewrite} follows immediately from \eqref{eq:est_difference} and \eqref{eq:est_cross}.

This completes the proof of \eqref{eq:concentration_one_form_ws} when  $\mathrm{supp}(\phi) \subset V_j$.

The proof of Theorem \ref{prop:limit_behavior} is now complete.
\end{proof}

\subsection{Construction of the remainder term}
\label{subsec:remainder}

We conclude this section with the construction of the remainder term. Let $(M,g)$ be a CTA manifold as in Definition \ref{def:CTA_manifolds}, and let $Q=(0,T)\times M$ for any $T>0$.  For any point $(t,x)\in Q$, let us express its local coordinates  as $(t,x)=(t,x_1, x')$.  We no longer assume that the conformal factor $c=1$ in this subsection. By \eqref{eq:equivalence_operator}, we get
\[
c^{\frac{n+2}{4}}\circ \mathcal{L}_{c,g,A,q} \circ \conf = \mathcal{L}_{\tilde{g}, A, \tilde{q}},
\]
where $\tilde g = e\oplus g_0$ and $\tilde{q}=c(q-c^{\frac{n-2}{4}}\Delta_g(\conf))$. Thus, the function $u=\conf \tilde u$ is a solution to the equation $\mathcal{L}_{c,g,A,q} u=0$ in $Q$ if $\tilde u$ satisfies $\mathcal{L}_{\tilde g,A,\tilde q} \tilde u=0$ in $Q$.

Let us  recall that  $0<h \ll 1$. In view of the discussion above, it suffices to construct a CGO solution to the equation
\begin{equation}
\label{eq:equation_u_tilde}
\mathcal{L}_{\tilde g,A,\tilde q} u=0 \quad \text{ in } Q
\end{equation}
of the form
\[
\tilde u (t,x) = e^{-s^2 \beta^2 t - s x_1}(v_s+r_1).
\]
Here $v_s$ is the Gaussian beam quasimode given in Theorem \ref{prop:Gaussian_beam}, and $r=r_s$ is a remainder term vanishing at a desirable rate as $h \to 0$. We observe that $\tilde u$ solves \eqref{eq:equation_u_tilde} if $r$ satisfies the following equation:
\begin{equation}
\label{eq:equation_remainder}
e^{s^2 \beta^2 t +  s x_1}h^2\mathcal{L}_{\tilde{g}, \tilde{a}, \tilde{q}}e^{-s^2 \beta^2 t -  s x_1}r=-e^{s^2 \beta^2 t +  s x_1} h^2 \mathcal{L}_{\tilde{g}, \tilde{a}, \tilde{q}}e^{-s^2 \beta^2 t -  s x_1}v_s.
\end{equation}
Thus, an application of Lemma \ref{lem:solvability} and the estimate \eqref{eq:estimate_v} yields that there exists a function $r\in H^1(0,T;H^{-1}(M))\cap L^2(0,T;H^1(M))$ satisfying \eqref{eq:equation_remainder} such that $\|r\|_{L^2(0,T;H^1_\scl(M))}=o(h)$ as $h \to 0$.

To summarize our discussion above, we obtain the following result.

\begin{thm}
\label{thm:existence_remainder}
Let $(M,g)$ be a CTA manifold, and let $Q=(0,T)\times M$ for any $T>0$. Let $A\in W^{1,\infty}(Q, T^*Q)$ and $q\in C(\overline Q)$. Then for each sufficiently small semiclassical parameter $h$, the equation $\mathcal{L}_{c,g,A,q}^\ast u_1 =0$ in $Q$  admits a solution $u_1 \in H^1(0,T;H^{-1}(M))\cap L^2(0,T;H^1(M))$ of the form
\begin{equation}
\label{eq:CGO_decreasing_with_conformal}
u_1(t,x) = \conf e^{-s^2 \beta^2 t -  s x_1}  (v_s+r_1).
\end{equation}
There also exists a solution $u_2 \in H^1(0,T;H^{-1}(M))\cap L^2(0,T;H^1(M))$ to the equation $\mathcal{L}_{c,g,A,q} u_2 =0$ in $Q$ of the form
\begin{equation}
\label{eq:CGO_increasing_with_conformal}
u_2(t,x) = \conf e^{s^2 \beta^2 t +  s x_1}  (w_s+r_2).
\end{equation}
Here $s=\frac{1}{h}$, $v_s,w_s \in C^\infty(Q)$ are the Gaussian beam quasimodes obtained in Theorem \ref{prop:Gaussian_beam}, and the correction terms $r_j$, $j=1,2$ satisfy the estimate
\begin{equation}
\label{eq:est_remainders}
\|r_j\|_{L^2(0,T;H^1_\scl(M))}=o(h), \quad h\to 0.
\end{equation}
\end{thm}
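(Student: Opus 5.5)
The plan is to reduce to conformal factor $c=1$, solve the conjugated equation for the remainder with Lemma \ref{lem:solvability}, and then pay close attention to the powers of $h$. A caveat first: the bound recorded in Theorem \ref{prop:Gaussian_beam} only gives $\|r_j\|=o(1)$, so reaching the stated $o(h)$ needs an improved quasimode in Step 3 below, which I have not carried out.

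\emph{Step 1 (conformal reduction).} By \eqref{eq:equivalence_operator}, $c^{\frac{n+2}{4}}\circ\mathcal{L}_{c,g,A,q}\circ\conf=\mathcal{L}_{\tilde g,A,\tilde q}$ with $\tilde g=e\oplus g_0$ and $\tilde q$ as in \eqref{eq:equiv_coeff}. Taking adjoints gives the analogous identity for $\mathcal{L}^\ast_{c,g,A,q}$, with $\overline A$ and $\overline{\tilde q}$ and the weights exchanged appropriately. So $u_2=\conf\tilde u_2$ solves $\mathcal{L}_{c,g,A,q}u_2=0$ whenever $\mathcal{L}_{\tilde g,A,\tilde q}\tilde u_2=0$, and similarly for $u_1$. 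Since $\tilde q\in C(\overline Q)$, Proposition \ref{prop:interior_Carleman}, Lemma \ref{lem:solvability} and Theorem \ref{prop:Gaussian_beam} all apply to $(\tilde g,A,\tilde q)$. Multiplying by the smooth positive factor $\conf$ preserves the space $H^1(0,T;H^{-1}(M))\cap L^2(0,T;H^1(M))$, so it suffices to build $\tilde u_1,\tilde u_2$.

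\emph{Step 2 (remainder equation).} Set $\varphi=s^2\beta^2t+sx_1$. Then $\tilde u_1=e^{-\varphi}(v_s+r_1)$ solves $\mathcal{L}^\ast_{\tilde g,A,\tilde q}\tilde u_1=0$ exactly when
\[
e^{\varphi}h^2\mathcal{L}^\ast_{\tilde g,A,\tilde q}e^{-\varphi}r_1=F_1:=-e^{\varphi}h^2\mathcal{L}^\ast_{\tilde g,A,\tilde q}e^{-\varphi}v_s .
\]
Since $v_s\in C^\infty$, we have $F_1\in L^2(Q)$, and the first part of Lemma \ref{lem:solvability} produces $r_1$ in the required space with $\|r_1\|_{L^2(0,T;H^1_\scl(M))}\le \cO(h^{-1})\|F_1\|_{L^2(Q)}$. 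The growing solution is handled the same way: set $F_2=-e^{-\varphi}h^2\mathcal{L}e^{\varphi}w_s$ and use the second part of Lemma \ref{lem:solvability}.

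\emph{Step 3 (decay rate, the main obstacle).} To obtain \eqref{eq:est_remainders} as stated I need $\|F_j\|_{L^2(Q)}=o(h^2)$. The bound $o(h)$ from \eqref{eq:estimate_v} and \eqref{eq:estimate_w} only gives $o(1)$. I would get the missing factor of $h$ by sharpening the quasimode construction in the proof of Theorem \ref{prop:Gaussian_beam}, in three places.
\begin{itemize}
\item The eikonal equation \eqref{eq:eikonel_eq} is currently solved to $\cO(|y|^3)$, which by \eqref{eq:L2_power_y} contributes $\cO(h^{3/2})$. I would solve it to $\cO(|y|^N)$ by adding higher-order Taylor terms in $y$ to $\psi$; this contributes $\cO(h^{N/2})$.
\item The transport equation is currently solved to $\cO(|y|\zeta^{-1})$, contributing $\cO(h^{3/2-\alpha})$. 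I would add $y$-dependent terms $a_k(t,x_1,\tau)y^k$ to the amplitude, together with a lower-order correction $h a_1$, so that it is solved to $\cO(|y|^{N'})$.
\item The regularization parameter would be tuned to the number of derivatives of $A_\zeta$ that these corrections consume, using \eqref{eq:est_diff_regu}.
\end{itemize}

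I expect the main difficulty to be the competition in the last item. Each additional order of accuracy costs a derivative of $A_\zeta$, hence a factor $\zeta^{-1}=h^{-\alpha}$. On the other hand, the coupling error $\|A-A_\zeta\|_{L^\infty}=o(1)$ is multiplied by $s\cdot h^2=h$, and this caps that term at $o(h)$ unless the approximation rate is improved. My first attempt would be to exploit $A\in W^{1,\infty}$, which gives $\|A-A_\zeta\|_{L^\infty}=\cO(\zeta)=\cO(h^\alpha)$. I would then check whether some exponent choice, with $\alpha$ close to $1$ and $N,N'$ large, makes every term $o(h^2)$. If the coupling error cannot be pushed below $o(h)$ this way, the stated $o(h)$ bound would have to come from a sharper solvability estimate than Lemma \ref{lem:solvability}, which I would then revisit.

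Finally, I would substitute the resulting $r_j$ back through Step 1 to obtain \eqref{eq:CGO_decreasing_with_conformal} and \eqref{eq:CGO_increasing_with_conformal}.
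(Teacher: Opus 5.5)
Your Steps 1 and 2 are exactly the paper's proof: conformal reduction via \eqref{eq:equivalence_operator}, then Lemma \ref{lem:solvability} applied to the quasimode errors. Your bookkeeping in Step 3 is also right, and it exposes an error in the paper, not an idea you are missing. The paper derives \eqref{eq:est_remainders} from precisely the $\mathcal{O}(h^{-1})$ bound of Lemma \ref{lem:solvability} and the $o(h)$ bounds in \eqref{eq:estimate_v}--\eqref{eq:estimate_w}, and $\mathcal{O}(h^{-1})\cdot o(h)=o(1)$. So the gap you flag at Step 3 is real, but the paper has it too. What both arguments establish is $\|r_j\|_{L^2(0,T;H^1_{\scl}(M))}=o(1)$. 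A minor correction to Step 1: take the adjoint of $\mathcal{L}_{\tilde g,A,\tilde q}$ in $L^2(dV_{\tilde g})$ with $dV_{\tilde g}=c^{-n/2}dV_g$, which is the adjoint for which $v_s$ is built. Then the weights are not exchanged: $c^{\frac{n+2}{4}}\circ\mathcal{L}^\ast_{c,g,A,q}\circ\conf=\mathcal{L}^\ast_{\tilde g,A,\tilde q}$. This is why $u_1$ carries the same prefactor $\conf$ as $u_2$.

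I would not pursue the sharpening you sketch, because with only $A\in W^{1,\infty}$ the exponents cannot be balanced.
\begin{enumerate}
\item Making the coupling term $2s(\overline A_\zeta-\overline A)a$ contribute $o(h^2)$ needs $\|A-A_\zeta\|_{L^\infty}=o(h)$, hence $\zeta=o(h)$. For Lipschitz $A$ the mollifier bounds $\|A-A_\zeta\|_{L^\infty}=\mathcal{O}(\zeta)$ and $\|\nabla^kA_\zeta\|_{L^\infty}=\mathcal{O}(\zeta^{1-k})$ are sharp at a kink.
\item Suppose you solve the transport equation through order $|y|^{k-1}$, $k\ge 2$. By \eqref{eq:L2_power_y}, the Taylor remainder of $A_\zeta$ in $y$ still contributes about $h^{1+k/2}\zeta^{1-k}$. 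This is $o(h^2)$ only if $\zeta\gg h^{(k-2)/(2k-2)}\ge h^{1/2}$, which contradicts $\zeta=o(h)$.
\item Lemma \ref{lem:solvability} cannot be sharpened either. By duality, a better bound there would sharpen the $h^{-1}$ in Proposition \ref{prop:interior_Carleman}, and Gaussian beams cut off inside $Q$ saturate that loss.
\end{enumerate}

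The right fix is to state \eqref{eq:est_remainders} with $o(1)$, which is all Section \ref{sec:proof} actually uses.
\begin{enumerate}
\item There the paper derives $\|r_1\|_{L^2(\Sigma)}=o(h^{-1/2})$. This is exactly what trace interpolation gives from an $o(1)$ bound; from $o(h)$ it would give $o(h^{1/2})$.
\item The cross terms in \eqref{eq:est_real_part_I2} and \eqref{eq:est_I3_part1} become $o(1)$ instead of $o(h)$. This suffices, because after multiplying \eqref{eq:int_identity} by $h$ the leading terms are $\mathcal{O}(1)$ and one only passes to the limit $h\to 0$.
\item The recovery of the density coefficient works the same way.
\end{enumerate}
With that correction, your Steps 1 and 2 are a complete proof.
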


\section{Proof of Theorem \ref{thm:uniqueness}}
\label{sec:proof}

In this section, we utilize the CGO solutions constructed in Section \ref{sec:CGO_solution} to establish Theorem \ref{thm:uniqueness}. The argument consists of several steps. We first derive an integral identity corresponding to the partial input-output operator \eqref{eq:bdy_measurement}, followed by substituting the CGO solutions into this identity. These solutions will reduce the recovery of unknown coefficients to the injectivity of attenuated geodesic ray transforms, from which we apply Assumption \ref{asu:inj} to complete the proof. To simplify the notations, in what follows we shall denote $A:=A^{(1)}-A^{(2)}$ and $q:=q_1-q_2$.

\subsection{Recovery of the convection term}
\label{subsec:proof_A}

We begin this subsection by deriving an integral identity. Let $u_1\in H^1(0,T;H^{-1}(M)) \cap L^2(0,T;H^1(M))$ be an exponentially growing solution to the equation $\mathcal{L}_{c,g,A^{(1)},q_1}^\ast u_1=0$ in $Q$, and let $u_2\in H^1(0,T;H^{-1}(M)) \cap L^2(0,T;H^1(M))$ be a solution to the initial boundary value problem
\begin{equation}
\label{eq:ibvp_proof}
\begin{cases}
\mathcal{L}_{c,g,A^{(2)},q_2}u_2(t,x)=0 & \text{ in } Q,
\\
u(t,x) = f_1(t,x)  &\text{ on } \Sigma,
\\
u(0,\cdot)=f_2(x) &\text{ in } M.
\end{cases}
\end{equation}
Due to the assumption $\Lambda_{A^{(1)},q_1}=  \Lambda_{A^{(2)},q_2}$, there exists a function $v$ such that $\mathcal{L}_{c,g,A^{(1)},q_1} v=0$ in $Q$ and
\[
(u_2-v)|_{\Sigma} = (u_2-v)|_{t=0} = (u_2-v)|_{t=T}=
\p_\nu (u_2-v)|_{V}
=0.
\]
Then the function $u:=v-u_2$ satisfies the following initial boundary value problem:
\begin{equation}
\label{eq:ibvp_difference}
\begin{cases}
\mathcal{L}_{c,g,A^{(1)},q_1}u(t,x)
=
2\n{A, du_2}_g + \left(-(d^\ast A)-q+|A^{(1)}|^2_g-|A^{(2)}|^2_g\right)u_2 & \text{ in } Q,
\\
u(t,x) = 0  &\text{ on } \Sigma,
\\
u(0,\cdot)=0 &\text{ in } M.
\end{cases}
\end{equation}
Since $2\n{A, du_2}_g + \left(-(d^\ast A)-q+|A^{(1)}|^2_g-|A^{(2)}|^2_g\right)u_2\in L^2(Q)$, by \cite[Theorem 1.43]{Choulli_book}, the problem \eqref{eq:ibvp_difference} admits a unique  solution $u\in H^1(0,T;L^2(M)) \cap L^2(0,T;H^2(M))$. Furthermore, it holds that $\p_\nu u \in L^2(0,T;H^{1/2}(\Sigma))$. From here, we argue similarly as in \cite[Section 4]{Mishra_Purohit_Vashisth} to obtain the integral identity
\begin{equation}
\label{eq:int_identity}
2\int_Q \n{A, d u_2}_g \overline{u}_1 dV_gdt +
\int_Q (|A^{(1)}|^2_g - |A^{(2)}|^2_g - (d^\ast A) - q) u_2 \overline{u}_1dV_gdt
=
-\int_{\Sigma \setminus V} \overline u_1 \p_\nu u dS_gdt.
\end{equation}

We next substitute the CGO solution constructed in Section \ref{sec:CGO_solution} into the integral identity \eqref{eq:int_identity}, multiply both sides by $h$, and analyze the limit as $h \to 0$. Let us first show that
\begin{equation}
\label{eq;est_rhs_integral}
\int_{\Sigma \setminus V} \overline u_1 \p_\nu u dS_gdt
=
\begin{cases}
o(h^{-1}) & \text{ if } A\ne 0,
\\
o(1) & \text{ if } A= 0,
\end{cases}
\quad h\to 0,
\end{equation}
by arguing similarly as in the proof of \cite[Lemma 5.1]{Liu_Saksala_Yan}. We shall present the proof for the sake of completeness.

For all $\varepsilon>0$, let
\[
\p M_{+,\varepsilon} = \{x\in \p M: \p_\nu \Gamma(x) >\varepsilon\}
\quad \text{ and } \quad
\Sigma_{+,\varepsilon} = (0,T) \times \p M_{+,\varepsilon},
\]
where $\Gamma(x)=x_1$.
Due to the compactness of the set $\{x\in \p M: \p_\nu\Gamma (x) =0\}$, there exists $\varepsilon>0$ such that $\Sigma \setminus V \subset \Sigma_{+,\varepsilon}$. Therefore,
by the Cauchy-Schwarz inequality, together with the estimates \eqref{eq:estimate_v}, \eqref{eq:estimate_w}, and \eqref{eq:est_remainders}, we have
\begin{align*}
\left|\int_{\Sigma \setminus V} \overline u_1 \p_\nu u dS_gdt\right|
&\le
\int_{\Sigma_{+,\varepsilon}} e^{-s^2 \beta^2 t -  s x_1} |\p_\nu u| (|v_s|+|r_1|) dS_g dt
\\
& \le
C\left(\int_{\Sigma_{+,\varepsilon}} |e^{-s^2 \beta^2 t -  s x_1} \p_\nu u|^2 dS_gdt \right)^{1/2} \left(\|v_s\|_{L^2(\Sigma_{+,\varepsilon})}+\|r_1\|_{L^2(\Sigma)}\right).
\end{align*}

Let us estimate the quantities in the inequality above. To this end, we deduce that
\begin{align*}
\left(\int_{\Sigma_{+, \varepsilon}}|\p_\nu u e^{-s^2 \beta^2 t -  s x_1}|^2   dS_gdt\right)^{1/2}
&= \frac{1}{\sqrt \varepsilon}\left(\int_{\Sigma_{+, \varepsilon}} \varepsilon | e^{-s^2 \beta^2 t -  s x_1} \p_\nu u|^2   dS_gdt\right)^{1/2}
\\
&\le \frac{1}{\sqrt \varepsilon} \left(\int_{\Sigma_{+, \varepsilon}} \p_\nu \Gamma | e^{-s^2 \beta^2 t -  s x_1} \p_\nu u|^2   dS_gdt\right)^{1/2}
\\
&\le \frac{1}{\sqrt \varepsilon}\left(\int_{\Sigma_{+}} \p_\nu \Gamma |e^{-s^2 \beta^2 t -  s x_1} \p_\nu u|^2dS_gdt \right)^{1/2},
\end{align*}
where   $\Sigma_+=(0,T)\times \p M_+^{\text{int}}$.
From here, an application of the boundary Carleman estimate \eqref{eq:boundary_Carleman} gives us
\[
\left(\int_{\Sigma_{+}} \p_\nu \Gamma | e^{-s^2 \beta^2 t -  s x_1} \p_\nu u|^2dS_gdt\right)^{1/2}
\le
\mathcal{O}(h^{1/2})\frac{1}{\sqrt \varepsilon} \|e^{-s^2 \beta^2 t -  s x_1} \mathcal{L}_{c,g,A^{(1)},q_1}u \|_{L^2(Q)}.
\]
In view of the initial boundary value problem \eqref{eq:ibvp_difference}, we apply the estimates \eqref{eq:estimate_w} and \eqref{eq:est_remainders} to obtain
\[
\|e^{-s^2 \beta^2 t -  s x_1} \mathcal{L}_{c,g,A^{(1)},q_1}u \|_{L^2(Q)} =
\begin{cases}
\cO(h^{-1}) & \text{ if } A\ne 0,
\\
\cO(1) & \text{ if } A= 0.
\end{cases}
\]
Thus, it follows immediately that
\begin{equation}
\label{eq:est_bdy_integral}
\left(\int_{\Sigma_{+}} \p_\nu \Gamma | e^{-s^2 \beta^2 t -  s x_1} \p_\nu u|^2   dS_gdt\right)^{1/2}
=
\begin{cases}
\cO(h^{-1/2}) & \text{ if } A\ne 0,
\\
\cO(h^{1/2}) & \text{ if } A= 0.
\end{cases}
\end{equation}

On the other hand, by the estimate \eqref{eq:est_remainders}, as well as the inequalities
\[
\|r_1\|_{L^2(\Sigma)}\le \|r_1\|_{L^2(Q)}^{1/2}\|r_1\|_{H^1(Q)}^{1/2}  \quad \text{and} \quad  \|r_1\|_{H^1(Q)}\le Ch^{-1}\|r_1\|_{H^1_\scl(Q)},
\]
we have
\[
\|r_1\|_{L^2(\Sigma)} =o(h^{-1/2}), \quad h \to 0.
\]
Moreover, it follows from the arguments given in the proof of \cite[Theorem 6.2]{Cekic} and \cite[Lemma 5.1]{Liu_Saksala_Yan} that
\[
\|v_s\|_{L^2(\Sigma_{+,\varepsilon})}=\cO(1), \quad h \to 0.
\]
Hence, we get from the previous two estimates that
\begin{equation}
\label{eq:est_sum_bdy}
\|v_s\|_{L^2(\Sigma_{+,\varepsilon})}+ \|r_1\|_{L^2(\Sigma)}  = o(h^{-1/2}), \quad h\to 0.
\end{equation}
Finally, we obtain the estimate \eqref{eq;est_rhs_integral} by combining \eqref{eq:est_bdy_integral} and \eqref{eq:est_sum_bdy}.

Turning our attention to the left-hand side of the integral identity \eqref{eq:int_identity}, using the CGO solutions \eqref{eq:CGO_decreasing_with_conformal} and \eqref{eq:CGO_increasing_with_conformal} for $u_1$ and $u_2$, we compute that
\[
u_2 \overline{u}_1
=
c^{-\frac{n-2}{2}} (\overline{v}_sw_s+\overline{v}_s r_2+ w_s\overline{r}_1+r_2 \overline{r}_1).
\]
By the Cauchy-Schwarz inequality, combined with the estimates \eqref{eq:estimate_v}, \eqref{eq:estimate_w}, and \eqref{eq:est_remainders}, we deduce that
\[
\|u_2 \overline{u}_1\|_{L^1(Q)}\leq \cO(1).
\]
Thus, in view of the assumption that $A\in W^{1,\infty}(Q)$ and $q\in C(\overline{Q})$, we conclude that
\begin{equation}
\label{eq:est_lower_order}
h\left|\int_{Q}(|A^{(1)}|^2_g - |A^{(2)}|^2_g - (d^\ast A) - q) u_2 \overline{u}_1   dV_gdt \right|=\cO(h), \quad h\to 0.
\end{equation}

We next calculate that
\begin{equation}
\label{eq:computation_du2}
\begin{aligned}
h &\int_{Q}2 \n{A, d u_2}_g\overline{u_1}   dV_g dt
\\
= &
h\int_{Q}2 \n{A, d (c^{-(n-2)/4})}_g c^{-\frac{n-2}{4}} (\overline{v}_s w_s + \overline{v}_s r_2+ w_s\overline{r}_1 +\overline{r}_1 r_2 )   dV_g dt
\\
&+
h\int_{Q}2 \n{A, d \left({s^2\beta^2t+s x_1}\right)}_g c^{-\frac{n-2}{2}}  (\overline{v}_s w_s + \overline{v}_s r_2+ w_s\overline{r}_1 +\overline{r}_1 r_2 )    dV_g dt
\\
&+
h\int_{Q}2 \n{A, d (w_s+r_2)}_g c^{-\frac{n-2}{2}}   (\overline{v}_s + \overline{r}_1 )   dV_g dt
\\
:= &
I_1+I_2+I_3.
\end{aligned}
\end{equation}
Then it follows from the estimates \eqref{eq:estimate_v}, \eqref{eq:estimate_w}, and \eqref{eq:est_remainders} that
\begin{equation}
\label{eq:est_I1}
\left|I_1\right|= \cO\left(h\right), \quad h\to 0.
\end{equation}

We now move to investigate $I_2$. To this end, arguing similarly as above, we deduce that
\begin{equation}
\label{eq:est_real_part_I2}
\left| h\int_{Q} \frac{2}{h} \n{A, d {x_1}}_g c^{-(n-2)/2}   ( \overline{v}_s r_2+ w_s\overline{r}_1 +\overline{r}_1 r_2 )   dV_g dt\right|= o(h), \quad h \to 0.
\end{equation}
To analyze the term involving $\overline{v}_sw_s$, we first note that the identity $g=c(e \oplus g_0)$ gives us $\n{A,d{x_1}}_g= c^{-1}A_1$ and $dV_g = c^{\frac{n}{2}} dV_{g_0} dx_1$. Let us also continuously extend $A$ by zero outside $Q$ and denote the extension by the same letter. This is possible since $A^{(1)},A^{(2)}\in W^{1,\infty}(Q)$, and they agree on $\p Q$.  Therefore, using Fubini’s theorem, in conjunction with the limit  \eqref{eq:limit_prod_vw} and the
dominated convergence theorem, we obtain the following limit as $h \to 0$:

\begin{equation}
\label{eq:convergence_quasimode}
\begin{aligned}
&2h\int_{Q}\frac{1}{h} \n{A, d {x_1}}_g  c^{-(n-2)/2}    \overline{v}_s w_s   dV_{g} dt
\\
&\to 2 (1-\beta^2)^{-\frac{n-6}{4}}\int_{\mathbb{R}}\int_{\mathbb{R}}   \int_{0}^{\frac{L}{\sqrt{1-\beta^2}}} A_1(t,x_1,\gamma(\sqrt{1-\beta^2}r))
e^{\overline{\Phi_1}(t, x_1, r)+\Phi_2(t, x_1, r)}
\eta(t, x_1,r)   dr dx_1 dt,
\end{aligned}
\end{equation}
where the functions  $\Phi_1$ and $\Phi_2$ satisfy the following transport equations, respectively:
\begin{equation}
\label{eq:transport_eqn_Phi_1}
(\p_{x_1}-i\p_r) \Phi_{1}(t, x_1, r) = \overline A^{(1)}_1 (t,x_1,\sqrt{1-\beta^2} r, 0) -i  \sqrt{1 - \beta^2} \overline A^{(1)}_\tau (t, x_1, \sqrt{1-\beta^2}r, 0),
\end{equation}
and
\begin{equation}
\label{eq:transport_eqn_Phi_2}
(\p_{x_1}+i\p_r) \Phi_{2}(t, x_1, r)=-A^{(2)}_1 (t,x_1, \sqrt{1-\beta^2}r, 0) - i \sqrt{1 - \beta^2} A^{(2)}_\tau (t, x_1,\sqrt{1-\beta^2} r, 0).
\end{equation}

Finally, we analyze $I_3$ as $h \to 0$. By the estimates \eqref{eq:estimate_v}, \eqref{eq:estimate_w}, and \eqref{eq:est_remainders}, we deduce that
\begin{equation}
\label{eq:est_I3_part1}
\left|h\int_{Q}2 \left(\n{A, d w_s}_g\overline{r}_1 + \n{A, d r_2}_g(\overline{v}_s +\overline{r}_1 )\right)  c^{-(n-2)/2}      dV_g dt \right|= o(h), \quad h\to 0.
\end{equation}
On the other hand, by setting $\tilde{g}= e\oplus g_0$, we see that $\n{A,dw_s}_g= c^{-1}\n{A,dw_s}_{\tilde{g}}$.
Combining this identity with the fact that $A=0$ outside $Q$, Fubini’s theorem, the concentration property \eqref{eq:concentration_one_form_ws}, and the
dominated convergence theorem, we have
\begin{equation}
\label{eq:limit_I3}
\begin{aligned}
&2h\int_{Q} \n{A, d w_s}_g c^{-(n-2)/2}   \overline{v}_s   dV_g dt \\
&  \to 2i (1-\beta^2)^{-\frac{n-8}{4}}\int_{\mathbb{R}}\int_{\mathbb{R}}   \int_{0}^{\frac{L}{\sqrt{1-\beta^2}}}  A_\tau (\dot \gamma(\sqrt{1-\beta^2}r))  e^{\overline{\Phi_1}(t, x_1, r)+\Phi_2(t, x_1, r)}
\eta(t, x_1,r)   dr dx_1 dt
\end{aligned}
\end{equation}
in the limit $h\to 0$, where $\Phi_1$ and $\Phi_2$ satisfy the transport equations \eqref{eq:transport_eqn_Phi_1} and \eqref{eq:transport_eqn_Phi_2}, respectively. Since $\beta \in (\frac{1}{\sqrt{3}},1)$, we conclude from    \eqref{eq:est_lower_order}--\eqref{eq:convergence_quasimode}, \eqref{eq:est_I3_part1}, and \eqref{eq:limit_I3} that
\begin{equation}
\label{eq:identity_after_CGO}
\begin{aligned}
&   \int_{\mathbb{R}}\int_{\mathbb{R}}   \int_{0}^{\frac{L}{\sqrt{1-\beta^2}}} \left(A_1(t,x_1,\gamma(\sqrt{1-\beta^2}r))+i\sqrt{1-\beta^2}A_\tau(t,x_1,\gamma(\sqrt{1-\beta^2}r) \right)
\\
& \qquad \qquad \qquad    \times
e^{\overline \Phi_1(t, x_1, r)+\Phi_2(t, x_1, r)}\eta(t, x_1,r)   dr dx_1 dt=0.
\end{aligned}
\end{equation}

By taking the conjugate of \eqref{eq:transport_eqn_Phi_1} and add it to \eqref{eq:transport_eqn_Phi_2}, we get
\begin{equation}
\label{difference_phi_equation}
\overline \partial\left(\overline \Phi_{1}+\Phi_{2}\right) = A_1 (t,x_1,\sqrt{1-\beta^2} r, 0) +i  \sqrt{1 - \beta^2}  A_\tau  (t, x_1, \sqrt{1-\beta^2}r, 0),
\end{equation}
where $\overline{\p} = \p_{x_1}+i\p_r$.
Thus, \eqref{eq:identity_after_CGO} and \eqref{difference_phi_equation} imply that
\[
\int_{\mathbb{R}}\int_{\mathbb{R}} \int_{0}^{\frac{L}{\sqrt{1-\beta^2}}} \overline \partial\left(\overline \Phi_{1}+\Phi_{2}\right) e^{\overline \Phi_1(t, x_1, r)+\Phi_2(t, x_1, r)}\eta(t, x_1,r)   dr dx_1 dt=0.
\]
Let us recall that $\overline{\p} \eta =0$. Hence, the product rule yields that
\[
\int_{\mathbb{R}}\int_{\mathbb{R}} \int_{0}^{\frac{L}{\sqrt{1-\beta^2}}} \overline \partial\left(e^{\overline \Phi_1(t, x_1, r)+\Phi_2(t, x_1, r)} \eta(t, x_1,r)\right)    dr dx_1 dt=0.
\]

We now choose $\eta(t,x_1,r) =  e^{-i \sqrt{1-\beta^2}\mu(x_1+ir)} \eta_2(t)$, where $\mu \in \mathbb{R}$ is   fixed, and the function $\eta_2 \in C_0^\infty(\R)$ is arbitrary. Then it follows immediately that  $\overline{\p} e^{-i \sqrt{1-\beta^2}\mu(x_1+ir)}=0$ and $\overline{\p} \eta=0$. Following the arguments in  \cite{Krupchyk_Uhlmann_magschr,Liu_Saksala_Yan, Mishra_Purohit_Vashisth}, which utilize the compact support of $A$ in the $x_1$-variable and  Stokes' theorem,  we remove the factor $e^{\overline{\Phi}_1+\Phi_2}$  to obtain
\begin{align*}
&\int_{\mathbb{R}}\int_{\mathbb{R}} \int_{0}^{\frac{L}{\sqrt{1-\beta^2}}} e^{-i \sqrt{1-\beta^2}\mu(x_1+ir)} \left(A_1 (t,x_1,\sqrt{1-\beta^2} r, 0) +i  \sqrt{1 - \beta^2}  A_\tau (t, x_1, \sqrt{1-\beta^2}r, 0)\right)
\\
& \qquad \qquad \qquad    \times  \eta_2(t)   dr dx_1 dt=0.
\end{align*}
Since $\eta_2\in C^\infty(\mathbb{R})$ is arbitrary, we have for almost every $t\in \mathbb{R}$ that
\[
\int_{\mathbb{R}} \int_{0}^{\frac{L}{\sqrt{1-\beta^2}}} e^{-i\sqrt{1-\beta^2}\mu(x_1+ir)}\left(A_1 (t,x_1,\sqrt{1-\beta^2} r, 0) +i  \sqrt{1 - \beta^2}  A_\tau (t, x_1, \sqrt{1-\beta^2}r, 0)\right)    dr dx_1 =0.
\]
We now recall that $r= \frac{1}{\sqrt{1-\beta^2}}\tau$ and rewrite the previous identity as
\[
\int_{0}^{L} e^{\mu  \tau}\left[f(\gamma(\tau))+i\sqrt{1-\beta^{2}} F(\dot{\gamma}(\tau))\right] d\tau = 0,
\]
where
\begin{equation}
\label{eq:def_f_and_F}
\begin{aligned}
&f(t, x', \mu)= \int_\R   e^{-i\sqrt{1-\beta^2}\mu x_{1}}  A_1(t, x_1, x')  dx_1,
\\
&F(x', \mu) = \sum_{j=2}^{n}\left(\int_\R   e^{-i\sqrt{1-\beta^2}\mu x_{1}} A_j(t,x_1,x') dx_1\right) dx^j.
\end{aligned}
\end{equation}
In what follows, we denote
\[
F_j (x', \mu) := \int_\R   e^{-i\sqrt{1-\beta^2}\mu x_{1}}  A_j(x_1,x') dx_1.
\]
Since $A\in W^{1,\infty}(Q)$, it holds that  $f (\cdot, \mu)\in W^{1,\infty}(M_0)$ and $F(\cdot, \mu)\in W^{1,\infty} (M_0, T^\ast M_0)$.

We are now ready to recover $dA$ in $Q$. To this end, we choose  $\mu\in \R$ such that $|\mu| \le \varepsilon$ and apply Assumption \ref{asu:inj} to the pair $(f, i\sqrt{1-\beta^2}F)$ with constant attenuation $\alpha =\mu$ to obtain
\begin{equation}
\label{eq:injectivity_conclusion}
f = \mu p \quad \text{ and } \quad F = -  \frac{i}{\sqrt{1-\beta^2}} d_{x'} p
\end{equation}
for some function $p \in W^{2,\infty} (M_0)$ such that $p|_{\partial M_0}=0$. Thus, we may express $F$ as
\[
F(x', \mu) = \sum_{j=2}^{n}\mathcal{F}_{x_1} A_j(t,\cdot,x') (\sqrt{1-\beta^2}\mu)   dx^j =   \sum_{j=2}^{n} -  \frac{i}{\sqrt{1-\beta^2}}  \left(\partial_j p \right)  dx^j,
\]
where $ \mathcal{F}_{x_1}(\cdot)$ denotes the Fourier transform with respect to $x_1$.

By a direct computation, we have
$\cF_{x_1} [\p_k A_j-\p_j A_k](\sqrt{1-\beta^2}\mu, \cdot) =0$, $j,k=2,\dots, n$,
for all sufficiently small $|\mu|$. Since $A$ is compactly supported in
the $x_1$-variable, the Paley-Weiner theorem implies that  $\cF_{x_1} [\p_k A_j-\p_j A_k](\sqrt{1-\beta^2}\mu, \cdot)$ extends to an entire function of $\mu$. Therefore, we get
\[
\cF_{x_1} [\p_k A_j-\p_j A_k](\sqrt{1-\beta^2}\mu, \cdot) =0, \quad \mu \in \R.
\]
Due to the injectivity of the Fourier transform, it holds that
\begin{equation}
\label{eq:dA_vanish_jge2}
\partial_k A_j-\partial_j A_k =0, \quad j,k = 2,\dots,n.
\end{equation}
We next compute $\cF_{x_1} \left[\partial_j A_1-\partial_1 A_j\right](\sqrt{1-\beta^2}\mu, \cdot)$.
By the definition of $f$ and \eqref{eq:injectivity_conclusion}, we have for all $2 \leq j \leq n$ that
\[
\mu\partial_j p = \partial_jf =\int_\R  e^{-i\sqrt{1-\beta^2}\mu x_{1}}\partial_j A_1(x_1,x')  dx_1,
\]
On the other hand, as $F_j$ is independent of $x_1$, we get from the product rule that
\[
0=  \int_\R   \p_1 \left(e^{-i\sqrt{1-\beta^2}\mu x_{1}}  A_j(x_1,x') \right)  dx_1 =  -i\sqrt{1-\beta^2}\mu F_j+\int_\R  e^{-i\sqrt{1-\beta^2} \mu x_{1}} \partial_1 A_j(x_1,x')   dx_1.
\]
Using the identity $i\mu \sqrt{1-\beta^2} F_j = \mu\partial_j p$, which follows immediately from \eqref{eq:injectivity_conclusion}, we get from the previous two equations that
\begin{align*}
\cF_{x_1} \left[\partial_j A_1-\partial_1 A_j\right](\sqrt{1-\beta^2}\mu, \cdot)=0.
\end{align*}
Arguing similarly as above, we have
\begin{equation}
\label{eq:dA_vanish_x1}
\partial_j A_1-\partial_1 A_j =0.
\end{equation}
From here, we combine \eqref{eq:dA_vanish_jge2} and \eqref{eq:dA_vanish_x1} to conclude that $dA=0$ in $Q$.

We next follow the arguments from \cite[Section 5]{Feizmohammadi_et_all_2019} to complete the recovery of $A(t,x)$ up to the gauge described in \eqref{eq:gauge}, see also  \cite[Section 2]{Selim_Yan} and \cite[Section 4]{Yan}. Let
\begin{equation}
\label{eq:def_Phi}
\Psi(t,x_1,x') = \int_{-a}^{x_1} A_1 (t,y_1,x')  dy_1,
\end{equation}
where $\supp A_1(t,\cdot, x') \subset (-a,a)$. Using \eqref{eq:def_f_and_F}, we have at $\mu=0$ that
\[
0= f(t,x',0) = \int_\R A_1(t,x_1, x')  dx_1,
\]
which implies that $\Psi$ has compact support in the $x_1$-variable. Hence, the Fourier transform of $\Psi$ with respect to the $x_1$ variable, which we denote as $\cF_{x_1}\Psi(t,x', \xi)$, extends to an entire function of $\xi$. Since $\p_{x_1} \Psi = A_1$, we utilize \eqref{eq:injectivity_conclusion} to deduce that
\[
i \sqrt{1-\beta^2}\mu \cF_{x_1}\Psi(t,x', \sqrt{1-\beta^2}\mu) =f(t,x', \mu) = \mu p(t,x').
\]
Therefore, we have for all $\mu \ne 0$ that
\[
\cF_{x_1}\Psi(t,x', \sqrt{1-\beta^2}\mu) = -\frac{i}{\sqrt{1-\beta^2}} p(t,x').
\]
Applying \eqref{eq:def_f_and_F} and \eqref{eq:injectivity_conclusion} again,   the identity
\[
d_{x'} \cF_{x_1} \Psi(t,x', \sqrt{1-\beta^2}\mu)
= -\frac{i}{\sqrt{1-\beta^2}} d_{x'} p(t,x') = F(t, x', \mu)
= \sum_{j=2}^n \cF_{x_1} A_j (t,\sqrt{1-\beta^2}\mu, x')  dx^j.
\]
holds for all sufficiently small $\mu \ne 0$. By the analyticity of the Fourier transform, the equation above holds for all $\mu \in \R$. Applying the inverse Fourier transform with respect to the $x_1$-variable, we obtain
\[
d_{x'} \Psi = \sum_{j=2}^n A_jdx^j.
\]
Moreover, as  $\p_{x_1}\Psi = A_1$, we conclude that $d_{x} \Psi = A$.

Since $\p M$ is connected and $d \Psi = A =0$ on $\p M$, it holds that $\Psi(t,\cdot)$ is a constant on $\p M$ for almost every $t\in(0,T)$. Thus, we may modify $\Psi$ by a constant and assume that $\Psi =0$ on $\Sigma$. Moreover, since $A^{(1)}=A^{(2)}$ on $\p Q$, we have $d\Psi(0,\cdot) = d \Psi(T,\cdot)=0$. Hence, $\Psi(0,\cdot)$ and $ \Psi(T,\cdot)$ are both constant in $M$. Furthermore, their traces on $\p M$ vanish, we conclude that $\Psi(0,\cdot)=  \Psi(T,\cdot)=0$, giving us $\Psi|_{\p Q}=0$. This completes the proof of the recovery of $A(t,x)$ up to the gauge invariance described in \eqref{eq:gauge}.

\subsection{Recovery of the density coefficient}
\label{subsec:density_recovery}

We now establish the uniqueness of the density coefficient $q(t,x)$ up to the natural gauge. To that end, we define $A^{(3)}  := A^{(1)} - d\Psi$ and $q_3  = q_1  - \partial_t\Psi$. Let us recall that we have established $A^{(2)}=A^{(1)} - \nabla_g\Psi$ in Subsection \ref{subsec:proof_A}. Thus, it follows immediately that $A^{(3)} = A^{(2)}$ in $Q$. Furthermore, by  \cite[Proposition 1.3]{Mishra_Purohit_Vashisth} and the hypothesis $\Lambda_{A^{(1)},q_1}=  \Lambda_{A^{(2)},q_2}$,  we get $\Lambda_{A^{(3)},q_3} = \Lambda_{A^{(2)},q_2}$. Then the integral identity \eqref{eq:int_identity} becomes
\begin{equation}
\label{eq:int_id_density}
\int_{Q} (q_2 - q_3) u_2  {\overline u}_1 dV_g  dt = -\int_{\Sigma \setminus V} \overline u_1 \p_\nu u dS_gdt.
\end{equation}
In view of \eqref{eq;est_rhs_integral}, we obtain
\[
\int_{\Sigma \setminus V} \overline u_1 \p_\nu u dS_gdt = o(1), \quad h\to 0.
\]
Therefore, it follows that
\[
\int_{Q} (q_2 - q_3) u_2  {\overline u}_1 dV_g  dt
\to 0, \quad h\to 0.
\]

We next substitute the CGO solutions $u_1$ and $u_2$ given by \eqref{eq:CGO_decreasing_with_conformal} and \eqref{eq:CGO_increasing_with_conformal} into \eqref{eq:int_id_density}.  Using the Cauchy-Schwarz inequality,  together with the estimates \eqref{eq:estimate_v}, \eqref{eq:estimate_w}, and \eqref{eq:est_remainders}, we get
\[
\int_{Q} (q_2 - q_3) c^{-(n-2)/2} w_s\overline v_s dV_{g}dt \to 0, \quad h\to 0.
\]
Arguing similarly as in Subsection \ref{subsec:proof_A}, we have
\[
\int_{\mathbb{R}}\int_{\mathbb{R}}   \int_{0}^{\frac{L}{\sqrt{1-\beta^2}}} (q_2-q_3)(t,x_1,\gamma(\sqrt{1-\beta^2}r))  e^{\overline{\Phi_1}(t, x_1, r)+\Phi_2(t, x_1, r)}
\eta(t, x_1,r)dr dx_1 dt =0.
\]

By proceeding similarly to the argument from \eqref{eq:identity_after_CGO} onward, we conclude that $q_2 =  q_3$ in $Q$. Since $q_3 =  q_{1} -  \partial_t \Psi$, we obtain $q_{1}  - q_{2} = \partial_t\Psi $ in $Q$. This completes the proof of Theorem \ref{thm:uniqueness}.

\bibliographystyle{abbrv}
\bibliography{bib_convection_diffusion}

\begin{thebibliography}{10}

\bibitem{Avdonin_Seidman}
S.~Avdonin and T.~Seidman.
\newblock Identification of $q(x)$ in $u_t=\delta u - qu$ from boundary observations.
\newblock {\em SIAM Journal on Control and Optimization}, 33(4):1247--1255, 1995.

\bibitem{Babich_Buldyrev}
V.~Babich and V.~Buldyrev.
\newblock {\em Short-Wavelength Diffraction Theory: Asymototpic Methods.}, volume~4 of {\em Springer Series on Wave Phenomena}.
\newblock Springer, Berlin, 1991.

\bibitem{Babich_Ulin}
V.~Babich and V.~Ulin.
\newblock The complex space-time ray method and quasi-photons.
\newblock {\em Mathematical problems in the theory of wave propagation, 12}, 117:5--12, 1981.

\bibitem{Belishev_Katchalov}
M.~Belishev and A.~Katchalov.
\newblock Boundary control and quasiphotons in a {R}iemannian manifold reconstruction problem via dynamical data (in {R}ussian).
\newblock {\em Zap. Nauchn. Sem. Leningrad. Otdel. Mat. Inst. Steklov.}, 203:21--50, 1992.

\bibitem{Bellassoued_Fraj}
M.~Bellassoued and O.~{B. Fraj}.
\newblock Stability estimates for time-dependent coefficients appearing in the magnetic {S}chr{\"o}dinger equation from arbitrary boundary measurements.
\newblock {\em Inverse Problems and Imaging}, 2020.

\bibitem{Bellassoued_Rassas}
M.~Bellassoued and I.~Rassas.
\newblock Stability estimate for an inverse problem of the convection-diffusion equation.
\newblock {\em Journal of Inverse and Ill-posed Problems}, 28(1):71--92, 2020.

\bibitem{Canuto_Kavian}
B.~Canuto and O.~Kavian.
\newblock Determining coefficients in a class of heat equations via boundary measurements.
\newblock {\em SIAM Journal on Mathematical Analysis}, 32(5):963--986, 2001.

\bibitem{Caro_Kian}
P.~Caro and Y.~Kian.
\newblock Determination of convection terms and quasilinearities appearing in diffusion equations.
\newblock {\em preprint, Arxiv: 1812.08495}, 2018.

\bibitem{Cekic}
M.~Ceki\'c.
\newblock The {C}alder\'on problem for connections.
\newblock {\em Communications in Partial Differential Equations}, 42(11):1781--1836, 2017.

\bibitem{Cheng_Yamamoto}
J.~Cheng and M.~Yamamoto.
\newblock Identification of convection term in a parabolic equation with a single measurement.
\newblock {\em Nonlinear Analysis}, 50(2):163--171, 2002.

\bibitem{Choulli_book}
M.~Choulli.
\newblock {\em Une introduction aux probl{\`e}mes inverses elliptiques et paraboliques}, volume~65 of {\em Math{\'e}matiques et Applications}.
\newblock Springer, Berlin, 2009.

\bibitem{Choulli_Kian}
M.~Choulli and Y.~Kian.
\newblock Logarithmic stability in determining the time-dependent zero order coefficient in a parabolic equation from a partial {D}irichlet-to-{N}eumann map. {A}pplication to the determination of a nonlinear term.
\newblock {\em Journal de Math{\'e}matiques Pures et Appliqu{\'e}es}, 114(235-261), 2018.

\bibitem{deHoop_Ilmavirta}
M.~{de Hoop} and J.~Ilmavirta.
\newblock Abel transforms with low regularity with applications to x-ray tomography on spherically symmetric manifolds.
\newblock {\em Inverse Problems}, 33:124033, 2017.

\bibitem{Deng_Yu_Yang}
Z.~C. Deng, J.~N. Yu, and L.~Yang.
\newblock Identifying the coefficient of first-order in parabolic equation from final measurement data.
\newblock {\em Mathematics and Computers in Simulation}, 77(4):421--435, 2008.

\bibitem{Ferreira_Kenig_Salo_Uhlmann}
D.~{Dos Santos Ferreira}, C.~Kenig, M.~Salo, and G.~Uhlmann.
\newblock Limiting {C}arleman weights and anisotropic inverse problems.
\newblock {\em Inventiones {M}athematicae}, 178:119--171, 2009.

\bibitem{DDS_Kenig_Sjo_Uhl}
D.~{Dos Santos Ferreira}, C.~Kenig, J.~Sj\"ostrand, and G.~Uhlmann.
\newblock Determining a magnetic {S}chr\"odinger operator from partial {C}auchy data.
\newblock {\em Communications in Mathematical Physics}, 271(2):467--488, 2007.

\bibitem{Ferreira_Kur_Las_Salo}
D.~{Dos Santos Ferreira}, Y.~Kurylev, M.~Lassas, and M.~Salo.
\newblock The {C}alder\'on problem in transversally anisotropic geometries.
\newblock {\em Journal of European Mathematical Society}, 18(11):2579--2626, 2016.

\bibitem{Evans}
L.~C. Evans.
\newblock {\em Partial Differential Equations}, volume~19 of {\em Graduate Studies in Mathematics}.
\newblock American Mathematical Society, Providence, RI, 2010.

\bibitem{Feizmohammadi_heat}
A.~Feizmohammadi.
\newblock An inverse boundary value problem for isotropic nonautonomous heat flows.
\newblock {\em Mathematische Annalen}, 388:1569--1607, 2024.

\bibitem{Feizmohammadi_et_all_2019}
A.~Feizmohammadi, J.~Ilmavirta, Y.~Kian, and L.~Oksanen.
\newblock Recovery of time dependent coefficients from boundary data for hyperbolic equations.
\newblock {\em Journal of Spectral Theory}, 11(3):1107--1143, 2021.

\bibitem{Friedlander_Joshi}
G.~Friedlander and M.~Joshi.
\newblock {\em Introduction to the Theory of Distributions}.
\newblock Cambridge University Press, Cambridge, 1998.

\bibitem{Gaitan_Kian}
P.~Gaitan and Y.~Kian.
\newblock A stability result for a time-dependent potential in a cylindrical domain.
\newblock {\em Inverse Problems}, 29(6):065006, 2013.

\bibitem{Isakov_91}
V.~Isakov.
\newblock Completeness of products of solutions and some inverse problems for pde.
\newblock {\em Journal of Differential Equations}, 92(2):305--316, 1991.

\bibitem{Katchalov_Kurylev}
A.~Katchalov and Y.~Kurylev.
\newblock Multidimensional inverse problem with incomplete boundary spectral data.
\newblock {\em Communications in Partial Differential Equations}, 23(1-2):27--59, 1998.

\bibitem{KKL_book}
A.~Katchalov, Y.~Kurylev, and M.~Lassas.
\newblock {\em Inverse Boundary Spectral Problems}, volume 123 of {\em Monographs and Surveys in Pure and Applied Math}.
\newblock Chapman \& Hall/CRC, 2001.

\bibitem{Kenig_Salo}
C.~Kenig and M.~Salo.
\newblock The {C}alder\'on problem with partial data on manifolds and applications.
\newblock {\em Analysis \& PDE}, 6(8):2003--2048, 2013.

\bibitem{Krupchyk_Uhlmann_magschr}
K.~Krupchyk and G.~Uhlmann.
\newblock Inverse problems for magnetic {S}chr{\"o}dinger operators in transversally anisotropic geometries.
\newblock {\em Communications in Mathematical Physics}, 361:525--582, 2018.

\bibitem{Kumar_Purohit}
P.~Kumar and A.~Purohit.
\newblock Inverse boundary value problem for the convection--diffusion equation with local data.
\newblock {\em Applicable Analysis}, 104(11):2195--2204, 2025.

\bibitem{lee2012smooth}
J.~Lee.
\newblock {\em Introduction to Smooth Manifolds}, volume 218 of {\em Graduate Texts in Mathematics}.
\newblock Springer, New York, 2nd edition, 2012.

\bibitem{lionsNonHomogeneousBoundaryValue1972}
J.~L. Lions and M.~Magenes.
\newblock {\em Non-homogeneous Boundary Value Problems and Applications}, volume~I.
\newblock Springer Berlin Heidelberg, Berlin, Heidelberg, 1972.

\bibitem{Liu_Saksala_Yan}
B.~Liu, T.~Saksala, and L.~Yan.
\newblock Partial data inverse problem for hyperbolic equation with time-dependent damping coefficient and potential.
\newblock {\em SIAM Journal on Mathematical Analysis}, 56(4):5678--5722, 2024.

\bibitem{Liu_Saksala_Yan_potential}
B.~Liu, T.~Saksala, and L.~Yan.
\newblock Recovery of a time-dependent potential in hyperbolic equations on conformally transversally anisotropic manifolds.
\newblock {\em Journal of Spectral Theory}, 15(1):123--147, 2025.

\bibitem{Mishra_Purohit_Vashisth}
R.~Mishra, A.~Purohit, and M.~Vashisth.
\newblock Inverse problem for a time-dependent convection-diffusion equation in admissible geometries.
\newblock {\em Research in the Mathematical Sciences}, 12(4):article number 75, 2025.

\bibitem{paternain2019geodesic}
G.~Paternain, M.~Salo, G.~Uhlmann, and H.~Zhou.
\newblock The geodesic {X}-ray transform with matrix weights.
\newblock {\em American Journal of Mathematics}, 141(6):1707--1750, 2019.

\bibitem{Purohit}
A.~Purohit.
\newblock Determining time-dependent convection and density terms in a convection-diffusion equation using partial data.
\newblock {\em Communications on Analysis and Computation}, 3:69--90, 2025.

\bibitem{Ralston_1977}
J.~Ralston.
\newblock Approximate eigenfunctions of the {L}aplacian.
\newblock {\em Journal of Differential Geometry}, 12(1):87--100, 1977.

\bibitem{Ralston_1982}
J.~Ralston.
\newblock Gaussian beams and the propagation of singularity, studies in partial differential equations.
\newblock {\em MAA Studies in Mathematics}, 23:206--248, 1982.

\bibitem{Sahoo_Vashith}
S.~K. Sahoo and M.~Vashisth.
\newblock A partial data inverse problem for the convection-diffusion equation.
\newblock {\em Inverse Problems and Imaging}, 14(1):53--75, 2020.

\bibitem{Selim_Yan}
S.~Selim and L.~Yan.
\newblock Partial data inverse problems for magnetic {S}chr{\"o}dinger operators on conformally transversally anisotropic manifolds.
\newblock {\em Asymptotic Analysis}, 140:25--36, 2024.

\bibitem{Senapati_Vashisth}
S.~Senapati and M.~Vashisth.
\newblock Stability estimate for a partial data inverse problem for the convection-diffusion equation.
\newblock {\em Evolution Equations and Control Theory}, 11(5):1681--1699, 2022.

\bibitem{Stocker}
T.~Stocker.
\newblock {\em Introduction to Climate Modelling}.
\newblock Advances in Geophysical and Environmental Mechanics and Mathematics. Springer, Berlin, Heidelberg, 2011.

\bibitem{Yan}
L.~Yan.
\newblock Inverse boundary problems for biharmonic operators in transversally anisotropic geometries.
\newblock {\em SIAM Journal on Mathematical Analysis}, 53(6):6617--6653, 2021.

\end{thebibliography}
\end{document}